\numberwithin{equation}{section}
\newtheorem{theorem}{Theorem}[section]
\newtheorem{lemma}[theorem]{Lemma}
\newtheorem{proposition}[theorem]{Proposition}
\newtheorem{corollary}[theorem]{Corollary}
\theoremstyle{remark}
\newtheorem{remark}[theorem]{Remark}
\theoremstyle{definition}
\newtheorem{definition}[theorem]{Definition}
\Crefname{assumption}{Assumption}{Assumptions}
\Crefname{property}{Property}{Properties}
  \theoremstyle{remark}
\newcommand{\eqdef}{\stackrel{\text{def}}{=}}
\newcommand{\R}{\mathbb{R}}
\newcommand{\T}{\mathbb{T}}
\newcommand{\N}{\mathbb{N}}
\newcommand{\Z}{\mathbb{Z}}
\renewcommand{\P}{{\mathbb P}}
\newcommand{\ds}{\displaystyle}
\newcommand{\eps}{\varepsilon}
\DeclareMathAlphabet{\mathup}{OT1}{\familydefault}{m}{n}
\newcommand{\E}{\mathbb{E}}
\begin{document}
\begin{frontmatter}
\title{Mass generation for the two dimensional O(N) Linear Sigma Model in the large N limit} 

%\title{A sample article title with some additional note\thanksref{t1}}
\runtitle{}
%\thankstext{T1}{A sample additional note to the title.}

\begin{aug}

\author[A]{\fnms{Mat\'ias G.} \snm{Delgadino}\ead[label=e1]{matias.delgadino@math.utexas.edu}}
\and
\author[D]{\fnms{Scott A.} \snm{Smith}\ead[label=e4]{ssmith@amss.ac.cn}}

\address[A]{Department of Mathematics, The University of Texas at Austin
\printead{e1}}

\address[D]{Academy of Mathematics and Systems Sciences, Chinese Academy of Sciences
\printead{e4}}
\end{aug}

\begin{abstract}
This work studies the $O(N)$ Linear Sigma Model on $\R^{2}$ under a scaling dictated by the formal $1/N$ expansion.  We show that in the large $N$ limit, correlations decay exponentially fast, where the acquired mass decays exponentially in
the inverse temperature.  In fact, each marginal converges to a massive Gaussian Free Field (GFF) on $\R^{2}$, quantified in the $2$-Wasserstein distance with a weighted $H^{1}(\R^{2})$ cost function.  In contrast to prior work on the torus via parabolic stochastic quantization, our results hold without restrictions on the coupling constants, allowing us to also obtain a massive GFF in a suitable double scaling limit.  Our proof combines the Feyel/\"Ust\"unel extension of Talagrand's inequality with some classical tools in Euclidean Quantum Field Theory.
\end{abstract}

%\begin{keyword}[class=MSC2020]
%\kwd[Primary ]{60K35}
%\kwd{82B26}
%\kwd[; secondary ]{39B62}
%\end{keyword}

%\begin{keyword}
%\kwd{Interacting particle systems}
%\kwd{log-Sobolev inequality}
%\kwd{propagation of chaos}
%\end{keyword}

\end{frontmatter}

\tableofcontents

%%%%%%%%%%%%%%%%%%%%%%%%%%%%%%%%%%%%%%%%%%%%%%%%%%%%%%%%%%%%%%%%%%%%%%%%%%%%%%%%%%%%%%%%%%%%%%%%%%%%%%%%%%%%%%%%%%%%%%%%%%%%%%%%%%%%%%%%%%%%%%%%%%%%%%%%%%%%%%%%%%%%%%%%%%%%%%%%%%%%%%%%%%%%%%%%%%%%%%%%%%%%%%%%%%%%%%%%%%%%%%%%%%%%%%%%%%%%%%%%%%%%%%%%%%%%%%%%%%%%%%%%%%%%%%%%%%%%%%%%%%%%%%%%%%%%%%%%%%%%%%%%%%%%%%%%%%%%%%%%%%%%%%%%%%%%%%%%%%%%%%%%%%%%%%%%%%%%%%%%%%%%%%%%%%%%%%%%%%%%%%%%%%%%%%%%%%%%%%%%%%%%%%%%%%%%%%%%%%%%%%%%%%%%%%%%%%%%%%

\section{Introduction}\label{sec:Intro}
Formally, given an integer $N\geq2$, the $O(N)$ Non-Linear Sigma Model on $\R^{2}$ is a probability measure over vector-valued distributions $\mathcal{D}'(\R^{2})^{N}$ with density
\begin{equation}
\frac{1}{\mathbf{Z} } \exp \bigg (- \frac{\beta}{2}\int_{\R^{2}} \|\nabla \Phi(x)\|_{\R^{N \times 2}}^{2}dx \bigg ) \prod_{x \in \R^{2}} \delta_{\mathbf{S}^{N-1}}(\Phi(x))d \Phi(x)
%\delta_{\{\|\Phi(x)\|^{2}_{\R^{N}}=N\}}d \Phi(x) 
\label{e100},
\end{equation}
where $\beta>0$ denotes the inverse temperature and 
$\mathbf{S}^{N-1}$ is the $N-1$ dimensional unit sphere. Despite being a fundamental model in the physics literature on Euclidean Quantum Field Theory (EQFT) c.f \cite{polyakov2018gauge,brezin1976spontaneous}, its rigorous mathematical construction remains a long-standing open problem in mathematical physics.  %\footnote{In the language of QFT, the model is said to be renormalizable and conjectured to be asymptotically free.  In the language of singular SPDEs, the corresponding formal Langevin dynamic is a critical equation.}
A major difficulty is that the reference measure
\begin{equation}
\prod_{x \in \R^{2}} \delta_{ \mathbf{S}^{N-1}}(\Phi(x))d \Phi(x)
\end{equation}
lacks a clear definition.  Indeed, an uncountable product of the Lebesgue measure is ill-defined, the typical difficulty in EQFT, but more significantly it is not clear how to interpret the constraint dictated by $\delta_{\mathbf{S}^{N-1}}(\Phi(x))$ given that we expect the measure to be supported on $\Phi \in \mathcal{D}'(\R^{2})^{N}$ which can not be defined pointwise.  One way to circumvent this obstruction is via lattice approximation of $\R^{2}$, where a countable number of spins are sampled from independent copies of the uniform distribution on $\mathbf{S}^{N-1}$ exponentially weighted by a discretized version of the $H^{1}(\R^{2})$ semi-norm, this is known as the spin $O(N)$-model, c.f. Stanley \cite{PhysRevLett.20.589}.  This model has been studied extensively in the probability and statistical mechanics literature, but despite much progress, including a classical work of Gawedzki/Kupiainen \cite{gawedzki1986continuum} on the hierarchical approximation, the continuum limit as the lattice spacing tends to zero has never been constructed.  Note 
that the case $N=1$ is better understood, and we refer the reader to the works \cite{camia2015planar,camia2016planar} 
regarding the continuum limit of the 2d Ising model at the critical temperature. %{\color{red} More is known about the continuum limit of the Ising model, $N=1$, but I'm not sure I'm qualified to fully survey it, so I assume $N \geq 2$ above.  Slightly awkward since $N=1$ is the most well studied form of the Linear Sigma Model}

An alternative way to regularize \eqref{e100}, which leads to a continuum approximation, is based on softening the restriction of $\Phi(x)$ to $\mathbf{S}^{N-1}$ via Gaussian approximation of the delta measure.  For convenience, we first make the change of variables $\sqrt{\beta} \Phi \mapsto \Phi$, which normalizes the constant multiplying the $H^{1}(\R^{2})$ semi-norm in \eqref{e100} and changes the radius of the sphere to $\sqrt{ \beta}$ in the constraint.  Using the approximation
\begin{eqnarray}
    \delta_{\sqrt{ \beta}\mathbf{S}^{N-1}}(\Phi(x)) \approx \sqrt{\frac{\lambda}{8 \pi } } \text{exp} \bigg (-\frac{\lambda}{4 }(\|\Phi(x)\|_{\R^{N}}^{2}- \beta )^{2} \bigg ),
\end{eqnarray}
and approximating $\R^{2}$ by a periodic torus $\Lambda_{L}$ of volume $L^{2}$, one can formally obtain the Non-Linear Sigma Model as $\lambda, L \to \infty$, in the (still formal) density
\begin{equation}
%\frac{1}{\mathbf{Z^{N}_{L}} } 
d\nu_{L} \varpropto\exp 
\bigg (- \int_{\Lambda_{L} }  \bigg (\frac{1}{2}\|\nabla \Phi(x)\|_{\R^{N \times 2}}^{2}+\frac{\lambda}{4}  \big ( \|\Phi(x)\|_{\R^{N}}^{2}- \beta \big )^{2} \bigg )dx \bigg ) \prod_{x \in \Lambda_{L} }d \Phi(x) \label{e101},
\end{equation}
which corresponds to the \textit{Linear Sigma Model} on $\Lambda_{L}$, the main probability measure of interest in this manuscript.  

The Linear Sigma Model is perhaps best known in the scalar setting $N=1$, which is usually referred to as the $\Phi^{4}_{2}$ model.  The model on $\Lambda_{L}$, which can be defined rigorously as a probability measure on $\mathcal{D}'(\Lambda_{L})^{N}$ and denoted in this text by $\nu_{L}$, can be constructed after a suitable Wick renormalization, using the classical approach of Nelson \cite{nelson1966quartic}, as exposed in the textbook \cite{simon2015p} of Simon.  The connection between the rigorous definition of the probability measure $\nu_{L}$ and the formal action \eqref{e101} is particularly transparent upon approximation of $\Lambda_{L}$ by a finite lattice $\Lambda_{L,\epsilon}$  with spacing $\epsilon$ such that $L \epsilon^{-1} \in \N$.  The measure $\nu_{L,\epsilon}$ on $(\R^{N})^{\Lambda_{L,\epsilon}}$ is rigorously defined using the reference product Lebesgue measure $\prod_{x \in \Lambda_{L,\epsilon}} d \Phi(x)$ with density proportional to
\begin{equation}
\exp \bigg (- \epsilon^{2}\sum_{x \in \Lambda_{L,\epsilon}}\bigg ( \frac{1}{2}\|\nabla_{\epsilon} \Phi(x)\|_{\R^{N \times 2}}^{2}+\frac{\lambda}{4}:\big ( \|\Phi(x)\|_{\R^{N}}^{2}- \beta \big )^{2}:dx \bigg ) \bigg ),  \label{e113}
\end{equation}
where $\nabla_{\epsilon}$ denotes the discrete gradient and $:\cdot:$ denotes Wick renormalization relative to a GFF with \textit{unit mass}, see \eqref{e126} for the precise definition.  The measure $\nu_{L}$ can be defined as the unique limiting law as $\epsilon \to 0$ of a suitable extension of $\nu_{L,\epsilon}$ to $\mathcal{D}'(\Lambda_{L})^{N}$, a perspective first advocated in the classical work \cite{guerra1975p} in the $N=1$ setting.  
%These lattice measures also arise by a suitable coarse graining of the Non-Linear Sigma model, corresponding to the first iteration of a renormalization group transformation, see \cite{kopper1999mass}.  
The Linear Sigma Model $\nu$ on $\R^{2}$ is then (tentatively) defined as an infinite volume limit $L \to \infty$ of $\nu_{L}$.  Note that any such limit also depends on $\lambda,\beta$, but we mostly omit this dependence to keep the notation as lean as possible.

Although subsequential limits of $\nu_{L}$ as $L \to \infty$ are known to exist, a number of fundamental questions about the limit points remain unsolved in the vector-valued setting $N \geq 2$.  In particular, one would like to understand the following: 
\begin{itemize}
    \item (Uniqueness) Do the measures $\nu_{L}$ have a unique limit $\nu$ as $L \to \infty$?
    \item (Mass Gap) Do the limit point(s) have exponentially decaying correlations?
\end{itemize}
Closely related to the first question are the recent works \cite{bauerschmidt2025holley,duch2025ergodicityinfinitevolumephi43} on uniqueness of the invariant measure for the infinite volume Langevin dynamic.  For $N=1$, \cite{bauerschmidt2025holley} established uniqueness as long as the two-point function is integrable, and in particular as long as the mass gap holds, whereas the techniques in \cite{duch2025ergodicityinfinitevolumephi43} potentially apply to any $N$ but currently only under a \textit{perturbative assumption} such as $\beta=0$ and $\lambda>0$ sufficiently small. 
One possible precise definition of mass gap, c.f. \cite{duch2025ergodicityinfinitevolumephi43,gubinelli2025decay}, is to demand that for all $G,H \in C^{2}(\R)$, with their first two derivatives bounded, and test functions $g,h \in C^{\infty}_{c}(\R^{2})$ it holds for each component $i \in [N]$
\begin{equation}
\big | \E^{\nu }[G(\Phi_{i}.g)H(\Phi_{i}.h(\cdot+z) ) ]-\E^{\nu}[G(\Phi_{i}.g) ]\E^{\nu}[H(\Phi_{i}.h(\cdot+z) ) ] \big | \leq C e^{-m|z|}, \label{expdecay}
\end{equation}
for all $z \in \R^{2}$, where the constant $C$ is independent of $z$ (but may depend on $G,H,g,h$).  Here and throughout the article we use the notation $g \mapsto \Phi_{i}.g \in \R$ to denote the action of a linear form.  Note that the rate of exponential decay $m=m(\lambda,\beta)$ is referred to as the `mass' and the property \eqref{expdecay} is also referred to as \textit{mass generation} for the model, the idea being that the quartic potential `generates' a mass, since for $\lambda=0$, which corresponds to the (massless) Gaussian Free Field (GFF), \eqref{expdecay} is known to be false.  Note that this is in contrast to the massive GFF, a canonical probability measure on $\mathcal{D}'(\R^{2})$, denoted in this work by $\mu_{m}$, which is the zero mean Gaussian process with covariance $(-\Delta+m^{2})^{-1}$ and satisfies \eqref{expdecay} with mass $m>0$.

The dimension of space plays an important role here, as the fundamental results of Fr\"{o}hlich/Simon/Spencer \cite{frohlich1976infrared} show that \eqref{expdecay} is also false in three dimensions for $\lambda>0$ and $\beta$ sufficiently large.  See also the more recent \cite{chandra2022phase} which establishes the degeneration of the spectral gap in this case.   
%\footnote{The results of  \cite{frohlich1976infrared} also apply to the spin $O(N)$ model on $\Z^{3}$ for $\beta$ large enough.} 
As noted in the introduction to \cite{frohlich1976infrared}, the two dimensional setting is more subtle.  For $N=1$, it is known, just as in the three dimensions, that \eqref{expdecay} holds at sufficiently high temperature but fails at sufficiently low temperature, c.f. \cite{glimm1976convergent}.  The existing proofs of a mass gap at high temperature include cluster expansion in \cite{glimm1974wightman}, correlation inequalities and a continuity argument in \cite{brydges1983new}, and a recent proof via parabolic stochastic quantization in \cite{duch2025ergodicityinfinitevolumephi43}.  All of these arguments necessarily require a perturbative hypothesis on the parameters $\lambda,\beta$.  

However, in two dimensions the understanding of $N \geq 2$ is incomplete and actually a different behavior is expected, as the symmetry group $O(N)$ changes from discrete to continuous, as well as Abelian to Non-Abelian when $N \geq 3$.  A fundamental conjecture on the mass gap of both the Linear and Non-Linear Sigma Model in two dimensions was formulated by Polyakov \cite{polyakov1975interaction}.  The most well-known form is for the spin $O(N)$ model with $N \geq 3$, where exponential decay of correlations is expected for \textit{all positive} $\beta$, see the lectures of Peled/Spinka \cite{peled2019lectures} for a survey on related progress in various lattice models. Furthermore, the mass $m$ is conjectured to \textit{decay exponentially in the inverse temperature}.  More precisely, given an infinite volume limit formally corresponding to 
\begin{equation}
\frac{1}{Z}\text{exp} \bigg ( \sum_{ x \sim y } \langle \Phi(x),\Phi(y) \rangle_{\R^{N}} \bigg )\prod_{x \in \Z^{2} }\delta_{\sqrt{\beta} \mathbf{S}^{N-1}}(\Phi(x))d \Phi(x),
\end{equation}
the conjecture states that for all $z \in \Z^{2}$ 
\begin{equation}
\frac{1}{N}\E \langle \Phi(0), \Phi(z) \rangle_{\R^{N}} \lesssim \exp(-m|z|), \qquad  \ln m \approx -\frac{2 \pi \beta}{N-2}.\label{Polyakov}
\end{equation}
The predicted scaling of $m$ with respect to $\beta$ is connected to the expected asymptotic freedom of the Non-Linear Sigma Model, as it provides a guideline for how to tentatively renormalize the temperature in the spin $O(N)$ model to obtain a continuum limit with a mass gap.  
%\footnote{Indeed, if \eqref{Polyakov} is true on $\Z^{d}$, then on  $\epsilon \Z^{2}$ it should hold $m(\epsilon) \approx\epsilon^{-1}\exp(-\frac{2\pi \beta}{N-2})$, which suggests to tune $\beta(\epsilon) \to \infty$ with the leading order term $-\frac{N-2}{2\pi}\ln(\epsilon m_{0})$ to achieve a mass $m_{0}$ in the limit $\epsilon \to 0$.}  
This deep conjecture is based on renormalization group heuristics in \cite{polyakov1975interaction} which extend to a broader class of models including Yang-Mills in $d=4$, and we mention some related recent progress by Chatterjee \cite{chatterjee2024scaling} which obtains a massive (Gaussian) gauge field in a suitable scaling limit of the lattice Yang-Mills-Higgs model.

In the present work, we study mass generation in the $N \to \infty$ limit, motivated in part by the large body of physics literature on the $1/N$ expansion, see \cite{moshe2003quantum} for an extensive survey.  It is apparent from \eqref{Polyakov} that assuming Polyakov's conjecture, obtaining a positive mass in the large $N$ limit requires a rescaling of $\beta$, and we will make the re-scaling
\begin{equation}
\beta \mapsto N \beta \label{tHooft},
\end{equation}
which is commonly referred to as the T'Hooft \cite{'tHooft:413720} scaling in the context of lattice Yang-Mills.  In fact, already in the works \cite{stanley1968spherical,kac_thompson_1971} it was shown that under the rescaling \eqref{tHooft}, the logarithmic asymptotics of the partition function of the spin $O(N)$ model are the same as for the Berlin-Kac \cite{berlin1952spherical} model. The first rigorous result in the direction of \eqref{Polyakov} is due to Kupiainen \cite{kupiainen19801}, who proved that under  \eqref{tHooft}, for any $\beta>0$ there exists an $N_{0}=N_{0}(\beta)$ such that the mass gap for the spin $O(N)$ model holds for $N \geq N_{0}$, where the mass converges as $N \to \infty$ to a limit which is approximately $\text{exp}(-2 \pi \beta)$ for large $\beta$, consistent with the prediction \eqref{Polyakov}.  Similar results were also obtained independently and with a different proof by Kopper \cite{kopper1999mass} and Ito/Tamura \cite{ito1999n}.  In a later paper \cite{kupiainen19802}, which is a major source of inspiration for our work, Kupiainen turned to the continuum and studied the $1/N$ expansion of the Linear Sigma Model on $\R^{2}$ with $\beta=0$ and a re-scaled coupling constant
\begin{equation}
\lambda \mapsto \frac{\lambda}{N}, \label{MFS}
\end{equation}
which is the typical scaling for the $1/N$ expansion, c.f. \cite{moshe2003quantum}.  Let us denote by $\nu^{N}$ any infinite volume limit as $L \to \infty$ of $\nu^{N}_{L}$ obtained after jointly re-scaling $\beta,\lambda$ according to \eqref{tHooft} and \eqref{MFS}.  Although it remains open to establish the mass gap for $\nu^{N}$ without a perturbative hypothesis, Kupiainen established a rigorous $1/N$ expansion for the infinite volume pressure \footnote{By `infinite volume pressure', we mean the limit of logarithm of the partition function on $\Lambda_{L}$ divided by the volume $L^{2}$.}, 
with quantitative estimates on the truncation error to any accuracy in powers of $1/N$.

In the present work, we continue studying the $1/N$ expansion, focusing not on the pressure, but on new estimates for $\nu^{N}$.  Although we do not attempt to address the very difficult mass gap problem at finite $N$, we nonetheless prove that mass is always generated in the $N \to \infty$ limit, even at \textit{arbitrarily low temperature and large coupling}.  In fact, we prove that the large $N$ limit of each component is a GFF with mass $m_{*} \approx \exp(-2 \pi \beta)$, see Theorem \ref{mainthm:iv} for the precise bounds on the Wasserstein distance and \eqref{e127} for the non-linear equation which characterizes $m_{*}$.  The key implication of our new estimates, formulated at the level of smooth observables is the following main result of our work.  For $i \in [N]$, we denote by $P_{i}\nu^{N} \in \mathcal{P}(\mathcal{D}'(\R^{2}))$ the law of the $i^{th}$ component.
\begin{theorem} \label{ref:MainThm}
For all $\lambda>0$ and $\beta \geq 0$, there exists a mass $m_{*}=m_{*}(\lambda,\beta)$ and a constant $C=C(\lambda,\beta)$ independent of $N$ with the following property.  Any infinite volume limit $\nu^{N}=\nu^{N,\lambda,\beta}$ of the Linear Sigma Model on $\R^{2}$ obtained from periodic b.c. satisfies 
\begin{equation}
\bigg | \int_{\mathcal{D}'(\R^{2})}F(\Psi)d( P_{i} \nu^{N}-\mu_{m_{*}} )(\Psi)  \bigg | \lesssim_{F}\frac{C}{N^{\frac{1}{2}}} \label{eq:MTEstimate}  
\end{equation}
for all $i \in [N]$ and suitable cylindrical functionals $F: \mathcal{D}'(\R^{2}) \mapsto \R$, where the limiting mass $m_{*}$ satisfies
\begin{equation}
\ln m_{*}=-2\pi \beta+O\big (\lambda^{-1} \big). \label{ref:massScaling}
\end{equation}
\end{theorem}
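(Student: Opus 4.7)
The plan is to work with the lattice-regularized, finite-volume measure $\nu^{N}_{L,\ve}$ from \eqref{e113} under the rescalings \eqref{tHooft} and \eqref{MFS}, linearize the quartic self-interaction via a Hubbard--Stratonovich auxiliary field $\sigma:\Lambda_{L,\ve}\to\R$, and then combine large-$N$ concentration of $\sigma$ with the Feyel--\"Ust\"unel extension of Talagrand's transportation inequality. Applying the pointwise identity
\[
 e^{-\frac{\lambda}{4N}(z - N\beta)^{2}} \varpropto \int_{\R} e^{-\frac{N}{\lambda}s^{2} - is(z - N\beta)}\,ds
\]
at each site of $\Lambda_{L,\ve}$ with $z=\|\Phi(x)\|^{2}_{\R^{N}}$ rewrites $\nu^{N}_{L,\ve}$ as a mixture in which, conditionally on $\sigma$, $\Phi$ is a product of $N$ i.i.d.\ Gaussian fields with covariance $(-\Delta_{\ve} + 2i\sigma)^{-1}$. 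This exact conditional product structure propagates directly into each marginal $P_{i}\nu^{N}_{L,\ve}$ and is the mechanism by which the large-$N$ limit is a single Gaussian.

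Integrating out $\Phi$ yields a law on $\sigma$ of the form $d\rho^{N}\varpropto \exp(-N\cS_{L,\ve}(\sigma))\,d\sigma$, with effective action
\[
\cS_{L,\ve}(\sigma) = \frac{1}{\lambda}\int\sigma^{2}\,dx - i\beta\int\sigma\,dx + \tfrac{1}{2}\log\det\bra{-\Delta_{\ve}+2i\sigma},
\]
the log-determinant being understood after Wick renormalization with respect to the unit-mass GFF as in \eqref{e113}. The large factor $N$ justifies a rigorous saddle-point analysis on the deformed contour $\sigma = -\tfrac{i}{2}m^{2} + \tilde\sigma/\sqrt{N}$, the saddle $m_{*}$ being determined by the gap equation
\[
\frac{m_{*}^{2}}{\lambda} = G^{\mathrm{ren}}_{m_{*}}(0) - \beta,
\]
where $G^{\mathrm{ren}}_{m_{*}}(0)$ is the Wick-renormalized diagonal of the continuum massive Green's function $(-\Delta+m_{*}^{2})^{-1}$ on $\R^{2}$. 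The classical 2D short-distance expansion $G^{\mathrm{ren}}_{m}(0) = (2\pi)^{-1}\ln(1/m) + O(1)$ as $m\to 0$ immediately produces \eqref{ref:massScaling}; existence and uniqueness of a positive $m_{*}$ follow from monotonicity of $m\mapsto G^{\mathrm{ren}}_{m}(0)-m^{2}/\lambda$ on the relevant interval.

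With $m_{*}$ in hand, the core estimate is a bound on the Wasserstein distance of $P_{i}\nu^{N}_{L,\ve}$ to $\mu_{m_{*}}$ in the weighted $H^{1}(\R^{2})$ cost associated to the Cameron--Martin space of $\mu_{m_{*}}$. The Feyel--\"Ust\"unel Talagrand inequality for the Gaussian reference $\mu_{m_{*}}$ yields
\[
W_{2}^{2}\bra{P_{i}\nu^{N}_{L,\ve},\mu_{m_{*}}} \leq 2\,\Ent\bra{P_{i}\nu^{N}_{L,\ve}\,|\,\mu_{m_{*}}}.
\]
The Hubbard--Stratonovich representation writes $P_{i}\nu^{N}_{L,\ve}$ as the mixture of the Gaussian marginals $\gamma^{i}_{\sigma}$ with covariance $(-\Delta_{\ve}+2i\sigma)^{-1}$ weighted by $\rho^{N}$, and joint convexity of the relative entropy bounds it by $\int \Ent(\gamma^{i}_{\sigma}\,|\,\mu_{m_{*}})\,d\rho^{N}(\sigma)$, which for Gaussians reduces at leading order to a trace expression quadratic in the fluctuation $\tilde\sigma = \sqrt{N}(\sigma-\sigma_{*})$. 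Gaussian concentration of $\tilde\sigma$ under $\rho^{N}$, driven by positive-definiteness of the Hessian of $\cS_{L,\ve}$ at $\sigma_{*}$ whose Fourier symbol $2\lambda^{-1} + 2B_{m_{*}}(k)$ involves the manifestly positive bubble integral $B_{m_{*}}(k)$ of the massive GFF, then delivers an $O(1/N)$ entropy bound and hence the $O(N^{-1/2})$ Wasserstein bound. The estimate \eqref{eq:MTEstimate} on cylindrical functionals follows by Kantorovich--Rubinstein duality together with Lipschitz continuity of such $F$ in the weighted $H^{1}$ norm.

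The main obstacle will be making the saddle-point/contour-deformation analysis quantitative and \emph{uniform} in the regularization parameters $L,\ve$ and non-perturbative in $\lambda,\beta$: rigorously shifting the $\sigma$-contour in infinite dimensions, controlling the Wick-renormalized log-determinant beyond perturbative regimes, and proving uniform positive-definiteness of the $\sigma$-Hessian with constants depending only on $(\lambda,\beta)$. Once these uniform entropy bounds are established, the limits $\ve\to 0$ and $L\to\infty$ follow from classical EQFT tightness for $\nu^{N}_{L,\ve}$ together with weak-$\ast$ stability of the Feyel--\"Ust\"unel bound on cylindrical functionals.
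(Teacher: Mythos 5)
Your proposal correctly identifies two of the paper's central ideas: the gap equation determining $m_{*}$ (and its small-$m$ asymptotics giving \eqref{ref:massScaling}), and the Feyel--\"Ust\"unel Talagrand inequality as the bridge from entropy bounds to Wasserstein bounds. However, the Hubbard--Stratonovich route you propose for the entropy estimate has a genuine gap that is not merely a "technical obstacle to be made quantitative." After linearizing the quartic interaction and shifting the $\sigma$-contour to the saddle, the conditional law of $\Phi$ given $\sigma$ has covariance $(-\Delta_\ve+2i\sigma)^{-1}$ with a genuinely complex perturbation $2i\tilde\sigma/\sqrt N$, and the integrating weight $\rho^{N}(\sigma)$ is an oscillatory complex density, \emph{not} a probability measure. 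Consequently the claimed step "joint convexity of relative entropy bounds it by $\int \Ent(\gamma^{i}_{\sigma}\,|\,\mu_{m_{*}})\,d\rho^{N}(\sigma)$" is not available: convexity of $\mathcal{H}(\cdot\,|\,\mu_{m_*})$ applies only to mixtures over genuine probability measures, and $\mathcal{H}(\gamma^{i}_{\sigma}\,|\,\mu_{m_*})$ is not even defined when $\gamma^{i}_{\sigma}$ has complex covariance. The same issue makes "Gaussian concentration of $\tilde\sigma$ under $\rho^{N}$" ill-posed. This is precisely the difficulty that drove Kupiainen's analysis of the $1/N$ expansion of the pressure, and it is what the paper deliberately avoids.

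The paper's actual route sidesteps HS entirely. The gap equation \eqref{e91} is obtained by absorbing a quadratic counterterm of the Wick-renormalized potential into the reference Gaussian (Lemma \ref{lem:changingMass}), so that $\nu^{N}_{L}$ is rewritten, as a genuine probability measure throughout, with respect to $\mu_{m,L}^{\otimes N}$ for the solution $m=m(N,L)$ of the finite-volume gap equation. The relative entropy $\mathcal{H}(\nu^{N}_{L}\,|\,\mu_{m,L}^{\otimes N})$ is then bounded by a combination of Nelson-type stability, Checkerboard, and Chessboard estimates (Proposition \ref{prop:relEntropyDensity}) that deliver the crucial feature your proposal does not address: a bound on the entropy \emph{density} $L^{-2}\mathcal{H}$ uniformly in $L$ and $N$. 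That optimal scaling in $L$, combined with the existence of a translation-invariant optimal coupling (Lemma \ref{lem:tranlation}) and the summable weight $\rho$, is what allows passing to the infinite-volume weighted-$H^1$ Wasserstein bound; a generic "tightness plus weak-$\ast$ stability" argument, as you invoke, would lose the quantitative $N^{-1/2}$ rate, since after sending $L\to\infty$ the relative entropy with respect to any massive GFF on $\R^2$ is infinite and Talagrand's inequality cannot be applied directly to the limit. In short: your identification of the gap equation and the Talagrand ingredient is right, but the proposed entropy bound via a complex HS mixture does not go through, and the uniform-in-$L$ entropy-density estimate that makes the infinite-volume argument quantitative is missing.
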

It is worth noting that, as explained in Remark \ref{rem:IndependOfL}, for the observables appearing in \eqref{expdecay}, the implicit constant in \eqref{eq:MTEstimate}  can be taken independent of $z$.  Therefore, \eqref{expdecay} cannot fail by more than an error of order $CN^{-\frac{1}{2}}$, which at least places some mild constraints on the correlations of observables for large finite $N$.  Based on the results of Lacker \cite{lacker2022quantitative} in the finite dimensional setting, it seems likely the optimal scaling in \eqref{eq:MTEstimate} is actually $N^{-1}$, and we leave it to future work to determine whether such a refinement would have interesting implications for EQFT.  Despite this, the estimate \eqref{eq:MTEstimate} is actually a consequence of a bound \eqref{eq:W2decay} on a suitable $2$-Wasserstein distance between $\nu^{N}$ and $\mu_{m_{*}}^{\otimes N}$, which does display the expected optimal scaling in $N$.  The dependence of $C$ in terms of $\lambda,\beta$ can be made reasonably explicit for $\lambda$ large, in fact $\ln C$ is shown to be on the order of $\lambda(\ln \lambda)^{2}$, a scaling which is familiar from optimal bounds on the partition function when $N=1$, c.f. \cite{guerra1976boundary}.  It is not clear to us what the optimal scaling in $\lambda$ ought to be, but based on various technical bounds in the work \cite{kupiainen19801}, it seems likely there is more room for improvement, see Remark \ref{rem:KupiainenIBP} below for a more in depth discussion.  Nonetheless, our estimates allow us to take a double scaling limit $\lambda,N \to \infty$ where $\lambda$ grows sub-logarithmically with $N$, obtaining now a GFF with mass
exactly $\exp(-2\pi \beta)$.
\begin{corollary}\label{cor:doubleScalingLimit}
In the limit $N,\lambda  \to \infty$ with $\lambda (\ln \lambda)^{2}=o(\ln N)$ it holds that $P_{i}\nu^{N,\lambda}$ converges in law on $\mathcal{P}(\mathcal{D}'(\R^{2}))$ to $\mu_{\gamma}$, where $\gamma=e^{-2\pi \beta}$.
\end{corollary}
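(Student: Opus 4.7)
The plan is to derive the double scaling limit as an essentially immediate consequence of Theorem \ref{ref:MainThm} and the asymptotic \eqref{ref:massScaling}, interpolating through the intermediate Gaussian $\mu_{m_{*}(\lambda,\beta)}$. Since weak convergence on $\mathcal{D}'(\R^{2})$ is determined by cylindrical functionals, it suffices to show that for every admissible $F$ in Theorem \ref{ref:MainThm},
\begin{equation*}
\bigl| \textstyle\int F \, d(P_{i}\nu^{N,\lambda,\beta} - \mu_{\gamma}) \bigr| \leq \bigl| \textstyle\int F \, d(P_{i}\nu^{N,\lambda,\beta} - \mu_{m_{*}(\lambda,\beta)}) \bigr| + \bigl| \textstyle\int F \, d(\mu_{m_{*}(\lambda,\beta)} - \mu_{\gamma}) \bigr|
\end{equation*}
tends to zero along the joint limit $N,\lambda \to \infty$ with $\lambda (\ln \lambda)^{2} = o(\ln N)$.

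For the first term I would invoke Theorem \ref{ref:MainThm} directly, tracking the explicit $\lambda$-dependence of the constant mentioned right after the theorem statement, namely $\ln C(\lambda,\beta) = O(\lambda (\ln \lambda)^{2})$ as $\lambda \to \infty$ (with $\beta$ fixed). Under the scaling hypothesis,
\begin{equation*}
\frac{C(\lambda,\beta)}{N^{1/2}} = \exp\bigl( O(\lambda (\ln \lambda)^{2}) - \tfrac{1}{2} \ln N \bigr) \longrightarrow 0,
\end{equation*}
so this term vanishes. For the second term, \eqref{ref:massScaling} gives $m_{*}(\lambda,\beta) \to e^{-2\pi\beta} = \gamma > 0$ as $\lambda \to \infty$, and since $\mu_{m}$ is the centered Gaussian with covariance $(-\Delta + m^{2})^{-1}$, which depends continuously on $m > 0$ when paired with Schwartz functions, the measures $\mu_{m_{*}(\lambda,\beta)}$ converge weakly to $\mu_{\gamma}$ on $\mathcal{D}'(\R^{2})$. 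Thus the expectation of any cylindrical $F$ built from continuous bounded functions of finitely many pairings $\Psi.g$ converges.

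I do not anticipate a serious obstacle here: the only real content is the combination of the quantitative rate in $N$ from Theorem \ref{ref:MainThm} with the quantitative rate in $\lambda$ for the limiting mass from \eqref{ref:massScaling}, and the choice of scaling $\lambda (\ln \lambda)^{2} = o(\ln N)$ is precisely what balances the two. The mildly delicate point is to check that the class of cylindrical functionals admissible in \eqref{eq:MTEstimate} is rich enough to characterize weak convergence on $\mathcal{D}'(\R^{2})$; this is a standard density argument once one knows the estimate holds for polynomial functionals of pairings against Schwartz functions.
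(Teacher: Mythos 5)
Your proof is correct and follows essentially the same path as the paper's one-sentence argument in Remark~\ref{rem:symmetryUnbroken}: apply the Wasserstein bound from Theorem~\ref{mainthm:iv} (or its observable-level consequence~\eqref{eq:MTEstimate}) with the explicit $\lambda$-dependence $\ln C(\lambda,\beta)=O(\lambda(\ln\lambda)^2)$, verify that the exponential trade-off against $\tfrac12\ln N$ vanishes under the scaling hypothesis, and then combine with the weak convergence $\mu_{m_*(\lambda,\beta)}\rightharpoonup\mu_\gamma$ as $\lambda\to\infty$. The only detail you should flag explicitly, which the paper also leaves implicit, is that the $F$-dependent factor in \eqref{eq:MTEstimate}, namely $\sum_k\|\partial_k G\|_{L^\infty}\|g_k\|_{H^1_{m_*}(\rho)^*}$, stays bounded along the limit because $m_*$ is bounded away from $0$ (indeed $m_*\in[e^{-2\pi(\beta+\lambda^{-1})},e^{-2\pi\beta}]$ by \eqref{eq:limitMassLB}), so the dual norms $\|g_k\|_{H^1_{m_*}(\rho)^*}$ do not degenerate as $\lambda$ varies.
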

Progress in understanding rigorously the $1/N$ expansion for the Linear Sigma Model has been developing in recent years, largely as a byproduct of advances in parabolic stochastic quantization, c.f. \cite{jona1985stochastic,da2003strong,hairer2014theory, mourrat2017global,gubinelli2021pde}.  In \cite{shen2022large}, a propagation of chaos result was established for the Langevin dynamic on finite time intervals and in \textit{finite volume} towards a limiting Mckean-Vlasov type singular SPDE, for which the massive GFF is always one possible invariant measure.  While the understanding of the leading term in the $1/N$ expansion of the dynamic in \cite{shen2022large} is relatively complete in finite regions of space-time, the analysis of the invariant measure was completed only under suitable perturbative hypotheses.  More precisely, \cite{shen2022large} shows that for the Linear Sigma Model on the Torus corresponding to \eqref{e115} introduced below, Wick renormalized according to the reference GFF of mass $m$, each component converges towards this reference GFF in the large $N$ limit provided \textit{m is sufficiently large}. 
% \footnote{Translated to \eqref{e101}, this would essentially correspond to $\lambda$ sufficiently small, although even this does not follow directly from \cite{shen2022large} and would probably require to mix the techniques in \cite{shen2022large} with our analysis in Section \ref{subsec:GapEquation}.}  
Some of the results in \cite{shen2022large} were extended and strengthened in the significant works \cite{shen2022large3d, shen2025large}, which studied the mean-field behavior in three dimensions and the limiting law of equilibrium fluctuations in two dimensions, but these results are also restricted to finite volume and a perturbative regime of the coupling constants.  The recent work \cite{liu2025hyperbolic} also studies some similar questions as in \cite{shen2022large} but from the viewpoint of hyperbolic stochastic quantization.  Finally, we mention \cite{aru2024limiting,ye2025large} on the behavior of the spin $O(N)$ model and the associated Langevin dynamics as $N \to \infty$ under \eqref{tHooft}, where a GFF on $\Z^{d}$ is observed in the large $N$ limit, up to a possible phase transition in dimensions three or higher, where the mass becomes zero at sufficiently high temperature. Note that in \cite{aru2024limiting}, the limit $N \to \infty$ is taken before sending $L \to \infty$ which is somewhat simpler to analyze, while \cite{ye2025large} can take the limit in both orders but requires $\beta$ sufficiently small.  

The present work resolves the questions left open in \cite{shen2022large} regarding the leading order behavior of the $1/N$ expansion by \textit{removing all perturbative hypotheses} and also addressing the limiting behavior in \textit{infinite volume}.  Note that both are important in order to draw the analogy with Polyakov's conjecture.  Furthermore, our strategy of proof is quite different from \cite{shen2022large}, where a natural coupling is obtained via the stochastic dynamic and estimates on the 2-Wasserstein distance are established from suitable bounds on the Debussche/Da-Prato \cite{da2003strong} remainder.  In contrast to \cite{shen2022large}, we don't rely on any SPDE at all, but rather on a combination of Talagrand's inequality, classical techniques from EQFT c.f. \cite{nelson1966quartic,simon2015p,guerra1975p,guerra1976boundary}, and some very basic ideas from the theory of optimal transportation.  

More precisely, we use the Feyel/\"Ust\"unel \cite{feyel2002measure,feyel2004monge,lehec2013representation} extension of Talagrand's inequality \cite{Talagrand1996} to Gaussian measures in infinite dimensions, which in particular applies to the massive GFF, see Section \ref{sec:TAL} for a precise statement and proof. In Section \ref{sec:finiteVolume}, we revisit the finite volume setting of \cite{shen2022large} and study the Linear Sigma Model on $\Lambda_{L}$ corresponding to the potential \eqref{e115} for \textit{arbitrary} $m>0$, establishing convergence of the marginals towards $\mu_{m,L}$, the massive GFF on $\Lambda_{L}$.  The use of Talagrand's inequality reduces the problem to bounding the relative entropy $\nu^{N}_{L}$ with respect to $\mu_{m,L}^{\otimes N}$ uniformly in $N$, which we achieve via suitable Gaussian estimates and uniform bounds on the partition function by Nelson's method \cite{nelson1966quartic}.

The idea of using relative entropy bounds to understand mean field limits at stationarity is already present in the paper \cite{arous1999increasing} of Ben-Arous/Zeitouni. Moreover, since the work of Jabin-Wang \cite{jabin2018quantitative}, uniform bounds on the partition function have become fundamental to prove mean field limits for SDEs with singular interactions by relative entropy methods, see also the classical work of Ben Arous/Brunaud \cite{arous1990methode}. Bresch-Jabin-Wang \cite{bresch2019modulated} combined relative entropy methods with the breakthrough of Serfaty~\cite{serfaty2020mean} to prove rigorously and quantitatively the mean field limit to the classical problem of the repulsive log-gas. Later, Bresch/Jabin/Wang\cite{bresch2023mean} and
De Courcel/Rosenzweig/Serfaty\cite{de2023sharp} extended the partition function bound and the mean-field limit to the attractive log-gas, also known as the Patlak-Keller-Segel system. We note that the logarithmic singularity of the log-gas matches the behavior of the Green's functions in 2d. In fact, the techniques of Nelson \cite{nelson1966quartic} can be used to obtain sharp estimates on the repulsive log-gas case, see the recent work of the first author and Gvalani \cite{delgadino2025sharp}.

Despite the conceptual similarity with these recent developments for SDEs, the subtle behavior of the infinite volume limit $L\to\infty$, which is taken prior to sending $N \to \infty$, seems not to have a clear analogue with the above works.  In fact, it is this part of the analysis where we crucially use Talagrand's inequality rather than Pinkser's inequality, as it allows us to later exploit translation invariance.   
Namely, in Section \ref{sec:IVL}, we return to the non-convex classical potential \eqref{e116} motivated by \eqref{e100}, and study the more delicate infinite volume problem.  In the limit $L \to \infty$, absolute continuity is lost and the relative entropy of a limit point $\nu^{N}$ with respect to a massive GFF on $\R^{2}$ is infinite, so a direct application of Talagrand's inequality for measures on $\mathcal{D}'(\R^{2})^{N}$ is not helpful.  Instead we turn to a more careful study of the volume dependence of the relative entropy of $\nu^{N}_{L}$ with respect to $\mu_{m,L}^{\otimes N}$ for a suitable $m=m(N,L)$ satisfying a `gap equation' \eqref{e91} which is shown to approach $m_{*}$ sufficiently fast as $N,L \to \infty$.  A key point is to obtain bounds which scale optimally with the volume of the torus $\Lambda_{L}$, uniformly in $N$, which we achieve by a combination of checkerboard and chessboard estimates, reviewed for completeness in Appendix \ref{app:C}.  By leveraging translation invariance and optimality of the scaling in $L$,
the key estimate \eqref{eq:W2decay} then follows by a soft qualitative argument. 

\subsection{Notation}
For a natural number $n$, we write $[n]$ for the set of integers from $1$ to $n$.  For a subset $A \subset \R^{2}$, we denote by $|A|$ its Lebesgue measure .  We write $\lesssim$ to indicate an inequality that holds up to a universal constant, with a further subscript such as $\lesssim_{\delta}$ if the implicit constant is also allowed to depend on some other parameter $\delta$.  For a vector $v \in \R^{N}$ and $M \in \R^{N \times 2}$, we denote $\|v\|_{\R^{N}}^{2} \eqdef\sum_{i=1}^{N}v_{i}^{2}$ and $\|M\|_{\R^{N \times 2}}^{2}=\sum_{i=1}^{N}\sum_{j=1}^{2}M_{ij}^{2}$.

We denote by $\Lambda_{L}$ the two dimensional Torus centered at the origin with side lengths $L$, which we identify with $(-\frac{L}{2},\frac{L}{2}]^{2}$.   The space of $L$-periodic smooth functions and distributions on $\Lambda_{L}$ with $N \in \N$ components are denoted $C^{\infty}(\Lambda_{L} )^{N}$ and $\mathcal{D}'(\Lambda_{L} )^{N}$, respectively. The corresponding frequency space $\Lambda_{L}^{*} \eqdef \frac{2\pi}{L}\Z^{2}$, and the $\ell^{p}(\Lambda_{L}^{*})$ norm is defined by
\begin{equation}
\|g\|_{\ell^{p}(\Lambda_{L}^{*} )} \eqdef \bigg ( L^{-2}\sum_{\xi \in \Lambda_{L}^{*}}| g(\xi)|^{p} \bigg )^{\frac{1}{p}}.
\end{equation}
For a distribution $\omega \in \mathcal{D}'(\Lambda_{L})$ and test function $\varphi \in C^{\infty}(\Lambda_{L})$, the action of the linear form $\omega$ on $\varphi$ is denoted $\omega.\varphi \in \R$.  Given a lattice approximation $\Lambda_{L,\epsilon}$ of $\Lambda_{L}$ with lattice spacing $\epsilon$ and a function $f: \Lambda_{L,\epsilon} \to \R$, we denote by
\begin{equation}
\int_{\Lambda_{L,\epsilon}}f(x)dx \eqdef \epsilon^{2}\sum_{x \in \Lambda_{L,\epsilon}}f(x).
\end{equation}
We fix a $\sigma>1$ and a define a weight $\rho: x=(x_{1},x_{2}) \in \R^{2} \mapsto \rho(x) \eqdef (1+|x_{1}|^{2}+|x_{2}|^{2} )^{-\sigma}$, which in turn defines for $m>0$ and $s \in \R$ the weighted $H_{m}^{1}(\rho )$ norm on the plane by
\begin{equation}
\|\varphi\|_{H^{1}_{m}(\rho ) }^{2} \eqdef \int_{\R^{2}}  \big ( m^{2}|\varphi(x)|^{2}+|\nabla \varphi(x)|^{2} \big )\rho(x)dx \nonumber.
\end{equation}
For a locally convex topological space $E$, we denote by $\mathcal{P}(E)$ the set of probability measures on the corresponding Borel sigma algebra, which we may endow with the topology of weak convergence.  Given measure spaces $(\Omega_{i},\mathcal{F}_{i} )$ for $i \in [2]$, a probability measure $\P_{1}$ on $\mathcal{F}_{1}$, and a measurable mapping $T: \Omega_{1} \mapsto \Omega_{2}$, we denote by $T _{\#}\P_{1}$ the induced law (or pushforward) on $\mathcal{F}_{2}$.
\section{Large $N$ Limit of The Linear Sigma Model on the Torus} \label{sec:finiteVolume}

In this section, we study a variant of the Linear Sigma model \eqref{e101} with $\beta=0$, where the classical potential is replaced by a strictly convex version by introducing a mass, namely
\begin{equation}
\Phi \in \mathcal{D}(\Lambda_{L})^{N} \mapsto  \frac{1}{2}\int_{\Lambda_{L}}\big ( m^{2}\|\Phi(x)\|_{\R^{N}}^{2}+ \|\nabla \Phi(x)\|_{\R^{N \times 2}}^{2} \big )dx+\frac{\lambda}{4N}\int_{\Lambda_{L}} \|\Phi(x)\|_{\R^{N}}^{4}dx, \label{e115} 
\end{equation}
where $m^{2}>0$. In Section \ref{ss:MGFF}, we review the definition and basic properties of the massive Gaussian Free Field with mass $m>0$ on $\Lambda_{L}$, whose $N$-fold tensor product corresponds to the unique Gibbs measure associated to \eqref{e115} when $\lambda=0$. In Section \ref{subsec:stab}, we develop the necessary uniform estimates on the massive GFF required for the construction of the Linear Sigma Model.  These estimates reduce to Nelson's construction \cite{nelson1966quartic} in the case $N=1$, however for our applications to the $N \to \infty$ limit, we need to track carefully the dependence of all relevant estimates on the mass $m$, the box size $L$, and most importantly the number of components $N$.  In Section \ref{subsec:Talagrand}, we recall the definition of the Linear Sigma Model, postponing  classical details of the construction to Appendix \ref{app:C}, then we study the large $N$ limit.

\subsection{The Massive Gaussian Free Field} \label{ss:MGFF}
In this section, we define the law of $N$ i.i.d. copies of the massive GFF on $\Lambda_{L}$ with covariance operator $(-\Delta+m^{2})^{-1}$, which we denote throughout the text by $\mu_{m,L}^{\otimes N}$.  In addition, we recall some basic facts regarding the associated Mallaivin calculus, as well as the notion of Wick products required for our analysis.   

For $N \in \N$, we define a probability space $\Omega \eqdef\mathcal{D}'(\Lambda_{L})^{N}$ and equip it with the Sigma Algebra $\mathcal{F}$ generated by the canonical process $Z$ given by
\begin{equation}
    (\omega, \varphi) \in \Omega \times C^{\infty}(\Lambda_{L})^{N} \mapsto Z(\omega).\varphi \eqdef \sum_{i=1}^{N}\omega_{i}.\varphi_{i} \nonumber.
\end{equation}
For $m>0$, we define the inner product $\langle \cdot, \cdot \rangle_{H_{m}^{-1}(\Lambda_{L})}$ for $\varphi, \psi \in C^{\infty}(\Lambda_{L})^{N}$ by
\begin{equation}
\langle \varphi, \psi \rangle_{ {H_{m}^{-1}(\Lambda_{L})^{N}} } \eqdef  \sum_{i=1}^{N}\frac{1}{L^2}\sum_{\xi \in \Lambda_{L}^{*} } \frac{\widehat{\phi_{i} }(\xi)\overline{\widehat{\psi_{i} } }(\xi)}{m^{2}+|\xi|^{2}}\label{e102}
\end{equation}
Of course, since $m^{2}>0$ the inner product on $H^{-1}_{m}(\Lambda_{L} )^{N}$ is equivalent to the standard $H^{-1}(\Lambda_{L} )^{N}$ inner product, but tracking the dependence of various estimates on $m$ is important in this section.
%\textcolor{red}{Is it a definition or a statement?}
\begin{definition} \label{def:freeField}
 The law of $N$ iid copies of the GFF on $\Lambda_{L}$ with mass $m>0$ is the Gaussian measure $\mu_{m,L}^{\otimes N}$ on $(\Omega, \mathcal{F})$ such that for all $\varphi, \psi \in C^{\infty}(\Lambda_{L} )^{N}$ it holds
\begin{equation}
\E^{\mu_{m,L}^{\otimes N}}\left[ (Z.\varphi) (Z.\psi)\right]=\langle \varphi, \psi \rangle_{H_{m}^{-1}(\Lambda_{L})^{N}}.
\end{equation}
 \end{definition}
It will be convenient for the proof of our stability estimates in later sections to use some basic aspects of Malliavin calculus with respect to the GFF.  We start by observing that Definition \ref{def:freeField} implies that for $H \eqdef H^{-1}_{m}(\Lambda_{L})^{N}$, the collection of random variables
\begin{equation}
    \big \{ Z.\varphi  \mid \varphi \in H  \big \} \nonumber
\end{equation}
 constitutes an \textit{isonormal Gaussian family} on $L^{2}(\Omega, \mathcal{F}, \mu_{m,L}^{\otimes N})$, in the language of \cite{nualart2009malliavin}.  This provides the starting point for a Malliavin calculus with respect to variations of $Z$.  Indeed, recall that given any polynomial $p:\R^{k} \mapsto \R$ and $\varphi^{1},\cdots, \varphi^{k} \in H$, the random variable $F$ given by
\begin{equation}
F=p(Z.\varphi^{1},\cdots, Z.\varphi^{k} ) \in L^{2}(\Omega ) \label{e46}
\end{equation}
is `smooth' in the sense of Malliavin calculus.  Namely, for each $m \in \N$, the Malliavin derivative of order $\ell$, denoted $D^{\ell}F$, is a symmetric element of $L^{2}(\Omega; H^{\otimes \ell} )$ defined by
\begin{equation}
    D^{l}F \eqdef \sum_{i_1,...,i_l=1}^k\partial_{i_1,...,i_l} p(Z.\varphi^{1},\cdots, Z.\varphi^{l} ) \varphi^{i_1}\otimes...\otimes\varphi^{i_l},\label{e47}
\end{equation}
with $\otimes$ denoting the standard tensor product.

Furthermore, Malliavin calculus allows to express correlation of two smooth random variables $F$ and $G$ in terms of the mean-values of their derivatives.
Stroock's formula \cite{stroock1987homogeneous} states that for any smooth random variables $F$ and $G$
\begin{equation}
   \E^{\mu_{m,L}^{\otimes N}}(FG)-(\E^{\mu_{m,L}^{\otimes N}} F) (\E^{\mu_{m,L}^{\otimes N}} G)=\sum_{l=1}^{\infty}\frac{1}{l!}\langle \E^{\mu_{m,L}^{\otimes N}}D^{l}F, \E^{\mu_{m,L}^{\otimes N}}D^{l}G \rangle_{H^{\otimes l}} \label{e41}.
\end{equation}
The formula \eqref{e41} can be viewed as Parseval's inequality with respect to the natural basis for $L^{2}(\Omega)$ based on products of Hermite polynomials. We refer to Appendix~\ref{app:A} for further details. Alternatively, the reader could consult in \cite[Appendix B]{ustunel2013transformation}, and in particular Theorem B.4.1. 

Next, we discuss the Wick renormalization of the massive GFF.  We can define the $n$-th Wick power by means of the Hermite polynomials of order $n$
\begin{equation}
z \in \R \mapsto H_n(z)\eqdef e^{\frac{|z|^2}{2}}\frac{d^n}{dz^n}e^{-\frac{|z|^2}{2}} \label{e70}.
\end{equation}
In particular, we recall
\begin{equation}
    H_{2}(z) \eqdef z^{2}-1, \quad H_{4}(z) \eqdef z^{4}-6 z^{2}+3. \label{e37}
\end{equation}
For $Z$ sampled from $\mu_{m,L}^{\otimes N}$, formally we would like to define Wick products of $Z_{i}$ by evaluating the Hermite polynomial at $Z_{i}$.  However, since $Z_{i}$ not well-defined pointwise, we introduce a suitable ultraviolet cutoff in momentum space at frequencies of order $\epsilon^{-1}$. More specifically,  we fix a smooth, radially symmetric function $0 \leq \eta \leq 1$ supported in $B_{1}$, taking the value $1$ on $B_{\frac{1}{2}}$.  Given $u \in \mathcal{D}'(\Lambda_{L})$ and $\epsilon>0$, denote by $u_{\epsilon} \in C^{\infty}(\Lambda_{L})$ the smooth function given by its Fourier transform via
\begin{equation}
\xi \in \Lambda_{L}^{*} \mapsto  \widehat{u}_{\epsilon}(\xi) \eqdef \eta(\epsilon \xi) \widehat{u}(\xi) \label{e69}.
\end{equation}
This leads us to the definition of the regularized Wick products relative to the reference measure $\mu_{m,L}^{\otimes N}$ for any vector-valued distribution $Z \in \mathcal{D}'(\Lambda_{L})^{N}$
\begin{align}\label{def:WickProd}
:Z_{\epsilon,i}^{2}Z_{\epsilon,j}^{2}:_{m} &\eqdef
 \begin{cases}
C_{\epsilon,L}^{m}H_{2}\left(\frac{Z_{\epsilon,i}}{(C_{\epsilon,L}^{m})^{\frac{1}{2}}}\right ) C_{\epsilon,L}^{m}H_{2}\left(\frac{Z_{\epsilon,j}}{(C_{\epsilon,L}^{m})^{\frac{1}{2}}}\right )\quad &\text{for} \quad i \neq j \\
(C_{\epsilon,L}^{m})^2 H_{4}\left(\frac{Z_{\epsilon,i}}{(C_{\epsilon,L}^{m})^{\frac{1}{2}}}\right ) \quad &\text{for} \quad i=j
\end{cases},
\label{e141}
\end{align}
where $C_{\epsilon,L}^m$ denotes a renormalization counter-term defined by
\begin{align}
C_{\epsilon,L}^{m} &\eqdef \E^{\mu_{m,L}^{\otimes N} }Z_{\epsilon,1}(0)^{2}=L^{-2}\sum_{\xi \in \Lambda_{L}^{*} } \frac{ |\eta(\epsilon \xi)|^{2}}{m^{2}+|\xi|^{2}} \label{e122}
\end{align}
We use here the notation $:\;:_m$ to underline that the definition of the Wick renormalization is dependent on the mass parameter $m$, though sometimes will will forgo the subscript to ease notation.  It will be convenient to also have a short-hand notation for the Wick renormalized $\| \cdot \|_{\R^{N}}^{4}$ norm and to this end we introduce 
\begin{align}
:\|Z_{\epsilon}(x)\|^{4}_{\R^{N}}:_{m}&\eqdef\sum_{i,j=1}^{N}  :Z_{\epsilon,i}^{2}(x)Z_{\epsilon,j}^{2}(x) :_{m} \nonumber \\
&=\sum_{i \neq j}(Z_{\epsilon,i}^{2}-C_{\epsilon,L}^{m} )(Z_{\epsilon,j}^{2}-C_{\epsilon,L}^{m} )+\sum_{i=1}^{N}\big (Z_{\epsilon,i}^{4}-6C_{\epsilon,L}^{m}Z_{\epsilon,i}^{2}+3(C_{\epsilon,L}^{m})^{2} \big )
 \nonumber \\
&=\|Z_{\epsilon}(x)\|_{\R^{N}}^{4}-C_{\epsilon,L}^{m}(2N+4)\|Z_{\epsilon}(x)\|_{\R^{N}}^{2}+\big ( C_{\epsilon,L}^{m} \big )^{2}(N(N-1)+3N ) \label{e123}.
\end{align}

\subsection{Uniform in $N$ Ultraviolet Stability} \label{subsec:stab}
We now prove Lemma~\ref{lem:stabAction} and Lemma \ref{lem:partitionFunction}, which are crucial inputs for both the construction of the Linear Sigma model and the uniform in $N$ estimates needed later in Section \ref{subsec:Talagrand}.  In particular, these will be used to estimate the relative entropy of $\nu^{N}_{m,L}$, c.f. Definition \ref{def:LSMFinite}, with respect to $\mu_{m,L}^{\otimes N}$.  

Our first step is to establish moment bounds and stability estimates for suitable averages of \eqref{e122}, which should be thought of as the Wick renormalization of the quartic part of the classical action \eqref{e115}. In the singular SPDE language, these correspond to `stochastic estimates', and our main goals is to show that the bounds are \textit{uniform in both} $N$ and $\epsilon$. Although it is not entirely necessary, we prefer to write the proof in the language of Mallaivin calculus, as this point of view has proved to be robust to more singular settings, (c.f. \cite{furlan2019weak}, \cite{linares2024diagram},\cite{bailleul2024randommodelssingularspdes}).
\begin{lemma}\label{lem:stabAction}
Let $\varphi \in C^{\infty}(\Lambda_{L})$ and $p \geq 1$.  
The following uniform bound holds:
\begin{align}
\sup_{N \in \N, \epsilon>0 }\bigg \|\int_{\Lambda_{L}}\frac{1}{N}: \|Z_{\epsilon}(x)\|^{4}_{\R^{N}}:_{m} \varphi(x) dx \bigg \|_{L^{p}(d\mu_{m,L}^{\otimes N})} &\lesssim \|\varphi\|_{L^{2}(\Lambda_{L}) }  m^{-1} (p-1)^{2}.
 \label{e25}
\end{align}
Furthermore, for any  $\epsilon, \kappa>0$ and $\delta< \frac{1}{2}$ it holds
\begin{align}
&\bigg \|\int_{\Lambda_{L}}\frac{1}{N} \big (: \|Z_{\epsilon}(x)\|^{4}_{\R^{N}}:_{m}-:\|Z_{\kappa}(x)\|^{4}_{\R^{N}}:_{m} \big ) \varphi(x) dx \bigg \|_{L^{p}(d\mu_{m,L}^{\otimes N})} \nonumber \\
&\qquad\qquad \lesssim \|\varphi\|_{L^{2}(\Lambda_{L}) } |\epsilon-\kappa|^{\delta} m^{-(1+\delta) }(p-1)^{2}.\label{e26}
\end{align}
\end{lemma}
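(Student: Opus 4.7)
The strategy combines a direct second-moment computation on the fourth Wiener chaos with Gaussian hypercontractivity. By the identity \eqref{e123}, the random variable
\begin{equation*}
F_\epsilon \eqdef \int_{\Lambda_{L}}\frac{1}{N}:\|Z_{\epsilon}(x)\|^{4}_{\R^{N}}:_{m}\varphi(x) dx
\end{equation*}
belongs to the fourth homogeneous Wiener chaos of the isonormal Gaussian family on $H = H^{-1}_m(\Lambda_L)^N$ associated to $\mu_{m,L}^{\otimes N}$, since each summand in \eqref{e123} is, by construction, a fourth Hermite polynomial (or a product of two second Hermite polynomials in independent variables). Nelson's hypercontractivity inequality then yields $\|F_\epsilon\|_{L^p} \le (p-1)^{2} \|F_\epsilon\|_{L^2}$ for all $p \ge 2$, producing the $(p-1)^{2}$ factor on the right hand side of \eqref{e25}.

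The next step is to estimate $\|F_\epsilon\|_{L^2}$ directly, via Malliavin calculus (equivalently Stroock's formula \eqref{e41}), in which only the $l = 4$ term contributes for elements of the fourth chaos. Writing $G_\epsilon^m(x-y) \eqdef \E^{\mu_{m,L}^{\otimes N}}[Z_{\epsilon,1}(x) Z_{\epsilon,1}(y)]$ for the regularized covariance and using independence across components, the pair correlations
\begin{equation*}
\E^{\mu_{m,L}^{\otimes N}}\bigl[:Z_{\epsilon,i}^{2}(x) Z_{\epsilon,j}^{2}(x):_{m}\; :Z_{\epsilon,k}^{2}(y) Z_{\epsilon,l}^{2}(y):_{m}\bigr]
\end{equation*}
vanish unless the multi-set $\{i,j\}$ equals $\{k,l\}$, in which case they equal a combinatorial constant times $(G_\epsilon^m(x-y))^{4}$. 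Summing the diagonal contribution ($i=j=k=l$, combinatorial prefactor $24N$) and the off-diagonal contribution ($i \neq j$, $\{k,l\}=\{i,j\}$, prefactor $8N(N-1)$) gives a total of order $N^{2}$, so after dividing by $N^{2}$ from the normalization in $F_\epsilon$ one obtains
\begin{equation*}
\|F_\epsilon\|_{L^2}^{2} \lesssim \int_{\Lambda_L \times \Lambda_L} |\varphi(x)||\varphi(y)| (G_\epsilon^m(x-y))^{4} dx dy \lesssim \|\varphi\|_{L^2(\Lambda_L)}^{2} \|(G_\epsilon^m)^{4}\|_{L^1(\Lambda_L)}
\end{equation*}
by Young's convolution inequality. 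The uniform bound $\|(G_\epsilon^m)^{4}\|_{L^1(\Lambda_L)} \lesssim m^{-2}$ follows from the $L^4$-integrability of the logarithmic singularity at the origin and the exponential decay at scale $m^{-1}$, which is cleanest to see via the two-dimensional scaling $G^m(\cdot) = G^1(m\cdot)$. Combined with hypercontractivity, this establishes \eqref{e25}.

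For the difference estimate \eqref{e26}, the random variable $F_\epsilon - F_\kappa$ again lies in the fourth chaos and the same template applies. The analogous Wick pairing now produces a kernel of the shape $(G_\epsilon^m)^{4}+(G_\kappa^m)^{4}-2(G_{\epsilon,\kappa}^m)^{4}$, where the mixed covariance $G_{\epsilon,\kappa}^m$ has Fourier symbol $\eta(\epsilon\xi)\eta(\kappa\xi)/(m^2+|\xi|^2)$. Factoring out one difference of the form $G_\epsilon^m - G_{\epsilon,\kappa}^m$ (and its $\kappa$ counterpart) and invoking the Fourier multiplier bound $|\eta(\epsilon\xi) - \eta(\kappa\xi)| \lesssim |\epsilon-\kappa|^{\delta} |\xi|^{\delta}$ for $\delta < \frac{1}{2}$ extracts the required factor $|\epsilon-\kappa|^{\delta} m^{-\delta}$, while the remaining three Green's function factors are bounded as before to yield $m^{-1}$.

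The main obstacle is the combinatorial cancellation in the second paragraph: the quartic Wick polynomial in \eqref{e123} has a priori summands of order $N^{2}$, and it is essential to verify that only diagrams of order at most $N^{2}$ appear after squaring, so that the $N^{-2}$ normalization produces an $O(1)$ bound. The Malliavin/Wick viewpoint makes this transparent since the orthogonality of distinct Hermite products forces the index matching $\{i,j\} = \{k,l\}$, ruling out any $O(N^{3})$ or $O(N^{4})$ contribution; once this is confirmed, hypercontractivity and the standard two-dimensional Green's function bounds close the argument.
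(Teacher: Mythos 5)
Your proposal follows essentially the same strategy as the paper: observe that the normalized Wick quartic lives in the fourth Wiener chaos, compute its second moment via the chaos/Wick pairing (or equivalently Stroock's formula as the paper does), verify the combinatorial cancellation that makes the sum $O(N^{2})$ before the $N^{-2}$ normalization, control the resulting kernel $(G_\epsilon^m)^{4}$ via Young's convolution inequality, and then upgrade to $L^{p}$ by Nelson hypercontractivity. Your combinatorial count ($24N$ diagonal plus $8N(N-1)$ off-diagonal) is correct and identifies the same $O(1)$ structure the paper packages as $(1+\tfrac{2}{N})G_{\epsilon,\kappa}^{4}$.

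The one genuine difference is where you estimate the kernel. The paper bounds $\sup_{\xi}|\widehat{G_\epsilon^{4}}(\xi)|$ directly on the Fourier side via nested Young inequalities on $\ell^{p}(\Lambda_L^*)$ and a Riemann-sum comparison to $\int_{\R^{2}}(m^{2}+|\xi|^{2})^{-4/3}\,d\xi$; you instead bound $\|(G_\epsilon^m)^{4}\|_{L^{1}(\Lambda_L)}$ in physical space via the scaling $G^m(\cdot)=G^1(m\cdot)$. These are morally the same quantity, but your physical-space route requires two further observations you only gesture at: (i) the scaling identity holds cleanly on $\R^{2}$, not on the torus, so you need to periodize and argue the torus correction is harmless (or argue that the lattice sum is dominated by the $\R^{2}$ integral, which is exactly what the paper's Riemann-sum step does); and (ii) the smoothed kernel $G_\epsilon^m$ is not pointwise dominated by $G^m$, so $\|G_\epsilon^m\|_{L^{4}}$ does not follow trivially from $\|G^m\|_{L^{4}}$ --- one needs the uniform $L^{4}$-boundedness of the Fourier multiplier $\eta(\epsilon D)$, or simply work on the Fourier side as the paper does. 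Neither issue is fatal, but the Fourier route avoids both. Finally, in the difference estimate your bookkeeping of $\delta$ versus $2\delta$ appears off by a factor of two: the kernel $L^{1}$ bound controls $\|F_\epsilon - F_\kappa\|_{L^{2}}^{2}$, not the norm itself, so the multiplier bound must be $|\eta(\epsilon\xi)-\eta(\kappa\xi)|\lesssim|\epsilon-\kappa|^{2\delta}|\xi|^{2\delta}$ (as in the paper) to land on $|\epsilon-\kappa|^{\delta}m^{-(1+\delta)}$ after the square root; otherwise the single-difference factorization and the structure of your argument are correct.
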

\begin{proof}
We start by observing that by the definition of the Wick products \eqref{def:WickProd}, 
\begin{equation}
     : \|Z_{\epsilon}(x)\|^{4}_{\R^{N}}:_{m}=p^{N}(Z_{\epsilon}^{N}(x),\sqrt{C_{\epsilon,L}^{m}} ) \nonumber,
\end{equation}
where for $\sigma \in \R$, we denote by $p^{N}(\cdot,\sigma): \R^{N} \mapsto \R$ the polynomial
\begin{align}
z \in \R^{N} \mapsto p^{N}(z,\sigma) &\eqdef \frac{\sigma^4}{4}\left(\sum_{i  \neq j}^{N} H_{2}(z_{i}/\sigma )H_{2}(z_{j}/\sigma)+\sum_{i=1}^{N}H_{4}(z_{i}/\sigma)\right) \nonumber. 
\end{align}
Keeping in mind the definition \eqref{e69} of the regularized field, we note that $:\|Z_{\epsilon}(x)\|^{4}_{\R^{N}}:_{m}$ is a smooth random variable. In fact, $Z_{\epsilon}(x)=(Z_{\epsilon,i}.\eta_{x,\epsilon} )_{i=1}^{N}$, where $\eta_{\epsilon,x} \in C^{\infty}(\Lambda_{L})$ is defined through the Fourier inversion formula
\begin{equation}
    w \in \Lambda_{L} \mapsto \eta_{x,\epsilon}(w) \eqdef L^{-2}\sum_{\xi \in \Lambda_{L}^{*} }\eta(\epsilon \xi)e^{i\xi \cdot (x-w)} \nonumber. 
\end{equation}
For convenience, we define an approximate Green's function $G_{\epsilon,\kappa}$ for $x,y \in \Lambda_{L}$ by
\begin{equation}
 G_{\epsilon,\kappa}(x-y) \eqdef\langle \eta_{x,\epsilon}, \eta_{y,\kappa} \rangle_{H^{-1}_{m}(\Lambda_{L} ) }=L^{-2}\sum_{\xi \in \Lambda_{L}^{*}} \frac{\eta(\epsilon \xi) \eta(\kappa \xi)}{m^{2}+|\xi|^{2}}e^{i\xi \cdot (x-y)} \label{e68},
 \end{equation}
 where we used the definition \eqref{e102}, noting that $\widehat{\eta}_{x,\epsilon}(\xi)=e^{i \xi \cdot x}\eta(\epsilon \xi)$ for $\xi \in \Lambda_{L}^{*}$.  For $\kappa=\epsilon$, we will simply write $G_{\epsilon}$, and we notice that as $\epsilon \to 0$, $G_{\epsilon}$ approaches the Green's function of $(-\Delta+m^{2})^{-1}$ on $\Lambda_{L}$.

\medskip
 
\newcounter{SE} % proofstep = 0
\refstepcounter{SE} % increases value by 1
\newcounter{PSE} % proofstep = 0
\refstepcounter{PSE} % increases value by 1
\noindent {\sc Step} \arabic{SE}.\label{SE1} \refstepcounter{SE} For any $x,y \in \Lambda_{L}$ and $\epsilon,\kappa>0$, it holds
\begin{equation}
\E \left[\frac{1}{N}: \|Z_{\epsilon}(x)\|^{4}_{\R^{N}}:_{m}\frac{1}{N}: \|Z_{\kappa }(y)\|^{4}_{\R^{N}}:_{m} \right]=\left(1+\frac{2}{N} \right)G_{\epsilon,\kappa}(x-y)^{4} \label{e67}.
\end{equation}
\noindent {\sc Proof of Step} \arabic{PSE}.
We will prove \eqref{e67} using Stroock's formula \eqref{e41}, see \cref{app:A}. We compute the Malliavin derivatives of the above random variables.  To this end, using the recursion $H_{n}'=nH_{n-1}$ , we first observe that \footnote{An alternative way to prove this is to start with the identity $H_{4}(z)=H_{2}(z)^{2}-2H_{2}(\sqrt{2}z)$ to obtain $p^{N}(z,\sigma)=\frac{1}{4N} \big ( |z|_{\R^{N}}^{2}-\sigma^{2}N \big )^{2}-\frac{\sigma^{2}}{N}|z|_{\R^{N}}^{2}+\frac{\sigma^{4}}{4}$ and differentiate this polynomial directly.}
\begin{align}
% \partial_{i}p^{N}(z,\sigma)&=\bigg ( \frac{1}{N}|z|_{\R^{N}}^{2}-\sigma^{2}(1+\frac{2}{N}) \bigg )z_{i}. \label{e42} \\
\partial_{i}p^{N}(z,\sigma)&=\frac{\sigma^3}{4}\bigg ( 4H_{1}(z_i/\sigma)\sum_{r\ne i} H_2(z_r/\sigma)+4H_3(z_i/\sigma)\bigg), \label{e42}\\
% \partial_{ij}p^{N}(z,\sigma)&=\frac{1}{N} \big (2z_{i}z_{j}+|z|_{\R^{N}}^{2}\delta_{ij}) -\sigma^{2}(1+\frac{2}{N})\delta_{ij},\label{e43}\\
\partial_{ij}p^{N}(z,\sigma)&=\frac{\sigma^2}{4}\bigg ( 4\delta_{ij}\sum_{r\ne i} H_2(z_r/\sigma) +8H_{1}(z_i/\sigma)H_{1}(z_j/\sigma) (1-\delta_{ij}) \nonumber \\
& \quad \qquad \qquad \qquad \qquad \qquad+12\delta_{ij}H_2(z_i/\sigma)\bigg) \\
\partial_{ijk}p^{N}(z,\sigma)&=2\sigma \big (H_{1}(z_{k})\delta_{ij}+H_{1}(z_{i})\delta_{jk}+H_{1}(z_{j})\delta_{ik}\big ), \label{e44} \\
\partial_{ijk \ell}p^{N}(z,\sigma)&=2 \big (\delta_{k \ell}\delta_{ij}+ \delta_{i \ell}\delta_{jk}+\delta_{j \ell}\delta_{ik}\big ), \label{e45}
\end{align}
and any further derivative vanishes identically. 

In light of the definition \eqref{e47}, it follows immediately from the properties of the Hermite polynomials that  
\begin{equation}
 \E^{\mu_{m,L}^{\otimes N}} D^{\ell} : \|Z_{\epsilon}(x)\|^{4}_{\R^{N}}:_{m}=0 \quad \text{for} \quad \ell=0,1,2,3. \nonumber
\end{equation}
pointwise in $x$. Using the equation of the 4th derivarives \eqref{e45} we find
\begin{align}
 D^{4}:\|Z_{\epsilon}(x)\|^{4}_{\R^{N}}:_{m}&=2\sum_{i,j,k,\ell} \big ( \delta_{i \ell}\delta_{jk}+\delta_{j \ell}\delta_{ik}+\delta_{k \ell}\delta_{ij} \big ) \eta_{x,\epsilon}e_{i} \otimes \eta_{x,\epsilon}e_{j} \otimes \eta_{x,\epsilon}e_{k} \otimes \eta_{x,\epsilon}e_{\ell} \nonumber.
 \end{align}
We also note that for any indices $i,j,k,\ell$ and $i',j',k',\ell'$ it holds
\begin{align}
    &\langle D^{4}:\|Z_{\epsilon}(x)\|^{4}_{\R^{N}}:_{m}, D^{4}:\|Z_{\kappa }(y)\|^{4}_{\R^{N}}:_{m} \rangle_{H^{\otimes 4}} \nonumber \\
    &\qquad=\langle \eta_{x,\epsilon}e_{i} \otimes \eta_{x,\epsilon}e_{j} \otimes \eta_{x,\epsilon}e_{k} \otimes \eta_{x,\epsilon}e_{\ell}, \eta_{y,\kappa }e_{i'} \otimes \eta_{y,\kappa }e_{j'} \otimes \eta_{y,\kappa }e_{k'} \otimes \eta_{y,\kappa }e_{\ell} \rangle_{H^{\otimes 4}} \nonumber \\
&\qquad=\delta_{ii'}\delta_{jj'}\delta_{kk'}\delta_{\ell \ell'}\langle \eta_{x,\epsilon}, \eta_{y,\kappa } \rangle_{H^{-1}_{m}(\Lambda_{L})}^{4}\label{e53},
\end{align}
so we find from Stroock's formula \eqref{e41} that
\begin{equation}
\E^{\mu_{m,L}^{\otimes N}} \left[\frac{1}{N}: \|Z_{\epsilon}(x)\|^{4}_{\R^{N}}:_{m}\frac{1}{N}: \|Z_{\kappa }(y)\|^{4}_{\R^{N}}:_{m} \right]=\frac{1}{4!}A^{N}\langle \eta_{x,\epsilon},\eta_{y,\kappa} \rangle_{H^{-1}_{m}(\Lambda_{L})}^{4},
\end{equation}
where the constant $A^{N}=12\left(1+\frac{2}{N}\right)$,
% \begin{align}
% A^{N}&=\frac{4}{N^{2}}  \sum_{i=i',j=j',k=k',\ell=\ell'} \big ( \delta_{i \ell}\delta_{jk}+\delta_{j \ell}\delta_{ik}+\delta_{k \ell}\delta_{ij} \big )\big ( \delta_{i' \ell'}\delta_{j'k'}+\delta_{j' \ell'}\delta_{i'k'}+\delta_{k' \ell'}\delta_{i'j'} \big )\nonumber \\
% &=\frac{4}{N^{2}}  \sum_{i,j,k,\ell} \big ( \delta_{i \ell}\delta_{jk}+\delta_{j \ell}\delta_{ik}+\delta_{k \ell}\delta_{ij} \big )^{2} \nonumber \\
% &=\frac{4}{N^{2}}  \sum_{i,j,k,\ell}  \delta_{i \ell}\delta_{jk}+\delta_{j \ell}\delta_{ik}+\delta_{k \ell}\delta_{ij}+2\delta_{i \ell}\delta_{jk}\delta_{j \ell}\delta_{ik}+2\delta_{i \ell}\delta_{jk}\delta_{j \ell}\delta_{ik}+2\delta_{j \ell}\delta_{ik}\delta_{k \ell}\delta_{ij} \nonumber \\
% &=12\left(1+\frac{2}{N}\right), \nonumber
% \end{align}
which yields \eqref{e67}.

\medskip
 
{\sc Step} \arabic{SE}.\label{SE2} \refstepcounter{SE} In this step, we establish the uniform estimate \eqref{e25}.  As is typical for these types of arguments, we start with the second moment estimate and upgrade to higher moments using hyper-contractivity. To this end, notice that
\begin{align}
\E^{\mu_{m,L}^{\otimes N}} \bigg ( \int_{\Lambda_{L}}\frac{1}{N}: \|Z_{\epsilon}(x)\|^{4}_{\R^{N}}:_{m} \varphi(x) dx \bigg )^{2} 
&=\left(1+\frac{2}{N} \right)\int_{\Lambda_{L} \times \Lambda_{L}}  G_{\epsilon}(x-y)^{4}\varphi(x)\varphi(y)dxdy \nonumber \\
&\leq 3\sup_{ \xi  \in \Lambda_{L}^{*} }|\widehat{G_{\epsilon}^{4}}(\xi)| \cdot  \|\varphi\|_{L^{2}(\Lambda_{L})}^{2} \nonumber,
\end{align}
where we used  Parseval's identity.  We now apply Young's inequality for discrete convolutions in the form 
\begin{equation}
\ell^{\frac{4}{3}} \big (  \Lambda_{L}^{*}  \big )^{*4} \subset \ell^{2}(  \Lambda_{L}^{*} )^{*2} \subset \ell^{\infty}( \Lambda_{L}^{*} ).  
\end{equation}
This yields the inequality
\begin{align}
\sup_{ \xi  \in \Lambda_{L}^{*} }|\widehat{G_{\epsilon}^{4}}(\xi)| \leq \frac{1}{L^{2}}\sum_{\xi \in \Lambda_{L}^{*} }|\widehat{G_{\epsilon}^{2}}(\xi) |^{2} &\leq \bigg ( \frac{1}{L^{2}}\sum_{\xi \in \Lambda_{L}^{*} }|\widehat{G_{\epsilon}}(\xi) |^{\frac{4}{3}} \bigg )^{3} \nonumber \\
& {\color{blue} \leq} \bigg (\int_{\R^{2}} \frac{1}{(m^{2}+|\xi|^{2})^{\frac{4}{3}} } \bigg )^{3} {\color{blue}=}\frac{\pi}{3} m^{-2}, \label{e105}
\end{align}
yielding \eqref{e25} for $p=2$. By Holder's inequality, we can extend from $p=2$ to $p \in [1,2]$, while for $p>2$ we use hypercontractivity, see \cite[Section 1.4.3]{nualart2009malliavin}, to obtain $\|F\|_{L^{1+e^{2t}}(d \mu_{m,L}^{\otimes N} ) } \leq e^{4t}\|F\|_{L^{2}(d \mu_{m,L}^{\otimes N})}$ for $t>0$, so choosing $t=\frac{1}{2}\text{ln}(p-1)$ yields \eqref{e25}.  

\medskip

{\sc Step} \arabic{SE}.\label{SE10} \refstepcounter{SE} In this final step, we establish the stability estimate \eqref{e26}. To this end, notice that by Fubini's theorem and \eqref{e67} it holds 
\begin{align}
&\bigg \|\frac{1}{N}\int_{\Lambda_{L}} \big (: \|Z_{\epsilon}(x)\|^{4}_{\R^{N}}:_{m}-:\|Z_{\kappa}(x)\|^{4}_{\R^{N}}:_{m} \big ) \varphi(x) dx \bigg \|_{L^{2}(d\mu_{m,L}^{\otimes N})}^{2} \nonumber \\
&=\left(1+\frac{2}{N}\right) \iint_{\Lambda_{L} \times \Lambda_{L} }\big ( G_{\epsilon}(x-y)^{4}-G_{\kappa,\epsilon}(x-y)^{4} \big )\varphi(x)\varphi(y)dxdy \nonumber \\
&+\left(1+\frac{2}{N}\right) \iint_{\Lambda_{L} \times \Lambda_{L} }\big ( G_{\kappa}(x-y)^{4}-G_{\kappa,\epsilon}(x-y)^{4} \big )\varphi(x)\varphi(y)dxdy \nonumber \\
& \leq  3 \left(  \sup_{ \xi  \in \Lambda_{L}^{*} }|(\widehat{G_{\kappa,\epsilon}^{4}}-\widehat{G_{\epsilon}^{4}})(\xi)| +\sup_{ \xi  \in \Lambda_{L}^{*} }|(\widehat{G_{\kappa}^{4}}-\widehat{G_{\kappa,\epsilon}^{4}})(\xi)| \right)\|\varphi\|_{L^{2}(\Lambda_{L})}^{2} \label{e103}.
\end{align}
We now bound these two supremum norms by a variant of the argument leading to \eqref{e105}.  To this end, we write
\begin{equation}
(\widehat{G_{\epsilon}^{4}}-\widehat{G_{\kappa,\epsilon}^{4}})(\xi)=(\widehat{G}_{\epsilon}-\widehat{G}_{\epsilon,\kappa} )* \big (\widehat{G_{\epsilon}^{3}}+\widehat{G_{\epsilon,\kappa}^{3}}+\widehat{G_{\epsilon}^{2}}\widehat{G_{\epsilon,\kappa}}+\widehat{G_{\epsilon}}\widehat{G_{\epsilon,\kappa}^{2} }  \big ).
\end{equation}
Next observe the elementary bound: for $\delta \leq \frac{1}{2}$ we can estimate the $C^{2\delta}$ norm by interpolating $\|\eta\|_\infty$ and $\|\nabla \eta\|_\infty$ to obtain bounds independent of $m$
\begin{align}
\big |(\widehat{G}_{\epsilon}-\widehat{G}_{\epsilon,\kappa} )(\xi) \big |=\frac{|\eta(\epsilon \xi)-\eta(\kappa \xi) |\eta(\epsilon \xi) }{m^{2}+|\xi|^{2}} &\leq |\epsilon - \kappa|^{2\delta}|\nabla \eta|_{L^{\infty}}^{2\delta} \frac{|\xi|^{2\delta}}{m^{2}+|\xi|^{2}} \nonumber \\
& \lesssim |\epsilon - \kappa|^{2\delta}(m^{2}+|\xi|^{2})^{-(1-\delta)  }\label{e104}.
\end{align}
Combining this with the trivial bound $|G_{\epsilon}|,|G_{\epsilon,\kappa}| \leq (m^{2}+|\cdot|^{2} )^{-1}$ and using Young's convolution inequality twice for the cubic terms, we obtain for $\frac{1}{p}+\frac{1}{q}=1$ and $p>\frac{1}{1-\delta}$
\begin{align}
\sup_{ \xi  \in \Lambda_{L}^{*} }|(\widehat{G_{\epsilon}^{4}}-\widehat{G_{\kappa,\epsilon}^{4}})(\xi)|&\lesssim \|\widehat{G}_{\epsilon}-\widehat{G}_{\epsilon,\kappa}\|_{L^{p}} \big \|\widehat{G_{\epsilon}^{3}}+\widehat{G_{\epsilon,\kappa}^{3}}+\widehat{G_{\epsilon}^{2}}\widehat{G_{\epsilon,\kappa}}+\widehat{G_{\epsilon}}\widehat{G_{\epsilon,\kappa}^{2} }  \big \|_{L^{q}} \nonumber \\
%&\lesssim |\epsilon-\kappa|^{2\delta} \|(m+|\cdot|^{2} )^{-(1+\delta)}\|_{L^{p}} \big ( \|\widehat{G}_{\epsilon}\|_{L^{\frac{3q}{1+2q} }}^{3}+\|\widehat{G}_{\epsilon,\kappa}\|_{L^{\frac{3q}{1+2q} }}^{3} +\|\widehat{G}_{\epsilon}\|_{L^{\frac{3q}{1+2q} }}^{2}\|\widehat{G}_{\epsilon,\kappa}\|_{L^{\frac{3q}{1+2q} }}+\|\widehat{G}_{\epsilon}\|_{L^{\frac{3q}{1+2q} }} \|\widehat{G}_{\epsilon,\kappa}\|_{L^{\frac{3q}{1+2q} }}^{2}\big ) \nonumber \\
&\lesssim |\epsilon-\kappa|^{2\delta} \|(m^{2}+|\cdot|^{2} )^{-(1-\delta)}\|_{L^{p}(\Lambda_{L}^{*})}  \|(m^{2}+|\cdot|^{2})^{-1} \|_{L^{\frac{3q}{1+2q} }(\Lambda_{L}^{*})}^{3} \nonumber \\
%&= |\epsilon-\kappa|^{2\delta} \|(m+|\cdot|^{2} )^{-1}\|_{L^{p(1-\delta) }(\T_{L}^{*})}^{1-\delta}  \|(m+|\cdot|^{2})^{-1} \|_{L^{\frac{3q}{1+2q} }(\T_{L}^{*})}^{3} \nonumber \\
& \lesssim |\epsilon-\kappa|^{2\delta} m^{2(1-\delta-\frac{1}{p})}m^{-2(1+\frac{1}{q})}=|\epsilon-\kappa|^{2\delta} m^{-2(1+\delta)} \nonumber.
\end{align}

The second term in \eqref{e103} is handled in an identical way by applying \eqref{e104} with the roles of $\epsilon$ and $\kappa$ exchanged.  This concludes the proof of \eqref{e26} for $p=2$, and the extension to general $p$ is argued exactly as for \eqref{e25}, based on hypercontractivity. 
\end{proof}
 We now turn to the uniform bounds for the partition function, which are established using the approach of Nelson \cite{nelson1966quartic}, see also the sketch in Kupiainen \cite{kupiainen19801} which highlights the importance of the lower bound \eqref{e38}.  We will take care to track the dependence of the estimate on the volume, mass, and coupling constant.
\begin{lemma}\label{lem:partitionFunction}
Let $\Lambda \subset (-\frac{L}{2},\frac{L}{2}]^{2}$. There exist a constant $C(\lambda,m,|\Lambda| ) \geq 1$ independent of $L$ and $N$ such that 
\begin{equation}
1 \leq \E^{\mu_{m,L}^{\otimes N}}\left[\exp \bigg (-\frac{\lambda}{N} \int_{\Lambda} : \|Z_{\epsilon}(x)\|^{4}_{\R^{N}}:_{m}dx \bigg )\right] \leq C(\lambda,m,|\Lambda|), \label{e107}
\end{equation}
uniformly in $\epsilon<m^{2}$ and $N \in \N$. Furthermore, for $|\Lambda|=1$ and $\lambda \geq e$, it holds
\begin{equation}
\log C(\lambda,m,|\Lambda|) \lesssim \lambda \big (  |\log m| \vee(\log \lambda)^{2} \big )\label{e108}.
\end{equation}
\end{lemma}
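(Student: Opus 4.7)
The inequality $\geq 1$ follows immediately from Jensen's inequality applied to the convex map $x \mapsto e^{-x}$, together with the identity $\E^{\mu_{m,L}^{\otimes N}}[\int_\Lambda \frac{1}{N}:\|Z_\epsilon(x)\|^{4}_{\R^{N}}:_m dx] = 0$. The latter can be read off from \eqref{e123} using the Isserlis computations $\E \|Z_\epsilon(x)\|^{2}_{\R^{N}} = N C_{\epsilon,L}^{m}$ and $\E \|Z_\epsilon(x)\|^{4}_{\R^{N}} = (N^{2}+2N)(C_{\epsilon,L}^{m})^{2}$, which produce an exact cancellation of the three terms on the right-hand side of \eqref{e123}.

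\textbf{Upper bound, main strategy.} The starting point is the pointwise identity obtained by completing the square in \eqref{e123}:
$$\frac{1}{N}:\|Z_\epsilon(x)\|^{4}_{\R^{N}}:_m = \frac{1}{N} \bigl( \|Z_\epsilon(x)\|^{2}_{\R^{N}} - (N+2)C_{\epsilon,L}^{m} \bigr)^{2} - \frac{2(N+2)(C_{\epsilon,L}^{m})^{2}}{N}.$$
This yields the a.s.\ uniform-in-$N$ pointwise lower bound $V_\epsilon \eqdef \frac{\lambda}{N} \int_\Lambda :\|Z_\epsilon\|^{4}:_m dx \geq -4\lambda |\Lambda| (C_{\epsilon,L}^{m})^{2}$, which is the estimate \eqref{e38} emphasized in \cite{kupiainen19801}. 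Since $C_{\epsilon,L}^{m}$ diverges only logarithmically as $\epsilon \to 0$, while Lemma \ref{lem:stabAction} gives $\epsilon$-uniform control of the $L^{p}$ moments of $V_\epsilon$, we can combine the two via a Nelson-type decomposition. Concretely, for parameters $M,p > 0$ we split
$$\E^{\mu_{m,L}^{\otimes N}} e^{-V_\epsilon} \leq e^{M} + \exp\bigl(4\lambda |\Lambda|(C_{\epsilon,L}^{m})^{2}\bigr)\, \P(V_\epsilon < -M),$$
and bound the tail via Chebyshev and Lemma \ref{lem:stabAction} by $(C\lambda m^{-1} |\Lambda|^{1/2}(p-1)^{2} / M)^{p}$. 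Taking $p$ to grow proportionally to $\lambda |\Lambda| (C_{\epsilon,L}^{m})^{2}$, and $M$ correspondingly, the tail factor compensates the pointwise blow-up and the whole right-hand side becomes bounded uniformly in $\epsilon$.

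\textbf{Main obstacle.} The delicate point is that a single-scale application of Chebyshev with the hypercontractive moments from Lemma \ref{lem:stabAction} yields only Weibull-$1/2$ tails for $V_\epsilon$, which are marginal with respect to the $(C_{\epsilon,L}^{m})^{2}$ divergence in the pointwise bound. To close the balance uniformly in $\epsilon < m^{2}$, I expect one has to refine the above splitting along a dyadic sequence of intermediate UV cutoffs $\epsilon = \epsilon_{0} < \epsilon_{1} < \cdots < \epsilon_{K} \sim m^{-2}$, decomposing $V_\epsilon$ telescopically by means of the stability estimate \eqref{e26} and applying the splitting at each scale. The condition $\epsilon < m^{2}$ is what guarantees that the top scale is $\sim m^{-2}$, at which point $C_{\epsilon_K,L}^{m}$ is of order $|\log m|$, independent of $\epsilon$, and the sum of the per-scale contributions telescopes to a finite bound.

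\textbf{Scaling in $\lambda$ and general $\Lambda$.} For the quantitative bound \eqref{e108} on the unit square, we track the $\lambda$ dependence in the Chebyshev step: choosing $p$ of order $\log \lambda$ when $\lambda \geq e$ and carrying through the optimization yields $\log C \lesssim \lambda(|\log m| \vee (\log \lambda)^{2})$, matching the classical $N=1$ scaling of \cite{guerra1976boundary}. For general $\Lambda \subset (-L/2,L/2]^{2}$, we tile $\Lambda$ by unit squares and apply a chessboard-type estimate (see Appendix \ref{app:C}) which bounds $\E \exp(-\frac{\lambda}{N}\int_\Lambda :\|Z_\epsilon\|^{4}:_m dx)$ by a product of unit-square expectations, giving $C(\lambda,m,|\Lambda|) \leq C(\lambda,m,1)^{|\Lambda|}$ and in particular an $L$-independent bound.
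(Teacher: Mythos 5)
Your lower bound (Jensen plus the Isserlis computation of $\E[:\|Z_\epsilon\|^4:_m]=0$), the completing-the-square identity, and the consequent pointwise lower bound $V_\epsilon\gtrsim -\lambda|\Lambda|(C_{\epsilon,L}^m)^2$ are all correct and match the paper's \eqref{e38}. You also correctly diagnose the obstacle: a naive single-cutoff Chebyshev bound from \eqref{e25} gives at best Weibull-$1/2$ tails for $V_\epsilon$, and the factor $(p-1)^{2p}$ then makes the exponential $e^{M}$ term blow up as $\epsilon\to 0$ if one chooses $p$ and $M$ to compensate the $(C_{\epsilon,L}^m)^2\sim|\log\epsilon|^2$ divergence.

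However, the remedy you propose---a dyadic telescope over intermediate cutoffs with an endpoint ``$\epsilon_K\sim m^{-2}$''---is not what closes the argument, and as stated it is a conjecture rather than a proof. The paper's Nelson-type mechanism is a \emph{single-step} comparison with one auxiliary cutoff $\kappa$, but $\kappa$ is chosen \emph{adaptively in the tail level} $M=(\log K)^2$: one picks $\kappa(K)$ so that the deterministic lower bound $F_\kappa^N\geq -C|\log\kappa|^2$ forces the event $\{F_\epsilon^N\leq -(\log K)^2\}$ to lie inside $\{F_\kappa^N-F_\epsilon^N\geq 1\}$. Chebyshev is then applied to the \emph{difference} at the \emph{fixed} level $1$, not to $V_\epsilon$ at level $M$; the gain over your estimate comes entirely from the factor $|\epsilon-\kappa|^\delta\leq\kappa^\delta$ in the stability estimate \eqref{e26}, which decays super-exponentially in the Chebyshev power once one takes $p=\kappa^{-\delta/6}$. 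This yields the much stronger tail $\mu_{m,L}^{\otimes N}(F_\epsilon^N\leq -(\log K)^2)\lesssim\exp(-K^\alpha)$, i.e.\ doubly-exponential in $\sqrt M$, against which the Laplace integral $\int e^{\lambda M}\P(F_\epsilon^N\leq-M)\,dM$ converges. A telescope over dyadic scales with a fixed endpoint independent of the level does not reproduce this gain, since the sum of the per-scale improvements $\sum_j\epsilon_j^\delta$ is of the same order as a single jump and the top-scale pointwise bound would again have to be traded against $e^M$. (Your $\epsilon_K\sim m^{-2}$ also appears to be a sign error; in any event there is no fixed top scale in the paper's argument.)

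On the extension to general $\Lambda$: the paper does \emph{not} tile $\Lambda$ by unit squares or invoke a chessboard/checkerboard estimate here. The proof runs directly for arbitrary $\Lambda\subset(-L/2,L/2]^2$, with the $|\Lambda|$-dependence entering through the constant $C=C_1|\Lambda|$ in the deterministic bound \eqref{e38} and the factor $\|\mathbbm{1}_\Lambda\|_{L^2}=|\Lambda|^{1/2}$ in \eqref{e25}/\eqref{e26}; the checkerboard estimate is reserved for the later volume-optimal bounds in Lemma~\ref{lem:partitionIV} and Proposition~\ref{prop:relEntropyDensity}. Your chessboard route would also face the measurability issue handled in Lemma~\ref{lemma:CheckerboardApprox}, and would produce a $\lambda\mapsto p\lambda$ degradation from the $L^p$ norm that the direct argument avoids.
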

\begin{proof}
Let us denote by $F_{\epsilon}^{N}$ the random variable
\begin{equation}
F_{\epsilon}^{N} \eqdef \int_{\Lambda}\frac{1}{N}: \|Z_{\epsilon}(x)\|^{4}_{\R^{N}}:_{m}dx,
\end{equation}
and note that $\E^{\mu_{m,L}^{\otimes N}} F_{\epsilon}^{N}=0$, so that the lower bound in \eqref{e107} follows immediately from Jensen's inequality in the form
\begin{equation}
     1=\text{exp}(-\lambda \E^{\mu_{m,L}^{\otimes N}}\left[ F_{\epsilon}^{N}\right]) \leq \E^{\mu_{m,L}^{\otimes N}} \left[\text{exp}(-\lambda F_{\epsilon}^{N})\right] \nonumber.
\end{equation}
For the upper bound, we use Nelson's argument, following the exposition of \cite{simon2015p}. The crux of the argument is to prove the following claim: there exists $K_{0} \geq 1$ such that for $K \geq K_{0}$ it holds
\begin{equation}
\mu_{m,L}^{\otimes N}\left[F^{N}_{\epsilon} \leq -(\text{log}K )^{2} \right] \lesssim \text{exp}(-K^{\alpha })  \quad \text{for} \quad K \geq K_{0} \label{e27}, 
\end{equation}
where $\alpha>0$ is a universal constant.  We now complete the proof of \eqref{e107} assuming \eqref{e27}.  Indeed, \eqref{e27} implies 
$\mu_{m,L}^{\otimes N}(V_{\epsilon}^{N} \leq -M ) \leq \text{exp}(-\text{exp}(\alpha M^{\frac{1}{2}}))$ for $M \geq \log^{2} K_{0} $, so    
\begin{align}
\E^{\mu_{m,L}^{\otimes N}}\left[ \text{exp}(-\lambda F_{\epsilon}^{N})\right]&=\int_{-\infty}^{\infty} \text{exp}(\lambda M)\mu_{m,L}^{\otimes N}(F_{\epsilon}^{N} \leq -M) dM \nonumber \\ 
& \leq \int_{-\infty}^{\log^{2}K_{0} }\exp(\lambda M)+\int_{\log^{2}K_{0}}^{\infty} \exp \big (\lambda M-\exp(\alpha \sqrt{M}) \big )dM \label{e121}.
\end{align}
As both integrals are clearly finite, the proof is complete.
 
\medskip

It remains only to give the argument for \eqref{e27} for suitable $\alpha$ and $K_{0}$, then insert these parameters into \eqref{e106}. The starting point is the lower bound \eqref{e38} on the action,  which is \textit{uniform in $N$}.
A short calculation using the definition \eqref{e37} reveals that
\begin{align}
    F_{\epsilon}^{N}&=\frac{1}{4N}\int_{\Lambda} \big ( \|Z_{\epsilon}(x)\|_{\R^{N}}^{2}-C_{\epsilon,L}^{m}(N+2) \big )^{2} dx -\frac{ (C_{\epsilon,L}^{m} )^{2}}{4}\big (1+\frac{2}{N} \big )|\Lambda| \nonumber \\
    & \geq -\frac{(C_{\epsilon,L}^{m})^{2}}{4}\left(1+\frac{2}{N} \right)|\Lambda| \label{e38}. 
\end{align}
Notice also that $C_{\epsilon,L}^{m} \lesssim \log(m^{2}+\epsilon^{-1})-\log m^{2} \lesssim |\log \epsilon|$ uniformly in $L \geq 1$ since $\epsilon<m^{2}$.   Hence, it follows that $F_{\kappa}^{N} \geq -C |\text{log}\kappa |^{2}$, where $C=C_{1}|\Lambda |$ for some universal constant $C_{1}$, which implies
\begin{align}
\mu_{m,L}^{\otimes N}(-F_{\epsilon}^{N} \geq (\text{log}K)^{2} )&=\mu_{m,L}^{\otimes N}(F_{\kappa }^{N}-F_{\epsilon}^{N} \geq F_{\kappa}^{N}+ (\text{log}K)^{2} ) \nonumber \\
&\leq \mu_{m,L}^{\otimes N}( F_{\kappa }^{N}-F_{\epsilon}^{N} \geq (\text{log}K )^{2}-2C |\text{log}\kappa |^{2} ). \nonumber 
\end{align}
We now choose $\kappa$ so that $2C|\text{log}\kappa|^{2}=|\text{log}K|^{2}+1$, that is $\kappa =\text{exp}(-\frac{1}{\sqrt{2C}}(\log^{2} K+1 )^{\frac{1}{2}} )$.  Note that for $1>\epsilon>\kappa$, it holds $|\text{log}K|^{2} \geq C|\log \epsilon|^{2}+\big (C|\log \kappa|^{2}-1 \big ) \geq C|\log \epsilon|^{2}$, provided $\kappa \leq \exp(-\frac{1}{\sqrt{C}} )$, which is guaranteed if we choose, for example, $\text{log}K_{0} \geq 1$.  This ensures that the event $\{-F_{\epsilon}^{N} \geq (\text{log}K)^{2}\}$ is a null set for $\mu_{m,L}^{\otimes N}$ unless $\epsilon \leq \kappa$, which we assume from now on.  Hence, we can apply \eqref{e26} from Lemma \ref{lem:stabAction}
together with Chebyshev's inequality to deduce that
\begin{align}
    \mu_{m,L}^{\otimes N}(-F_{\epsilon}^{N} \geq (\text{log}K)^{2} ) &\leq \mu_{m,L}^{\otimes N}(F_{\kappa}^{N}-F_{\epsilon}^{N} \geq 1) \nonumber \\
    &\leq  \E^{\mu_{m,L}^{\otimes N}} \big|F_{\kappa }^{N}-F_{\epsilon}^{N} \big |^{p} 
    %\lesssim (p-1)^{2p}|\epsilon-\kappa|^{\delta p} 
    \leq C_{2}m^{-(1+\delta)p} (p-1)^{2p}|\kappa-\epsilon|^{\delta p}. 
\end{align}
We now choose $p=\kappa^{-\frac{\delta}{6}}$, so that $(p-1)^{2p} \leq p^{2p} \leq \kappa^{-\frac{\delta}{3}p}$ and hence for $K \geq 1$, we have 
\begin{align}
    \mu_{m,L}^{\otimes N}(-F_{\epsilon}^{N} \geq (\text{log}K)^{2} ) &\leq  C_{2}m^{-(1+\delta)p}\kappa^{\frac{2\delta}{3}p} \nonumber \\
    &=C_{2}\text{exp} \bigg ( -\kappa^{-\frac{\delta}{6}} \big ((1+\delta)\log m +\frac{2\delta }{3} |\text{log}\kappa| \big )  \bigg ). \nonumber
    %& \lesssim \text{exp} \bigg (-K^{\frac{\delta}{6\sqrt{C}}} \big( \frac{2\delta }{3}C^{\frac{1}{2}} |\text{log}K|+\frac{1}{2}(1+\delta)\log m \big ) \bigg ).
\end{align}
Choosing $\delta=\frac{1}{4}$ and $K_{0} \geq e$ such that
$\frac{5}{8}\log m^{2}+\frac{1}{6}\frac{1}{\sqrt{2C}}(\log^{2}K_{0}+1)^{\frac{1}{2}} \geq 1$, we obtain \eqref{e27} with $\alpha=\frac{1}{24 \sqrt{2C}}$, taking into account the lower bound $\kappa^{-1} \geq K^{\frac{1}{\sqrt{2C}}}$.  The constraint on $K_{0}$ is immediate if $\frac{5}{8}\log m^{2} \geq 1$, and otherwise we set 
\begin{equation}
\log K_{0}=6 \sqrt{2C}(1-\frac{5}{8}\log m^{2}) \leq 12 \sqrt{2C}\bigg (1 \vee\frac{5}{8}|\log m^{2}| \bigg),
\end{equation}
completing the argument for \eqref{e27}.

\medskip

We now turn to the argument for \eqref{e108}. Let us show first that \eqref{e121} implies
\begin{equation}
\E^{\mu_{m,L}^{\otimes N}} \text{exp}(-\lambda F_{\epsilon}^{N}) \lesssim \lambda^{-1}\text{exp}(\lambda \log^{2}K_{0})+\frac{50\lambda}{\alpha^{4}} \exp \left(\frac{16\lambda}{\alpha^{2}} \big ( \text{log} \frac{\lambda}{\alpha^{2}} \big )^{2}  \right)   \label{e106}.
\end{equation}
The first term in \eqref{e121} evaluates exactly to $\lambda^{-1}\text{exp}(\lambda \log^{2}K_{0})$, while for the second term we change variables $\alpha^{2}M \mapsto M$ and use Lemma \ref{lem:elementraryI} to obtain for $\lambda \geq e$
\begin{align}
\int_{\log^{2}K_{0}}^{\infty}\text{exp}( \lambda M-\text{exp}(\alpha M^{\frac{1}{2}})) &\leq \frac{1}{\alpha^{2}}\int_{0}^{\infty}\text{exp}( \frac{\lambda}{\alpha^{2}} M-\text{exp}( M^{\frac{1}{2}}))dM. \nonumber \\
& \leq \frac{50\lambda}{\alpha^{4}} \exp \left(\frac{16\lambda}{\alpha^{2}} \big (\text{log} \frac{\lambda}{\alpha^{2}} \big )^{2}  \right) \nonumber.
\end{align}
Recalling now the choices $\alpha^{-1}=24 \sqrt{2C_{1}|\Lambda|}$ and using that $\log^{2}K_{0} \lesssim |\Lambda|(1 \vee |\log m|)$, we obtain \eqref{e106} provided $|\Lambda|=1$.
\end{proof}

\subsection{Large N Limit}\label{subsec:Talagrand}
We start with a rigorous definition of the measure $\nu^{N}_{m,L}$.  The measure will be obtained as a limit as the ultraviolet cutoff $\epsilon$ introduced in \eqref{e69}, is sent to zero.  We start by introducing the partition function for a fixed valued of the cutoff, denoted $\mathbf{Z}_{m,\epsilon,L}^{N,\lambda}$ defined by
\begin{equation}
\mathbf{Z}_{m,L,\epsilon}^{N,\lambda}\eqdef 
\E^{\mu^{\otimes N}_{m,L}} \left[\text{exp} \bigg (-\frac{\lambda}{4N}\int_{\Lambda_{L}}:\|Z_{\epsilon}(x)\|_{\R^{N}}^{4}:dx\bigg )\right],
\end{equation}
where we also indicate the dependence on $\lambda$ (which we don't do for the measure $\nu^{N}_{m,L}$) as it simplifies the notation slightly in the proof below.  
\begin{definition} \label{def:LSMFinite}
The  Linear Sigma Model on $\Lambda_{L}$ corresponding to a Wick renormalization of the classical potential \eqref{e115}, is the unique probability measure $\nu^{N}_{m,L}$ such that for any continuous bounded observable $F: \mathcal{D}'(\Lambda_{L})^{N} \mapsto \R$
\begin{align}
&\int_{\mathcal{D}'(\Lambda_{L})^{N} } F(\Phi)d \nu^{N}_{m,L}(\Phi) \nonumber \\
&\eqdef \lim_{\epsilon \to 0} \frac{1}{\mathbf{Z}_{m,L,\epsilon}^{N,\lambda} } \int_{\mathcal{D}'(\Lambda_{L})^{N} }F(Z_{\epsilon}) \text{exp} \bigg (-\frac{\lambda}{4N}\int_{\Lambda_{L}}:\|Z_{\epsilon}(x)\|_{\R^{N}}^{4}:dx\bigg )  d \mu_{m,L}^{\otimes N}(Z). \nonumber 
\end{align}
\end{definition}
Note that our convention is to define the Wick renormalization relative to the variance of $\mu_{m,L}^{\otimes N}$, recalling \eqref{e122} and \eqref{e123}.  The existence and uniqueness of the measure $\nu^{N}_{m,L}$ for general $N$ is essentially classical, though not carried out in full details in the literature to our knowledge.  A  brief sketch of the construction is given in \cite{kupiainen19801}.  In fact, it follows from the estimates in Section \ref{subsec:stab}, as we explain in more detail in Appendix \ref{app:C}.

We now turn to the main result of this section, which proves that as $N \to \infty$, the measure $\nu^{N}_{L}$ behaves like $\mu_{m,L}^{\otimes N}$, and in particular all marginal distributions converge to $\mu_{m,L}$.  To quantify the convergence, we consider the topology induced by the following metric (which is possibly infinite)
\begin{equation}
    W_{H_m^1(\Lambda_{L})^{N}}(\mu_{1},\mu_{2} ) \eqdef \inf \bigg \{  \frac{1}{N}\sum_{j=1}^{N}\E \|X_{1,j}-X_{2,j}\|^{2}_{H_m^1(\Lambda_{L} )}   \bigg| \, \, (X_{i,j})_{j=1}^{N} \sim \mu_{i}, \, \, i=1,2 \bigg \}^{\frac{1}{2}}, \label{e33}
\end{equation}
where for $m>0$ we define 
$$
\|Y\|^{2}_{H_m^1(\Lambda_{L} )}\eqdef\|\nabla Y\|^{2}_{L^2(\Lambda_{L} )}+m^{2}\|Y\|^{2}_{L^2(\Lambda_{L})}.
$$
\begin{remark} \label{rem:subadditivity}
The normalization by $\frac{1}{N}$ in \eqref{e33} is very natural from the point of view of large $N$ limits, particularly in light of the following observation.  Denote by $P_{i}: \mu \in \mathcal{P}(\mathcal{S}'(\Lambda_{L})^{N}) \mapsto \mathcal{P}(\mathcal{S}'(\Lambda_{L}))$ the projection onto the law of the $i^{th}$ component.   If $\mu_{1}$ and $\mu_{2}$ are symmetric probability distributions, then the following inequality holds for all $N$ and each component $i \in [N]$
\begin{equation}
W_{H_m^1(\Lambda_{L})^{N}}(\mu_{1},\mu_{2} )^{2} \geq W_{H_m^1(\Lambda_{L}) }(P_{i}\mu_{1},P_{i}\mu_{i} )^{2} \label{e142}.
\end{equation}
Indeed, by definition of the Wasserstein distance in terms of optimal couplings, it holds
\begin{equation}
W_{H_m^1(\Lambda_{L})^{N}}(\mu_{1},\mu_{2} )^{2} \geq \frac{1}{N}\sum_{j=1}^{N}W_{H_m^1(\Lambda_{L}) }(P_{j}\mu_{1},P_{j}\mu_{2} )^{2}.
\end{equation}
Using that $P_{j}\mu \sim P_{i}\mu$ for all $j \in [N]$, the inequality \eqref{e142} follows.  Note that this argument does not rely on the choice of the Banach space $H^{1}_{m}(\Lambda_{L})$. 
The above scaling is also the natural one from the point of de Finetti/Hewitt–Savage convergence of probability measures on $\mathcal{S}'(\Lambda_{L})^{N}$ as $N \to \infty$, see for example \cite{delgadino2023phase} for more discussion on this point.
\end{remark}
We now proceed to the main result of this section.
\begin{theorem}\label{thm1}
For any $\lambda \geq0$, $m>0$, there exists a constant $C(m,\lambda,L)$ independent of $N$ such that
\begin{equation}
        W_{H_m^1(\Lambda_{L})^{N}}(\nu^{N}_{m,L},\mu^{\otimes N}_{m,L} ) \leq CN^{-\frac{1}{2}}. \label{e32}
\end{equation}
\end{theorem}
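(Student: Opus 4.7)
The plan is to combine the Feyel--\"Ust\"unel extension of Talagrand's transportation inequality (stated in Section \ref{sec:TAL}) with the uniform-in-$N$ Gaussian estimates of Section \ref{subsec:stab}. Since the covariance of the massive GFF $\mu_{m,L}$ is $(-\Delta+m^{2})^{-1}$, its Cameron--Martin space is $H^{1}_{m}(\Lambda_{L})$, and hence the Cameron--Martin space of the product $\mu^{\otimes N}_{m,L}$ is $H^{1}_{m}(\Lambda_{L})^{N}$ equipped with the natural sum-of-squares norm. Talagrand's inequality applied to $\nu^{N}_{m,L}\ll\mu^{\otimes N}_{m,L}$, divided by $N$ to match the $1/N$ normalization in \eqref{e33}, would give
\begin{equation}
W_{H^{1}_{m}(\Lambda_{L})^{N}}^{2}(\nu^{N}_{m,L}, \mu^{\otimes N}_{m,L}) \leq \frac{2}{N}\,\Ent(\nu^{N}_{m,L}\,|\,\mu^{\otimes N}_{m,L}),
\end{equation}
reducing the theorem to showing $\Ent(\nu^{N}_{m,L}\,|\,\mu^{\otimes N}_{m,L})\leq C(m,\lambda,L)$ uniformly in $N$.

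To bound the entropy I would work at the regularized level $\epsilon>0$, where the Radon--Nikodym density is explicit: writing $H^{N}_{\epsilon}\eqdef\frac{\lambda}{4N}\int_{\Lambda_{L}}:\|Z_{\epsilon}(x)\|^{4}_{\R^{N}}:_{m}dx$,
\begin{equation}
\Ent(\nu^{N,\epsilon}_{m,L}\,|\,\mu^{\otimes N}_{m,L}) = -\log\mathbf{Z}^{N,\lambda}_{m,L,\epsilon}-\E^{\nu^{N,\epsilon}_{m,L}}[H^{N}_{\epsilon}].
\end{equation}
The first term is non-positive by $\mathbf{Z}^{N,\lambda}_{m,L,\epsilon}\geq 1$ (Lemma \ref{lem:partitionFunction}). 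For the second, transferring back to the Gaussian reference via the density and applying Cauchy--Schwarz yields
\begin{equation}
|\E^{\nu^{N,\epsilon}_{m,L}}[H^{N}_{\epsilon}]| \leq \frac{1}{\mathbf{Z}^{N,\lambda}_{m,L,\epsilon}}\sqrt{\E^{\mu^{\otimes N}_{m,L}}[(H^{N}_{\epsilon})^{2}]}\,\sqrt{\E^{\mu^{\otimes N}_{m,L}}[e^{-2H^{N}_{\epsilon}}]}.
\end{equation}
Lemma \ref{lem:stabAction} with $p=2$ and a constant test function bounds the first factor uniformly in $N$ and $\epsilon$; Lemma \ref{lem:partitionFunction} applied with coupling constant $2\lambda$ bounds the second factor uniformly in $N$ and $\epsilon$; and the denominator is at least $1$. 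Together these give the desired uniform entropy bound at the regularized level.

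Combining the two steps yields $W_{H^{1}_{m}(\Lambda_{L})^{N}}(\nu^{N,\epsilon}_{m,L},\mu^{\otimes N}_{m,L})\lesssim N^{-1/2}$ uniformly in $\epsilon>0$, and the bound passes to the limit $\epsilon\to 0$ via the construction of $\nu^{N}_{m,L}$ as the $\epsilon\to 0$ limit of $\nu^{N,\epsilon}_{m,L}$ together with lower semicontinuity of $W_{2}$. The main conceptual subtlety I expect is the control of $\E^{\nu^{N}}[H^{N}]$: since the Wick renormalized quartic $:\|Z\|^{4}_{\R^{N}}:_{m}$ has no pointwise lower bound independent of the cutoff, this signed expectation is a priori not obviously finite as $\epsilon\to 0$, and it is precisely the combination of the uniform second-moment bound of Lemma \ref{lem:stabAction} and the exponential moment bound of Lemma \ref{lem:partitionFunction}, both relying on the $1/N$ rescaling built into $H^{N}_{\epsilon}$, that renders the entropy finite with a constant independent of $N$.
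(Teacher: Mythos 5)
Your proposal is correct and follows essentially the same route as the paper: reduce to a uniform-in-$N$ relative entropy bound via the Feyel--\"Ust\"unel/Talagrand inequality (Proposition \ref{prop:FreeFieldTalagrand}), bound the entropy at the regularized level by dropping $-\log\mathbf{Z}$ using $\mathbf{Z}\geq 1$ and Cauchy--Schwarzing the signed expectation against $(\mathbf{Z}^{N,2\lambda})^{1/2}$ and $\|F^{N}_{L,\epsilon}\|_{L^{2}}$, then invoke Lemmas \ref{lem:stabAction} and \ref{lem:partitionFunction}. The only cosmetic difference is that the paper passes to the limit $\epsilon\to 0$ via lower semicontinuity of the relative entropy before applying Talagrand, whereas you apply Talagrand at fixed $\epsilon$ and pass to the limit via lower semicontinuity of $W_{2}$; both are equally valid.
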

\begin{remark}
Note that in light of \eqref{e142}, taking into account the symmetry of both $\nu_{m,L}^{N}$ and $\mu_{m,L}^{\otimes N}$, \eqref{e32} also implies
\begin{equation}
    W_{H_m^1(\Lambda_{L}) }(P_{i}\nu^{N}_{m,L},\mu_{m,L} ) \leq CN^{-\frac{1}{2}} \label{e124}. 
 \end{equation}
By the embedding of $H^{1}(\Lambda_{L} ) \hookrightarrow C^{\kappa}(\Lambda_{L})$ for $\kappa<\frac{2-d}{2}$, we obtain \ref{e124} also in negative H\"{o}lder spaces, as in \cite{shen2022large}.  We should note that for marginals there is often an extra cancellation, and the optimal scaling is expected to be $N^{-1}$ instead of $N^{-\frac{1}{2}} $, see \cite{lacker2022quantitative} for a proof in finite dimensional spaces as well as the more recent work \cite{delgadino2025sharp}.  
\end{remark}
In the present work, we show that the estimate \eqref{e32} is a direct consequence of the infinite dimensional analogue of Talagrand's inequality \cite{Talagrand1996}.
Recall that the relative entropy of a measure $\nu$ with respect to another measure $\mu$ is defined by
\begin{equation}
\mathcal{H}(\nu|\mu)
\eqdef
\begin{cases}
\E^{\nu } \big [ \log\left(\frac{d\nu}{d\mu } \right) \big ]& \nu\ll \mu\\
    +\infty &\mbox{otherwise}.
\end{cases}
\label{e56}
\end{equation}
Talagrand \cite{Talagrand1996} made the fundamental observation that the relative entropy of any probability measure on $\R^{N}$ with respect to a standard Gaussian, dominates the $2$-Wasserstein distance with an explicit constant \textit{independent of} $N$. In the present work, we rely on the corresponding infinite dimensional statement, first observed by \cite{feyel2002measure}.
\footnote{The full details of the argument were provided only in the case of the classical Wiener space, based on the Girsanov theorem.  The authors leave the extension to the general case to the reader, referring to a more general form of the Girsanov theorem from \cite{ustunel2013transformation}    as the main tool.  An alternative argument, given again for the case of the classical Weiner space, can be found in \cite{lehec2013representation}.  For completeness, we give a self-contained argument in Appendix \ref{app:A}}. Specifying the general statement for abstract Weiner spaces in \cite{feyel2002measure} to our setting \footnote{Note in particular the factor of $\frac{2}{N}$ appearing on the RHS of the inequality, which comes from our convention of defining the $W_{[H_m^1(\Lambda_{L})]^{N}}$ metric with a factor of $\frac{1}{N}$.  Here we make a small departure from our prior work \cite{delgadino2023phase}, where we normalized the relative entropy (in which case only the constant $\sqrt{2}$ appears), but these two are obviously equivalent. } yields the following key ingredient for \ref{thm1}.
\begin{proposition} \label{prop:FreeFieldTalagrand}
Let $s>0$.  For any $\rho \in \mathcal{P}([H^{-s}(\Lambda_{L})]^N)$ the following inequality holds
\begin{equation}
W_{H_m^1(\Lambda_{L})^{N}}(\rho, \mu_{m,L}^{\otimes N}) \leq \sqrt{2N^{-1}} \mathcal{H}(\rho \mid \mu_{m,L}^{\otimes N} )^{\frac{1}{2}} \label{e36}.
\end{equation}
\end{proposition}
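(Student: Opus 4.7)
The proposition is the infinite-dimensional Talagrand transportation-entropy inequality for the Gaussian measure $\mu_{m,L}^{\otimes N}$, whose Cameron--Martin space is exactly $H_m^1(\Lambda_L)^N$ endowed with the unnormalized inner product $\sum_{j=1}^N \langle \cdot, \cdot \rangle_{H_m^1(\Lambda_L)}$. The factor $\sqrt{2/N}$ is purely a bookkeeping consequence of the averaging by $N^{-1}$ in \eqref{e33}: if $\tilde W$ denotes the $2$-Wasserstein distance with the \emph{unnormalized} Cameron--Martin norm, then $\tilde W^2 = N \, W_{H_m^1(\Lambda_L)^N}^2$, so the target reduces to the abstract Talagrand inequality $\tilde W^2 \leq 2\mathcal{H}(\rho \mid \mu_{m,L}^{\otimes N})$. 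My plan is to obtain this by Fourier truncation, reducing to the classical finite-dimensional inequality of \cite{Talagrand1996}.

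First I would set up the finite-dimensional approximants. For each $K \geq 1$, let $\pi_K$ denote the Fourier projection onto modes $\{|\xi| \leq K\}$, acting componentwise on $H_m^{-s}(\Lambda_L)^N$, and define the pushforwards $\rho^K \eqdef (\pi_K)_{\#}\rho$ and $\mu^K \eqdef (\pi_K)_{\#}\mu_{m,L}^{\otimes N}$. Then $\mu^K$ is a non-degenerate centered Gaussian supported on the finite-dimensional subspace $V_K \subset H_m^1(\Lambda_L)^N$, whose covariance in Fourier coordinates is the diagonal $\mathrm{diag}\{(m^2+|\xi|^2)^{-1} : |\xi| \leq K\}$ repeated over the $N$ components. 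The induced Cameron--Martin norm on $V_K$ is precisely the restriction of $\sum_{j=1}^N\|\cdot\|^2_{H_m^1(\Lambda_L)}$, so the classical finite-dimensional Talagrand inequality yields
\begin{equation}
\tilde W_{V_K}(\rho^K,\mu^K)^2 \leq 2\, \mathcal{H}(\rho^K \mid \mu^K), \label{eq:planK}
\end{equation}
which can be proved by any of the standard routes (Brenier--Caffarelli contraction, the HWI inequality, or the martingale/Föllmer drift argument of \cite{lehec2013representation}).

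Next I would pass to the limit $K \to \infty$. On the entropy side, the data-processing inequality gives $\mathcal{H}(\rho^K \mid \mu^K) \leq \mathcal{H}(\rho \mid \mu_{m,L}^{\otimes N})$, and the martingale convergence theorem applied to the conditional densities along the increasing filtration generated by the Fourier modes with $|\xi|\leq K$ yields $\mathcal{H}(\rho^K \mid \mu^K) \nearrow \mathcal{H}(\rho \mid \mu_{m,L}^{\otimes N})$. On the transport side, since Fourier truncations converge in $H_m^{-s}(\Lambda_L)$ for any $s > 0$, one has $\rho^K \rightharpoonup \rho$ and $\mu^K \rightharpoonup \mu_{m,L}^{\otimes N}$ weakly in $\mathcal{P}(H_m^{-s}(\Lambda_L)^N)$, and the cost $(x,y) \mapsto \sum_j \|x_j - y_j\|_{H_m^1}^2$ is lower-semicontinuous on $H_m^{-s}(\Lambda_L)^N \times H_m^{-s}(\Lambda_L)^N$. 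Standard lower semi-continuity of optimal transport costs under weak convergence of marginals therefore gives $\tilde W(\rho, \mu_{m,L}^{\otimes N}) \leq \liminf_K \tilde W_{V_K}(\rho^K, \mu^K)$. Combining with \eqref{eq:planK} and dividing by $N$ delivers \eqref{e36}.

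The main delicacy is the monotone convergence of relative entropies, together with the correct identification of the abstract Wiener space structure of $\mu_{m,L}^{\otimes N}$ needed to justify both the entropy convergence and the lower-semicontinuity in the appropriate topology. As an alternative that avoids finite-dimensional approximation altogether, one can work directly with the Föllmer drift representation: writing $d\rho = f\,d\mu_{m,L}^{\otimes N}$, one constructs a drift $(u_t)_{t\in[0,1]}$ on the Cameron--Martin space such that the driven process $X_t = \int_0^t u_s\,ds + B_t$ satisfies $X_1 \sim \rho$ and $\tfrac12\, \mathbb{E}\int_0^1 \|u_s\|^2_{H_m^1(\Lambda_L)^N}\,ds = \mathcal{H}(\rho \mid \mu_{m,L}^{\otimes N})$; the coupling $(B_1, X_1)$ then yields \eqref{e36} via Jensen's inequality applied to $\|X_1 - B_1\|^2_{H_m^1(\Lambda_L)^N} = \|\int_0^1 u_s\,ds\|^2$.
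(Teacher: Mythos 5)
Your proposal is correct and takes essentially the same approach as the paper. The paper's Appendix B proves Proposition~\ref{prop:TAL} (and hence Proposition~\ref{prop:FreeFieldTalagrand}) by projecting onto the eigenbasis of the covariance operator — which for $\mu_{m,L}^{\otimes N}$ is exactly the Fourier basis, so your truncation $\pi_K$ is the same reduction — then applying the classical finite-dimensional Talagrand inequality of \cite{Talagrand1996} after a linear change of variables (Lemma~\ref{lem:finiteTAL}), and passing to the limit via monotonicity in the projection dimension together with lower semicontinuity (Lemma~\ref{lem:limproj}), matching your use of data processing for the entropy and weak-convergence LSC for the transport cost; your bookkeeping of the $N^{-1}$ normalization into the $\sqrt{2/N}$ prefactor is also the same observation the paper records in a footnote.
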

\begin{remark}
The reader will notice that the proof strategy is based on a transportation inequality with respect to $\mu_{m,L}^{\otimes N}$ rather than $\nu^{N}_{m,L}$.  In fact, \cref{lem:TalagrandApp}
continues to hold, and even has a slightly simpler proof, if we reverse the roles of $\mu_{m,L}^{\otimes N}$ and $\nu^{N}_{m,L}$. For $N=1$, it should be possible to obtain such a transportation inequality from the uniform LSI in \cite{bauerschmidt2022log}, but their proof seems to currently be restricted to the scalar case.  A related work \cite{kunick2022gradienttype} on the Poincare inequality is also proved for $N=1$, using methods that seem more robust to the vector-valued context, but at the price of large mass assumption (which we want to avoid in the present work).  We also mention a very recent work \cite{bailleul2025transportationcostinequalitiessingular}, which provides a general framework for obtaining transportation cost inequalities in the setting of regularity structures, and hence should apply to the $O(N)$ model.  It would be interesting to see whether their criterion gives a bound with an implicit constant that scales favorably with $L,N$ as well as track the dependence on $\lambda,m$ (which will be important later for establishing Corollary   \ref{cor:doubleScalingLimit}).
\end{remark}

In light of \eqref{e36}, the following Lemma is sufficient to deduce \eqref{e32}.   \begin{lemma}\label{lem:TalagrandApp}
For all $m>0$ and $\lambda>0$, there exists a constant $C(m,\lambda,L)$ such that
\begin{equation}
     \sup_{N \geq 1} \mathcal{H}(\nu^{N}_{m,L} \mid \mu_{m,L}^{\otimes N} ) \leq C \nonumber.
\end{equation}
\end{lemma}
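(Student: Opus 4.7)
The plan is to reduce the entropy bound to a direct application of the partition function estimates in Lemma~\ref{lem:partitionFunction}, exploiting the Gibbsian structure of the measure via a variational inequality. Working first at the level of the regularized measure $\nu^{N,\epsilon}_{m,L}$, whose density relative to $\mu_{m,L}^{\otimes N}$ is $(\mathbf{Z}_{m,L,\epsilon}^{N,\lambda})^{-1}\exp(-V_{\epsilon})$ with $V_{\epsilon}\eqdef \frac{\lambda}{4N}\int_{\Lambda_{L}}:\|Z_{\epsilon}(x)\|_{\R^{N}}^{4}:_{m}dx$, a direct computation yields the identity
\begin{equation}
\mathcal{H}(\nu^{N,\epsilon}_{m,L}\mid \mu_{m,L}^{\otimes N})= -\E^{\nu^{N,\epsilon}_{m,L}}[V_{\epsilon}]-\log \mathbf{Z}_{m,L,\epsilon}^{N,\lambda}.
\end{equation}
The naive bound on $\E^{\nu^{N,\epsilon}_{m,L}}[-V_{\epsilon}]$ obtained from the pointwise lower bound \eqref{e38} blows up as $\epsilon\to 0$, so one cannot estimate the first term directly. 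Instead, the idea is to absorb it back into the entropy by comparing against a slightly larger coupling.

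Concretely, fix $\alpha>1$ and apply the Donsker--Varadhan variational inequality $\E^{\nu}[F]\leq \mathcal{H}(\nu\mid\mu)+\log \E^{\mu}[e^{F}]$ to the (exponentially integrable by Lemma~\ref{lem:partitionFunction}) test function $F=-\alpha V_{\epsilon}$. Dividing by $\alpha$ and substituting into the entropy identity gives, after rearranging,
\begin{equation}
\mathcal{H}(\nu^{N,\epsilon}_{m,L}\mid \mu_{m,L}^{\otimes N})\leq \frac{1}{\alpha-1}\log \E^{\mu_{m,L}^{\otimes N}}[e^{-\alpha V_{\epsilon}}]-\frac{\alpha}{\alpha-1}\log \mathbf{Z}_{m,L,\epsilon}^{N,\lambda}.
\end{equation}
The second term on the right is non-positive by the lower bound $\mathbf{Z}_{m,L,\epsilon}^{N,\lambda}\geq 1$ of Lemma~\ref{lem:partitionFunction}. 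The first term is precisely the logarithm of the partition function with coupling $\alpha\lambda$ in place of $\lambda$, so invoking the upper bound in Lemma~\ref{lem:partitionFunction} (for instance with $\alpha=2$) yields
\begin{equation}
\mathcal{H}(\nu^{N,\epsilon}_{m,L}\mid \mu_{m,L}^{\otimes N})\leq \frac{1}{\alpha-1}\log C(\alpha\lambda/4,m,L^{2}),
\end{equation}
which is manifestly uniform in both $\epsilon$ and $N$.

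To conclude, I would pass to the limit $\epsilon\to 0$ using lower semicontinuity of the relative entropy with respect to weak convergence, together with the convergence $\nu^{N,\epsilon}_{m,L}\to \nu^{N}_{m,L}$ from the construction of the limiting measure (deferred to Appendix~\ref{app:C}). The only mildly delicate point is justifying the variational inequality with the unbounded test function $-\alpha V_{\epsilon}$ at fixed $\epsilon$, but this follows routinely from the $L^{p}(\mu_{m,L}^{\otimes N})$ bound on $V_{\epsilon}$ in Lemma~\ref{lem:stabAction} combined with the exponential integrability from Lemma~\ref{lem:partitionFunction} (applied at a coupling slightly larger than $\alpha\lambda$). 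The structural point behind the argument is that the entire bound has been reduced to the single quantitative input of Lemma~\ref{lem:partitionFunction}, a feature that makes it immediate to trace the dependence of $C(m,\lambda,L)$ on $(m,\lambda,L)$ when it becomes relevant in Section~\ref{sec:IVL}.
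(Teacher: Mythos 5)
Your proposal is correct and gives a clean proof of the lemma, but it takes a genuinely different route from the paper. Both arguments start from the same entropy identity for the regularized Gibbs measure $\nu^{N,\epsilon}_{m,L}$ and invoke lower semicontinuity of relative entropy as $\epsilon \to 0$. The paper then bounds the first term $-\E^{\nu^{N,\epsilon}_{m,L}}[V_{\epsilon}] = -\frac{\lambda}{\mathbf{Z}^{N,\lambda}_{m,L,\epsilon}}\E^{\mu^{\otimes N}_{m,L}}[F_{L,\epsilon}^{N} e^{-\lambda F_{L,\epsilon}^{N}}]$ by Cauchy--Schwarz, which requires both the uniform $L^{2}$ moment bound on $F_{L,\epsilon}^{N}$ from Lemma~\ref{lem:stabAction} \emph{and} the partition function bound at coupling $2\lambda$ from Lemma~\ref{lem:partitionFunction}. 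You instead absorb $\E^{\nu^{N,\epsilon}_{m,L}}[-V_{\epsilon}]$ into the entropy itself via the Donsker--Varadhan (Gibbs variational) inequality applied to $-\alpha V_{\epsilon}$ with $\alpha>1$, which reduces everything to partition function bounds alone: your only quantitative input is Lemma~\ref{lem:partitionFunction}. This is slightly more economical, and incidentally makes the $(m,\lambda,L)$ dependence of the final constant more transparent, which is exactly the kind of bookkeeping the paper later cares about in Section~\ref{sec:IVL}. Your technical worry about applying Donsker--Varadhan to an unbounded test function is not a serious one: at fixed $\epsilon>0$ the Wick lower bound \eqref{e38} shows $-\alpha V_{\epsilon}$ is bounded above pointwise, so $e^{-\alpha V_{\epsilon}}$ is a.s.\ bounded and the variational inequality is immediate by truncating from below. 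One thing worth noting is that the paper's own proof of Lemma~\ref{lem:expmoments} uses an argument in the same spirit as yours (a variational/Barashkov--Gubinelli type bound), so your route fits naturally with the paper's toolbox even though it differs at this specific lemma.
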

\begin{remark}
The dependence of $C$ on $L$ we obtain here \textit{is not} optimal in $L$.  This requires some additional work and will be necessary when we analyze the infinite volume limit in Section \ref{sec:IVL}. 
\end{remark}
\begin{proof} 
Recalling Definition \ref{e56} and defining
\begin{equation}
F_{L,\epsilon}^{N} \eqdef\frac{1}{4N}\int_{\Lambda_{L}}:\|Z_{\epsilon}(x)\|_{\R^{N}}^{4}:dx,
\end{equation}
we find that by lower-semicontinuity of the relative entropy it holds
\begin{align}
\mathcal{H}(\nu^{N}_{m,L} \mid \mu_{m,L}^{\otimes N})
%&=\E^{\nu^{N}_{L,\epsilon} } \bigg [ \log \bigg (\frac{d\nu^{N}_{L,\epsilon} }{d\mu^{\otimes N}_{m,L} }  \bigg ) \bigg ] \nonumber \\
&\leq \sup_{\epsilon \in (0,m)} -\frac{\lambda} {\mathbf{Z}^{N,\lambda}_{m,L,\epsilon} }\E^{\mu^{\otimes N}_{m,L} }\big[F_{L,\epsilon}^{N}\text{exp}\big(-\lambda F_{L,\epsilon}^{N} \big )\big]-\log(\mathbf{Z}^{N,\lambda}_{m,L,\epsilon} )\nonumber \\
& \leq \sup_{\epsilon \in (0,m)} \lambda \frac{ \big ( \mathbf{Z}^{N,2\lambda}_{m,L,\epsilon} \big )^{\frac{1}{2}}} {\mathbf{Z}^{N,\lambda}_{m,L,\epsilon} }\|F_{L,\epsilon}^{N} \|_{L^{2}(d\mu_{m,L}^{\otimes N})}  \leq C(m,\lambda,L),
\end{align}
where we used Lemma \ref{lem:partitionFunction} with $\Lambda=\Lambda_{L}$, the estimate \eqref{e25} for $p=2$, and H\"{o}lder's inequality.
\end{proof}
%%%%%%%%%%%%%%%%%%%%%%%%%%%%%%%%%%%%%%%%%%%%%%%%%%%%%%%%%%%%%%%%%%%%%%%%%%%%%%%%%%%%%%%%%%%%%%%%%%%%%%%%%%%%%%%%%%%%%%%%%%%%%%%%%%%%%%%%%%%%%%%%%%%%%%%%%%%%%%%%%%%%%%%%%%%%%%%%%%%%%%%%%%%%%%%%%%%%%%%%%%%%%%%%%%%%%%%%%%%%%%%%%%%%%%%%%%%%%%%%%%%%%%%%%%%%%%%%%%%%%%%%%%%%%%%%%%%%%%%%%%%%%%%%%%%%%%%%%%%%%%%%%%%%%%%%%%%%%%%%%%%%%%%%%%%%%%%%%%%%%%%%%%%%%%%%%%%%%%%%%%%%%%%%%%%%%%%%%%%%%%%%%%%%%%%%%%%%%%%%%%%%%%%%%%%%%%%%%%%%%%%%%%%%%%%%%%%%%%

\section{Large N Limit of the Linear Sigma Model on the Plane} \label{sec:IVL}
In this section, we prove our main results on the Linear Sigma Model on $\R^{2}$ described in Section \ref{sec:Intro}.  Hence, we return to the non-convex classical potential, defined for $\lambda, \beta \geq 0$ by
\begin{equation}
\Phi \in \mathcal{D}(\Lambda_{L})^{N} \mapsto  \frac{1}2{}\int_{\Lambda_{L}}\|\nabla \Phi(x)\|_{\R^{N \times 2}}^{2}dx+\frac{\lambda}{4N}\int_{\Lambda_{L}} (\|\Phi(x)\|_{\R^{N}} ^{2}-N \beta \big )^{2} dx \label{e116},
\end{equation}
starting with the rigorous definition of the measure $\nu^{N}_{L}$ introduced in Section \ref{sec:Intro}.  We find it most transparent in the non-convex setting to define the measure via lattice approximation, rather than a momentum cutoff as in Section \ref{subsec:Talagrand}, which was convenient for the estimates in Section \ref{subsec:stab}.  Recall that we introduced the probability measures $\nu^{N}_{L,\epsilon}$ through their density \eqref{e113}, and we now clarify the definition of the Wick product.  To make it precise, we define it similarly to Section \ref{ss:MGFF} 
% and first introduce $:\Phi_{i}^{2}\Phi_{j}^{2}:$ as in \eqref{e141}, but with $\Phi_{i}$ in place of $Z_{\epsilon,i}$ and $m=1$.  We then define
\begin{equation}
: (\|\Phi(x)\|_{\R^{N}}^{2} -N \beta)^{2}: \eqdef: \|\Phi(x)\|_{\R^{N}}^{4} :_{1}-2N \beta : \|\Phi(x)\|_{\R^{N}}^{2}:_{1}+N^{2}\beta^{2} \nonumber,
\end{equation}
where $:\|\Phi(x)\|_{\R^{N}}^{2}:_{1} \eqdef \sum_{i=1}^{N}:\Phi_{i}(x)^{2}:_{1}$. Hence, taking into account the identity \eqref{e123}, we can write the density more explicitly as
\begin{equation}
d\nu_{L,\epsilon}^{N}(\Phi) \varpropto \exp(-V^{N}_{L,\epsilon}(\Phi) )\exp \bigg (-\frac{\epsilon^{2}}{2}\sum_{x \in \Lambda_{L,\epsilon} }\|\nabla_{\epsilon}\Phi(x)\|_{\R^{N \times 2}}^{2} \bigg)\prod_{x \in \Lambda_{L,\epsilon}}d \Phi(x), \label{e126}
\end{equation}
where 
\begin{equation}
V^{N}_{L,\epsilon}(\Phi)\eqdef \frac{\lambda}{4} \cdot \epsilon^{2}\sum_{x \in \Lambda_{\epsilon,L}} \bigg ( \frac{1}{N}\|\Phi(x)\|_{\R^{N}}^{4}-2\bigg(\bigg(1+\frac{2}{N} \bigg)C_{\epsilon,L}^{1}+ \beta   \bigg)\|\Phi(x)\|_{\R^{N}}^{2} \bigg )
\end{equation}
Let $\mathcal{E}_{L,\epsilon}: (\R^{N})^{\Lambda_{L,\epsilon}} \mapsto \mathcal{S}'(\Lambda_{L})^{N}$ be the extension operators given in \cite{martin2019paracontrolled} via the inverse Fourier transform, see the Appendix of \cite{gubinelli2021pde} for more discussion.  We now proceed to the following definition.
\begin{definition}\label{eq:defsec3}
The Linear Sigma Model $\nu^{N}_{L}$ on $\Lambda_{L}$ corresponding to a Wick renormalization of the classical potential \eqref{e116}, is the unique limit as $\epsilon \to 0$ of the measures $(\mathcal{E}_{L,\epsilon})_{\#} \nu^{N}_{L,\epsilon}$ on $\mathcal{P}_{\text{sym}}(\mathcal{D}'(\Lambda_{L})^{N})$.
\end{definition}
The existence and the uniqueness of this measure for $N=1$ is classical, c.f. \cite{guerra1975p} for a detailed study of the lattice approximation.  The classical arguments also generalize to arbitrary $N$, and for completeness we give the construction in Appendix \ref{app:C}.
\begin{remark}
Although some form of Wick renormalization is necessary, the choice to use the variance of $\mu_{1,L}$ in defining the renormalization constant $C_{\epsilon,L}^{1}$ is somewhat arbitrary and we could also use some other $\mu_{m_{0},L}$ for $m_{0}>0$.  Note that Wick renormalization relative to $\mu_{1,L}$ only refers to the choice of renormalization constant and \textit{should not be confused with} arbitrarily adding a unit mass to the classical potential as in \eqref{e115}.  Nonetheless, $\nu^{N}_{L}$ is in fact absolutely continuous with respect to $\mu_{1,L}^{\otimes N}$, but to write the corresponding density, the classical potential needs to be compensated by a subtraction of $\int_{\Lambda_{L}}\|\Phi(x)\|_{\R^{N}}^{2}dx$. 
\end{remark}

\subsection{The Gap Equation} \label{subsec:GapEquation}
Our starting point is to show that the measures $\nu^{N}_{L}$ can equivalently be realized as the Linear Sigma Model $\nu^{N}_{m,L}$ on $\Lambda_{L}$ with respect to the strictly convex potential \eqref{e115}, as introduced in Section \ref{subsec:Talagrand}, for a well chosen mass $m=m(N,L,\lambda,\beta) \in (0,1)$.  This mass will be used to define the limiting mass $\lim_{L, \,N\to\infty}m=m_{*}=m_{*}(\lambda,\beta)$ in Theorem \ref{ref:MainThm}.  The non-linear equations satisfied by $m$ and $m_{*}$ in Theorem \ref{ref:MainThm} are referred to as \textit{gap equations}, borrowing the terminology from the physics literature \cite{moshe2003quantum}.

\medskip

Results of this type for $N=1$ are attributed in \cite{simon2015p} to R. Baumel, although it seems no publication ever appeared.  The \textit{gap equations} play an important role in \cite{guerra1976boundary}, where the infinite volume pressure is shown to be unique for a wide class of boundary conditions, as well as in the classical works \cite{glimm1976convergent} and \cite{spencer1974mass}.  We give here a slihtly different argument emphasizing relative entropy and functional inequalities.   
\begin{lemma}\label{lem:changingMass}
For any $\lambda>0$, $\beta \geq 0$, there exists a unique $m \in (0,1)$, which arises as the solution to the gap equation
\begin{equation}
 \frac{m^{2}}{\lambda}+\bigg (1+\frac{2}{N} \bigg )\frac{1}{L^{2}}\sum_{ \xi \in \Lambda_{L}^{*}} \left(\frac{1}{1+|\xi|^{2} }-\frac{1}{m^{2}+|\xi|^{2} } \right)=-\beta, \label{e91}   \end{equation}
such that the probability measures $\nu^{N}_{L}$ and $\nu^{N}_{m,L}$, defined in \cref{eq:defsec3} and \cref{def:LSMFinite} respectively, are equal.
\end{lemma}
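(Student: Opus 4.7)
My plan has two independent ingredients: existence/uniqueness of the mass $m$, and the algebraic identification of the two regularized densities.

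First, I would establish existence and uniqueness of $m \in (0,1)$ solving \eqref{e91} by a monotonicity and intermediate value argument. Let
\[
f_{N,L}(m) \eqdef \frac{m^{2}}{\lambda}+\bigg(1+\frac{2}{N}\bigg)\frac{1}{L^{2}}\sum_{\xi \in \Lambda_{L}^{*}} \bigg(\frac{1}{1+|\xi|^{2}}-\frac{1}{m^{2}+|\xi|^{2}}\bigg).
\]
Differentiating in $m$ shows $f_{N,L}'(m)=\frac{2m}{\lambda}+(1+\frac{2}{N})\frac{1}{L^{2}}\sum_{\xi}\frac{2m}{(m^{2}+|\xi|^{2})^{2}}>0$ for $m>0$, so $f_{N,L}$ is strictly increasing on $(0,1)$. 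The $\xi=0$ term contributes $1-\frac{1}{m^{2}}$, forcing $f_{N,L}(m)\to -\infty$ as $m\to 0^{+}$; on the other hand $f_{N,L}(1)=\frac{1}{\lambda}>0\geq -\beta$. The IVT then yields a unique root $m=m(N,L,\lambda,\beta)\in(0,1)$ of $f_{N,L}(m)=-\beta$.

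Next, I would identify the two measures at the level of regularizations and pass to the limit. Working with a common momentum cutoff (the equivalence with the lattice cutoff in Definition~\ref{eq:defsec3} is the part I would defer to the construction in Appendix~\ref{app:C}), the measure $\nu^{N}_{L,\epsilon}$ has density with respect to Lebesgue proportional to
\[
\exp\!\bigg(-\tfrac{1}{2}\|\nabla\Phi\|^{2}-\tfrac{\lambda}{4N}\bigl[{:}\|\Phi\|^{4}{:}_{1}-2N\beta\,{:}\|\Phi\|^{2}{:}_{1}+N^{2}\beta^{2}\bigr]\bigg),
\]
while the density of $\nu^{N}_{m,L,\epsilon}$ with respect to Lebesgue is proportional to $\exp(-\tfrac{1}{2}(m^{2}\|\Phi\|^{2}+\|\nabla\Phi\|^{2})-\tfrac{\lambda}{4N}{:}\|\Phi\|^{4}{:}_{m})$. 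The key step is to recompute the Wick products in the first density relative to the reference mass $m$ rather than $1$. Using \eqref{e123} and the analogous identity for $:\|\Phi\|^{2}:$, one obtains
\[
{:}\|\Phi\|^{4}{:}_{1}-{:}\|\Phi\|^{4}{:}_{m}=-(C^{1}_{\epsilon,L}-C^{m}_{\epsilon,L})(2N+4)\|\Phi\|^{2}+\text{const},
\]
\[
{:}\|\Phi\|^{2}{:}_{1}-{:}\|\Phi\|^{2}{:}_{m}=-N(C^{1}_{\epsilon,L}-C^{m}_{\epsilon,L}).
\]
Substituting, the gradient term cancels and the coefficient of $\|\Phi\|^{2}$ arising from converting the first density becomes
\[
\tfrac{\lambda(2N+4)}{4N}(C^{1}_{\epsilon,L}-C^{m}_{\epsilon,L})+\tfrac{\lambda\beta}{2}.
\]
Recalling \eqref{e122}, a direct computation shows that as $\epsilon\to 0$, $C^{1}_{\epsilon,L}-C^{m}_{\epsilon,L}$ converges to $\frac{1}{L^{2}}\sum_{\xi}(\frac{1}{1+|\xi|^{2}}-\frac{1}{m^{2}+|\xi|^{2}})$ (the sum is absolutely convergent since differences cancel the logarithmic divergence). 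Hence the gap equation \eqref{e91} is precisely the statement that this coefficient equals $-m^{2}/2$ in the limit, matching the mass term in $\nu^{N}_{m,L,\epsilon}$.

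Finally, I would conclude by taking $\epsilon\to 0$: the densities of $\nu^{N}_{L,\epsilon}$ and $\nu^{N}_{m,L,\epsilon}$ relative to $\mu^{\otimes N}_{m,L}$ differ only by the quadratic term with coefficient $\tfrac{\lambda(2N+4)}{4N}(C^{1}_{\epsilon,L}-C^{m}_{\epsilon,L})+\tfrac{\lambda\beta}{2}+\tfrac{m^{2}}{2}$ times $\|\Phi\|^{2}$, plus deterministic constants absorbed into the normalization. This coefficient converges to $0$ as $\epsilon \to 0$ precisely by the gap equation, and since both sides converge by Definitions~\ref{def:LSMFinite} and~\ref{eq:defsec3}, the limiting measures coincide. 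The main obstacle in this scheme is the reconciliation of the lattice regularization used in Definition~\ref{eq:defsec3} with the momentum-space regularization of Definition~\ref{def:LSMFinite}; this requires either passing through a common regularization or invoking the equivalence of the two constructions established in Appendix~\ref{app:C}. Once that is granted, the gap-equation calculation above is a routine matching of coefficients after rewriting the Wick products in a common reference mass.
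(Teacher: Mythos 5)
Your existence/uniqueness argument for the root of the gap equation is correct and matches the paper's Remark on this point (the function is continuous, strictly increasing, diverges to $-\infty$ near $0$ from the $\xi=0$ term, and is positive at $m=1$). Your identification of the Wick-recentering identities and the resulting coefficient of $\|\Phi\|^2$ is also on the right track: rewriting $:\|\Phi\|^4:_1$ and $:\|\Phi\|^2:_1$ relative to the mass-$m$ reference and collecting terms does produce a quadratic perturbation whose coefficient vanishes as $\epsilon\to 0$ precisely when the gap equation holds.

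The genuine gap is in your final sentence: ``This coefficient converges to $0$ as $\epsilon \to 0$ \ldots and since both sides converge by Definitions, the limiting measures coincide.'' This does not follow. The two regularized densities differ by a factor of the form $\exp\bigl(-c_\epsilon \int_{\Lambda_L}\|\Phi_\epsilon\|^2 \,dx\bigr)$ (up to normalization), where $c_\epsilon\to 0$, but the integrand is an unbounded random variable whose mean is $N C^m_{\epsilon,L} L^2 \sim |\log\epsilon|$, and the exponential of a small multiple of an unbounded quantity need not converge to $1$ in any useful sense without a quantitative estimate. Convergence of each sequence to its own limit does not imply the limits agree; you need to control the perturbation. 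Concretely you would need to show two things: (i) a rate on $c_\epsilon$ (the paper obtains $O(\epsilon^2)$ after comparing the $\epsilon$-dependent gap equation \eqref{e125} with \eqref{e91}); and (ii) Gaussian exponential moment bounds for $\int:\|\Phi_\epsilon\|^2:_m\,dx$, uniformly in $\epsilon$ (the paper's Lemma~\ref{lem:expmoments}, proved via Talagrand's inequality). Without (ii), you cannot convert ``the exponent is small in $L^p$'' into ``the density ratio converges to $1$.''

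The paper also organizes the argument differently in a way that avoids this cliff. Rather than keeping $m$ fixed at every $\epsilon$, it introduces an $\epsilon$-dependent mass $m_\epsilon$ solving \eqref{e125}, for which the lattice density of $\nu^N_{L,\epsilon}$ coincides \emph{exactly} with the mass-$m_\epsilon$ Linear Sigma Model density at each $\epsilon$. The problem is then reduced to comparing the model with mass $m_\epsilon$ to the one with mass $m$, which is a pure perturbation of the Gaussian reference. That comparison is quantified through the relative entropy $\mathcal{H}(\tilde\nu_\epsilon\mid\nu_\epsilon)\to 0$, which via Pinsker gives convergence in total variation. Your scheme could in principle be made to work along similar lines, but you would have to supply both the rate $c_\epsilon=O(\epsilon^2)$ and the exponential moment bound---these are the non-trivial content of the lemma, not a formality to be deferred.
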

\begin{remark} \label{rem:gapExUn}
Before proceeding to the proof of Lemma \ref{lem:changingMass}, we record the following elementary fact about equation \eqref{e91}.  Namely, for any $\lambda,m>0$ and $\beta \geq 0$, there exists a unique solution to \eqref{e91} as a consequence of the Intermediate Value Theorem.  Indeed, the LHS of \eqref{e91} is a continuous increasing function of $m$. This follows by the representation
\begin{equation}
\sum_{ \xi \in \Lambda_{L}^{*}} \left(\frac{1}{1+|\xi|^{2} }-\frac{1}{m^{2}+|\xi|^{2} } \right )=(m^{2}-1)\sum_{ \xi \in \Lambda_{L}^{*}} \left(\frac{1}{(1+|\xi|^{2})(m^{2}+|\xi|^{2}) }  \right ),
\end{equation}
which expresses an absolutely convergent series.  Furthermore, the LHS tends to $-\infty$ as $m \to 0$, and takes a positive value of $\frac{1}{\lambda}$ at $m=1$.  
\end{remark}
\begin{remark}\label{rem:GapEqMot}
We now give some further motivation for \eqref{e91} and a formal proof of Lemma \ref{lem:changingMass}. Keeping in mind the formula \eqref{e126}, if $m_{\epsilon}$ satisfies
\begin{equation}
\left(1+\frac{2}{N}\right)C_{\epsilon,L}^{1}+\beta=\left(1+\frac{2}{N}\right)C_{\epsilon,L}^{m_{\epsilon}}-\frac{ m_{\epsilon}^{2}}{\lambda},\label{e125}
\end{equation}

then we find that
\begin{align}
d\nu_{L,\epsilon}^{N}(\Phi) &\varpropto \text{exp} \bigg (-\frac{\lambda}{4}\int_{\Lambda_{L,\epsilon}}\bigg ( \frac{1}{N}\|\Phi(x)\|_{\R^{N}}^{4}-2\left(1+\frac{2}{N} \right)C_{\epsilon,L}^{m_{\epsilon}}   \|\Phi(x)\|_{\R^{N}}^{2} \bigg )dx \bigg )d \mu_{m_{\epsilon},\epsilon}^{\otimes N} \nonumber \\
&\varpropto \text{exp} \bigg (-\frac{\lambda}{4N}\int_{\Lambda_{L,\epsilon}} :\|\Phi(x)\|_{\R^{N}}^{4}:_{m_{\epsilon}}dx \bigg )d \mu_{m_{\epsilon},\epsilon}^{\otimes N} \label{e130},
\end{align}
where for any $\gamma>0$ we denote by 
\begin{equation}
d\mu_{\gamma,\epsilon}^{\otimes N}(\Phi) \varpropto\exp \bigg (-\frac{1}{2}\int_{\Lambda_{L,\epsilon}} \big( \|\nabla_{\epsilon}\Phi(x)\|_{\R^{N \times 2}}^{2}+\gamma^{2} \| \Phi(x)\|_{\R^{N}}^{2} \big) dx \bigg )\prod_{x \in \Lambda_{L,\epsilon}}d \Phi(x),\label{e133}
\end{equation}
and denote the normalizing constant by $\textbf{Z}^{GFF}_{\gamma,\epsilon}$.

Note that this is precisely the lattice approximation to the Wick renormalization of the strictly convex classical potential \eqref{e115}.  Rearranging \eqref{e125} then taking $\epsilon \to 0$ and recalling \eqref{e122}, we obtain the gap equation \eqref{e91}.
\end{remark}
\begin{proof}
For simplicity of notation, we omit dependence of the measure and partition function on $N$ and $L$ within this proof, as they are fixed throughout the argument.  We introduce an alternative discrete approximation $\tilde{\nu}_{\epsilon}$ defined exactly as $\nu_{\epsilon}$, but with $m_{\epsilon}$ replaced by $m$ in \eqref{e130}.  By the equivalence of lattice and momentum regularization in the $\epsilon \to 0$ limit, which can be established using the argument in Theorem 8.5 of \cite{simon2015p} (see also \cite{guerra1975p}), we find that $\mathcal{E}_{\epsilon}\tilde{\nu}_{\epsilon}$ converges to $\nu_{m,L}^{N}$ as $\epsilon \to 0$ in $\mathcal{P}(\mathcal{D}'(\Lambda_{L})^{N} )$.  Furthermore, by Pinsker's inequality it holds
\begin{equation}
d_{\text{TV}} \big (  (\mathcal{E}_{\epsilon})_{\#} \nu_{\epsilon},(\mathcal{E}_{\epsilon})_{\#} \tilde{\nu}_{\epsilon} \big ) \leq \mathcal{H} \big (  \big ( \mathcal{E}_{\epsilon})_{\#} \tilde{\nu}_{\epsilon} \mid (\mathcal{E}_{\epsilon})_{\#} \nu_{\epsilon} \big ) \le \mathcal{H} \big (  \tilde{\nu}_{\epsilon} \mid  \nu_{\epsilon} \big ).
\end{equation}
Hence, the proof will be complete as soon as we establish that
\begin{equation}
\lim_{\epsilon \to 0}\mathcal{H} \big (  \tilde{\nu}_{\epsilon} \mid  \nu_{\epsilon} \big )=0 \label{e132}.
\end{equation}
To this end, since the measures $\nu_{\epsilon}$ and $\tilde{\nu}_{\epsilon}$ only differ by the choice of mass, we start by comparing $m$ and $m_{\epsilon}$ and claim that 
\begin{equation}
m_{\epsilon}^{2}-m^{2}=O(\epsilon^{2}) \label{e131}.
\end{equation}
Indeed, first note that by \eqref{e125} and \eqref{e91} it holds  
\begin{align}
&m_{\epsilon}^{2}+\frac{\lambda}{2}\left(1+\frac{2 }{N}\right)(m_{\epsilon}^{2}-1)\frac{1}{L^{2}}\sum_{ \xi \in \Lambda_{L}^{*}}\frac{|\eta(\epsilon \xi)|^{2}}{(1+|\xi|^{2})(m_{\epsilon}^{2}+|\xi|^{2} ) } =-\beta \label{e134} \\
&m^{2}+\frac{\lambda}{2}\left(1+\frac{2 }{N}\right)(m^{2}-1)\frac{1}{L^{2}}\sum_{\xi \in \Lambda_{L}^{*}}\frac{|\eta(\epsilon \xi)|^{2}}{(1+|\xi|^{2})(m^{2}+|\xi|^{2} ) } =-\beta+O(\epsilon^{2}) \label{e135}, 
\end{align}
where \eqref{e135} relies on the bound
\begin{equation}
 \frac{1}{L^{2}}\sum_{\xi \in \Lambda_{L}^{*} }\frac{1-|\eta(\epsilon \xi)|^{2}}{(1+|\xi|^{2})(m^{2}+|\xi|^{2} ) } \lesssim \int_{\frac{1}{2}\epsilon^{-1}}^{\infty}r^{-3}dr=O(\epsilon^{2}).   
\end{equation}
We now subtract \eqref{e135} from \eqref{e134} and observe that
\begin{align}
&(m_{\epsilon}^{2}-1)\frac{1}{L^{2}}\sum_{\xi \in \Lambda_{L}^{*}}\frac{|\eta(\epsilon \xi)|^{2}}{(1+|\xi|^{2})(m_{\epsilon}^{2}+|\xi|^{2} ) }-(m^{2} -1)\frac{1}{L^{2}}\sum_{\xi \in \Lambda_{L}^{*}}\frac{|\eta(\epsilon \xi)|^{2}}{(1+|\xi|^{2})(m^{2}+|\xi|^{2} ) } \nonumber \\
&=(m_{\epsilon}^{2}-m^{2} )\frac{1}{L^{2}} \bigg ( \sum_{\xi \in \Lambda_{L}^{*}}\frac{|\eta(\epsilon \xi)|^{2}}{(1+|\xi|^{2})(m_{\epsilon}^{2}+|\xi|^{2} ) }+\sum_{\xi \in \Lambda_{L}^{*}}\frac{(1-m_{\epsilon}^{2})|\eta(\epsilon \xi)|^{2}}{(1+|\xi|^{2})(m_{\epsilon}^{2}+|\xi|^{2} )(m^{2}+|\xi|^{2}) }  \bigg ) \label{e143},
\end{align}
where we note that the coefficient of $m_{\epsilon}^{2}-m^{2}$ in \eqref{e143} is bounded from above uniformly in $\epsilon$ since we have the uniform lower bound for $m_{\epsilon}$ by Lemma \ref{lem:epsGapEq} and both series are convergent.  This yields \eqref{e131}. 

\medskip

We now study $\mathcal{H}(\nu_{\epsilon} \mid \tilde{\nu}_{\epsilon})$ and first note that we may write
\begin{align}
&d \nu_{\epsilon}=\frac{1}{\textbf{Z}_{\epsilon}}\text{exp}(-V_{\epsilon})d\mu_{m_{\epsilon},\epsilon}^{\otimes N}, \qquad V_{\epsilon}(\Phi) \eqdef \frac{\lambda}{4N}\int_{\Lambda_{L,\epsilon}} :\|\Phi(x)\|_{\R^{N}}^{4}:_{m_{\epsilon}}dx \nonumber \\
& d \tilde{\nu}_{\epsilon}=\frac{1}{ \tilde{\textbf{Z}}_{\epsilon}}\text{exp}(-\tilde{V}_{\epsilon})d\mu_{m,\epsilon}^{\otimes N}, \qquad \tilde{V}_{\epsilon}(\Phi) \eqdef \frac{\lambda}{4N}\int_{\Lambda_{L,\epsilon}} :\|\Phi(x)\|_{\R^{N}}^{4}:_{m}dx, \nonumber .
\end{align}
so that we obtain
\begin{align}
\mathcal{H}(\tilde{\nu}_{\epsilon} \mid \nu_{\epsilon} )
&=\text{log} \left( \frac{\textbf{Z}_{\epsilon}}{\tilde{\textbf{Z}}_{\epsilon}}  \right)+\E^{\tilde{\nu}_{\epsilon}}[V_{\epsilon}-\tilde{V}_{\epsilon} ]+\E^{\tilde{\nu}_{\epsilon}} \bigg [\text{log} \bigg ( \frac{d \mu_{m_{\epsilon},\epsilon}^{\otimes N} }{d \mu_{m,\epsilon}^{\otimes N}} \bigg ) \bigg ] \nonumber \\
&=\text{log} \left( \frac{\textbf{Z}_{\epsilon}}{\tilde{\textbf{Z}}_{\epsilon}}  \right)-\text{log} \bigg ( \frac{\tilde{\textbf{Z}}_{m_{\epsilon},\epsilon}^{GFF}}{\textbf{Z}_{m,\epsilon}^{GFF} }  \bigg )+R_{\epsilon} \label{e144} ,
\end{align}
where we recall that $\textbf{Z}_{m,\epsilon}^{GFF}$ is the normalizing constant introduced in Remark  \ref{rem:GapEqMot} and furthermore, keeping in mind \eqref{e130} and \eqref{e133} 
\begin{equation}
R_{\epsilon} \eqdef A_{\epsilon}\E^{\tilde{\nu}_{\epsilon}}\int_{\Lambda_{\epsilon,L}}\|\Phi(x)\|_{\R^{N }}^{2}dx+B_{\epsilon}
\end{equation}
and 
\begin{align}
A_{\epsilon} &\eqdef m^{2}-m_{\epsilon}^{2}+\frac{\lambda}{2}\left(1+\frac{2}{N}\right)(m_{\epsilon}^{2}-m^{2} )\sum_{ \xi \in \Lambda_{L}^{*}}\frac{|\eta(\epsilon \xi)|^{2}}{(m^{2}+|\xi|^{2})(m_{\epsilon}^{2}+|\xi|^{2} ) } \nonumber \\
B_{\epsilon} &\eqdef \frac{\lambda}{4}\big ((C_{\epsilon,L}^{m} )^{2} -(C_{\epsilon,L}^{m_{\epsilon}} )^{2} \big ) \big ( N+2 \big ) \nonumber.
\end{align}
We now simplify the contributions of the partition functions above.  To this end, we observe that
\begin{equation}
Z_{\epsilon}=\E^{\mu_{m_{\epsilon},\epsilon}^{\otimes N} }[\text{exp}(-V_{\epsilon}) ]=\E^{\mu_{m ,\epsilon}^{\otimes N} } \bigg [\frac{d \mu_{m_{\epsilon},\epsilon}^{\otimes N} }{d \mu_{m,\epsilon}^{\otimes N}} \text{exp}(-V_{\epsilon}) \bigg ]=\frac{\tilde{\textbf{Z}}_{m ,\epsilon}^{GFF}}{\textbf{Z}_{m_{\epsilon},\epsilon}^{GFF} }\E^{\mu_{m ,\epsilon}^{\otimes N} } \bigg [\text{exp}(-\tilde{V}_{\epsilon} )\text{exp}(R_{\epsilon}) \bigg ], \nonumber 
\end{equation}
which we insert into \eqref{e144} to obtain
\begin{equation}
\mathcal{H}(\tilde{\nu}_{\epsilon} \mid \nu_{\epsilon} )=\text{log} \bigg ( \frac {\E^{\mu_{m ,\epsilon}^{\otimes N} } \big [\text{exp}(-\tilde{V}_{\epsilon} )\text{exp}(R_{\epsilon}) \big ]}{\E^{\mu_{m ,\epsilon}^{\otimes N} } \big [\text{exp}(-\tilde{V}_{\epsilon} ) \big ]} \bigg )+R_{\epsilon}.
\end{equation}
We are now ready to give the argument for \eqref{e132}.  First we observe that both $A_{\epsilon}$ and $B_{\epsilon}$ are $O(\epsilon^{2})$ by \eqref{e131}.  By Taylor approximation of the logarithm, taking into account that both the numerator and denominator inside the $\text{log}$ are bounded away from zero by Jensen's inequality, it holds
\begin{align}
\bigg |\text{log} \bigg ( \frac {\E^{\mu_{m ,\epsilon}^{\otimes N} } \big [\text{exp}(-\tilde{V}_{\epsilon} )\text{exp}(R_{\epsilon}) \big ]}{\E^{\mu_{m ,\epsilon}^{\otimes N} } \big [\text{exp}(-\tilde{V}_{\epsilon} ) \big ]} \bigg ) \bigg |  \nonumber 
&\lesssim \E^{\mu_{m ,\epsilon}^{\otimes N}} \big [\text{exp}(-\tilde{V}_{\epsilon} ) \big |\exp(R_{\epsilon})-1 \big | ]\\
&\lesssim \big ( \E^{\mu_{m ,\epsilon}^{\otimes N}} \big [ \big |\exp(R_{\epsilon})-1 \big |^{2} ] \big )^{\frac{1}{2} } \to 0
\end{align}
In the last two steps, we first applied H\"{o}lder's inequality and used that $\tilde{\textbf{Z}}_{\epsilon}$ is bounded uniformly in $\epsilon$ for any value of the coupling constant $\lambda$, which follows from Lemma \ref{lem:partitionFunction}. Next we expanded the square, reducing the problem to showing $\E^{\mu_{m ,\epsilon}^{\otimes N}} \exp(R_{\epsilon}) \to 1$, which follows from Lemma \ref{lem:expmoments}.  By a similar argument using H\"{o}lder's inequality, Lemma \ref{lem:partitionFunction} Lemma \ref{lem:expmoments} and Appendix \ref{app:C}, we easily find that $R_{\epsilon} \to 0$, completing the argument for \eqref{e132}.
\end{proof}
The above proof hinges on the control of the exponential of $R_\eps$.  This result is well known and can be proved in different ways, see for example \cite{guerra1976boundary} or \cite{bauerschmidt2025holley}.  We give a different argument here emphasizing again the efficacy of Talagrand's inequality, which may be of independent interest.
\begin{lemma}\label{lem:expmoments}
There exists $C>0$ such that for any $N\in \N$, $A<\frac{m}{N}$ it holds
$$
1\le \E_{\mu_{m}^{\otimes N}}\left[e^{A \int_{\Lambda_{L} }(\|\Phi_\eps\|_{\R^{N}}^2-C_{\epsilon,L}^{m}N)dx}\right]\le e^{N C A^2  }.
$$
\end{lemma}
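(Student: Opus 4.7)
My plan for the proof is as follows. The lower bound is immediate from Jensen's inequality applied to the convex function $\exp$: by the very definition \eqref{e122} of $C_{\eps,L}^m$, the integrand in the exponent is centered under $\mu_{m,L}^{\otimes N}$, so $\E e^{AY} \ge e^{A\,\E Y}=1$. For the upper bound, the first step I would take is to exploit the product structure of the reference measure. Under $\mu_{m,L}^{\otimes N}$ the $N$ components $\Phi_1,\dots,\Phi_N$ are i.i.d.\ copies of the scalar massive GFF $\mu_{m,L}$, and the exponent decouples as a sum over components, so
\begin{equation*}
\E e^{A\int_{\Lambda_L}(\|\Phi_\eps\|_{\R^N}^2 - N C_{\eps,L}^m)\,dx} \;=\; \Bigl(\E_{\mu_{m,L}} e^{AX}\Bigr)^N, \qquad X \eqdef \int_{\Lambda_L}(\Phi_\eps^2 - C_{\eps,L}^m)\,dx.
\end{equation*}
This reduces the lemma to the scalar estimate $\log\E e^{AX}\le C A^2$ with a constant $C=C(m,L)$ independent of $\eps$.

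To establish the scalar bound, I would diagonalize $X$ via the Karhunen--Lo\`eve expansion in the eigenbasis of $(-\Delta+m^2)^{-1}$ on $\Lambda_L$. Writing $\Phi = \sum_\xi (m^2+|\xi|^2)^{-1/2}\,g_\xi\,\phi_\xi$ with $\phi_\xi(x)=L^{-1}e^{i\xi\cdot x}$ an $L^2$-orthonormal Fourier basis and $g_\xi$ i.i.d.\ standard Gaussians (with the usual reality conventions), a direct computation yields
\begin{equation*}
X \;=\; \sum_{\xi\in\Lambda_L^*} c_\xi\bigl(g_\xi^2-1\bigr), \qquad c_\xi \eqdef \frac{\eta(\eps\xi)^2}{m^2+|\xi|^2} \;\le\; m^{-2},
\end{equation*}
and by independence across $\xi$ together with the Gaussian identity $\E e^{s(g^2-1)}=e^{-s}/\sqrt{1-2s}$ (valid for $s<1/2$),
\begin{equation*}
\log\E e^{AX} \;=\; -\sum_{\xi\in\Lambda_L^*}\Bigl(A c_\xi + \tfrac12\log(1-2A c_\xi)\Bigr).
\end{equation*}

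The conclusion will then follow from the elementary Taylor estimate $-u/2-\tfrac12\log(1-u)\le u^2$ on $u\in[0,1/2]$ (a consequence of $f(0)=f'(0)=0$ together with the bound $(1-u)^{-2}/2\le 2$ on the second derivative), applied with $u=2A c_\xi$: as soon as $A\le m^2/4$ (which is implied by the hypothesis $A<m/N$ whenever $Nm\ge 4$, and in particular covers the regime $A=O(\eps^2)$ actually used in the application to Lemma~\ref{lem:changingMass}),
\begin{equation*}
\log\E e^{AX}\;\le\;4A^2\sum_{\xi\in\Lambda_L^*}c_\xi^2\;\le\;4A^2\sum_{\xi\in\Lambda_L^*}(m^2+|\xi|^2)^{-2}\;\le\; C(m,L)\,A^2,
\end{equation*}
the last series being controlled uniformly in $\eps$ by Riemann-sum comparison with $\int_{\R^2}(m^2+|\xi|^2)^{-2}d\xi=\pi/m^2$. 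Raising to the $N$-th power then delivers the stated bound. The only mild subtlety in the argument is the uniformity of the series estimate in the ultraviolet cutoff $\eps$, and this is automatic here because the Wick counter-term $C_{\eps,L}^m$ is designed to cancel precisely the $\eps$-divergent zeroth-order contribution in the Taylor expansion of the log, leaving only the finite second-order remainder.
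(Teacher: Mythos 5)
Your proposal is correct but takes a genuinely different route from the paper. The paper's proof is variational: it writes $\log\E e^{AY}$ via the Donsker--Varadhan formula as a supremum over probability measures of $-\mathcal H(\P\mid\mu^{\otimes N})+A\,\E^{\P}Y$, replaces the entropy by the Wasserstein distance using the Feyel--\"Ust\"unel/Talagrand inequality (\cref{prop:TAL}), introduces the coupling $\Phi=Y+Z$ to split the linear term into a diagonal $\int\|Y\|^2$ piece and a cross term $\int\langle Y,Z\rangle$, controls the latter by Cauchy--Schwarz against the Gaussian $H^{-1}_m$ moment, and closes with Young's inequality. You instead tensorize over the $N$ i.i.d.\ components, diagonalize the scalar quadratic form in the Fourier/Karhunen--Lo\`eve basis so $X=\sum_\xi c_\xi(g_\xi^2-1)$, use the exact chi-square log-MGF $\log\E e^{s(g^2-1)}=-s-\tfrac12\log(1-2s)$, and conclude with the elementary bound $-u/2-\tfrac12\log(1-u)\le u^2$ and a Riemann-sum estimate on $\sum_\xi(m^2+|\xi|^2)^{-2}$. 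Your argument is more explicit and self-contained (no Talagrand, no coupling), and in principle yields sharp constants; the paper's argument is deliberately chosen to showcase the transport-entropy machinery and would adapt more readily to a non-Gaussian reference measure where no spectral decomposition is available. Both proofs in fact impose the same implicit smallness constraint ($A\lesssim m^2$, needed for the chi-square MGF / for the coefficient of $W^2$ to stay negative), which the stated hypothesis $A<m/N$ does not quite furnish when $Nm$ is small; you correctly flag that this is immaterial for the application since only $A=O(\eps^2)$ is ever used, and the paper's own proof has the same latitude.
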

\begin{proof}
Let us use the short-hand notation $:\|\Phi_\eps\|_{\R^{N}}^2: \eqdef\|\Phi_\eps\|_{\R^{N}}^2-NC_{\epsilon,L}^{m}$.  We prove this by considering a variation on the Barashkov-Gubinelli \cite{barashkov2023variational} method for bounding exponential moments. Namely, we notice the identity
    \begin{align}
        &1 \leq \log \E_{\mu_{m}^{\otimes N}}\left[e^{A \int_{\T_L^2}\|\Phi_\eps \|_{\R^{N}}^{2}\;dx}\right] \nonumber \\
        \quad &= \sup_{\P\in\mathcal{P}(H^{-s}(\Lambda_{L})^{N})}-\mathcal{H}(\P|\mu_{m}^{\otimes N})+  A\E^{\P}\bigg [\int_{\T_L^2}:\|\Phi_\eps\|_{\R^{N}}^2:\;dx \bigg ]\\
        &\le\sup_{P\in \mathcal{P}(H^{-s})}-\frac{1}{2}W^2_{H^1_{m}(\Lambda_{L})^N}(\P,\mu_{m}^{\otimes N})+ A\E^{\P}\bigg [\int_{\T_L^2}:\|\Phi_\eps\|_{\R^{N}}^{2}:\;dx \bigg ],
    \end{align}
where we used Talagrand's inequality \cref{prop:TAL} with any regularity exponent $s<0$. Using the definition of the Wasserstein distance in terms of couplings, we can find a probability measure $\Pi \in \mathcal{P}(H^{-s}(\Lambda_{L})^{N} \times H^{-s}(\Lambda_{L})^{N}  )$ with marginals $\P$ and $\mu_{m,L}^{\otimes N}$ such that
$$
W^2_{H^1_{m}(\Lambda_{L})^N}(\P,\mu_{m}^{\otimes N})=\frac{1}{N}\sum_{i=1}^N \E^{\Pi}\|Y\|_{H^1_{m}}^2,
$$
where we defined the random variable $(\Phi,Z) \in H^{-s} \times H^{-s} \mapsto Y \eqdef \Phi-Z$.  As a result, we find
\begin{align}
A\E^{\P}\bigg [\int_{\T_L^2}:\|\Phi_\eps\|_{\R^{N}}^{2}:\;dx \bigg ]&=A \int_{H^{-s} \times H^{-s}}\int_{\T_L^2}:\|\Phi_\eps\|_{\R^{N}}^{2}:\;dx d \Pi(\Phi,Z) \nonumber \\
&=A\E^{\Pi} \bigg [ \int_{\Lambda_{L}}\|Y\|_{\R^{N}}^{2}+2 \langle Y,Z \rangle_{\R^{N}} \bigg],
\end{align}
where we used that 
\begin{equation}
\int_{H^{-s} \times H^{-s}} \int_{\Lambda_{L}}(\|Z_{\epsilon}\|_{\R^{N}}^{2}-NC_{\epsilon,L}^{m})dx d\Pi(\Phi,Z)=\E^{\mu_{m,L}^{\otimes N}}\int_{\Lambda_{L}}:\|Z_{\epsilon}\|_{\R^{N}}^{2}: dx=0,
\end{equation}
since the $Z$ marginal of $\Pi$ is $\mu_{m,L}^{\otimes N}$.  Now observe that
\begin{align}
2A\E^{\Pi} \bigg [ \int_{\Lambda_{L}}2 \langle Y,Z \rangle_{\R^{N}}dx \bigg] &\leq 2A \sum_{i=1}^{N}\E^{\Pi}[\|Z_{i}\|_{H^{-1}_{m}(\Lambda_{L})}\|Y_{i}\|_{H^{1}_{m}(\Lambda_{L})}] \nonumber \\
& \leq 2A \bigg ( \sum_{i=1}^{N}\E^{\Pi}\|Z_{i}\|_{H^{-1}_{m}(\Lambda_{L})}^{2} \bigg )^{\frac{1}{2} }\bigg ( \sum_{i=1}^{N}\E^{\Pi}\|Y_{i}\|_{H^{1}_{m}(\Lambda_{L})}^{2}  \bigg )^{\frac{1}{2} }, 
\end{align}
so by Young's inequality we obtain the desired conclusion.
\end{proof}
We now introduce the gap equation for $m_{*}$, which is motivated by taking the limit $L,N \to \infty$ on both sides of \eqref{e91}.  To this end, observe that for any fixed $\gamma^{2}>0$ it holds
\begin{align}
&\lim_{L,N \to \infty}\left(1+\frac{2}{N}\right)\frac{1}{L^{2}}\sum_{ \xi \in \Lambda_{L}^{*}} \left(\frac{1}{1+|\xi|^{2} }-\frac{1}{\gamma^{2}+|\xi|^{2} } \right ) \nonumber \\
&=(2\pi)^{-2}\int_{\R^{2}} \left(\frac{1}{1+|\xi|^{2} }-\frac{1}{\gamma^{2} +|\xi|^{2} } \right )d \xi 
%&=(2\pi)^{-1}\lim_{R \to \infty}\int_{0}^{R} \left(\frac{r}{1+r^{2} }-\frac{r}{m+r^{2} } \right )dr=-\frac{1}{4 \pi} \bigg ( \log 1-\log m_{*} \bigg \nonumber ) \\
=\frac{1}{2 \pi} \log \gamma \label{e162}.
\end{align}
Hence, its natural to define  $m_{*}$ via the non-linear equation
\begin{equation}
\frac{m_{*}^{2}}{\lambda}+\frac{1}{2 \pi} \log m_{*}=-\beta \label{e127}.
\end{equation}
Notice that \eqref{e127} clearly has a unique solution $m_{*} \in (0,1)$ for any $\lambda>0$ and $\beta \geq 0$, as a consequence of the Intermediate Value Theorem.  Furthermore, combining the equation and the fact that $m_{*}^{2} \in (0,1)$, we easily find that 
\begin{equation}
\exp(-2\pi(\beta+\lambda^{-1}) ) \leq m_{*} \leq \exp(-2\pi \beta),\label{eq:limitMassLB}
\end{equation}
which implies \eqref{ref:massScaling}.  It will be important for our analysis that not only does $m$ converge to $m_{*}$ in the limit $L,N \to \infty$, but more specifically $m$ and $m_{*}$ are close to order $\frac{1}{N}$ for large $L$. 
This is established in the following elementary lemma.
\begin{lemma} \label{lem:gapEq}
For every $\lambda>0$ and $\beta \geq 0$, there exist unique solutions $m=m(N,L,\lambda,\beta)$ and $m_{*}=m_{*}(\lambda,\beta)$ in $(0,1)$ satisfying 
\eqref{e91} and \eqref{e127} respectively.  Furthermore,
\begin{equation}
  0 \leq \frac{m^{2}-m_{*}^{2}}{\lambda}\lesssim   \frac{1}{m_{*}} \cdot\frac{1}{L}+|\ln m_{*}| \cdot \frac{1}{N}.     \label{e152}
\end{equation}
\end{lemma}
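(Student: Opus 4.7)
The plan combines a monotone comparison of the two gap equations (for the sign) with a Poisson summation identity and Bessel function asymptotics (for the quantitative bound). Existence and uniqueness of $m,m_*\in(0,1)$ are already noted in Remark~\ref{rem:gapExUn} and in the paragraph preceding the statement, so the main task is to prove \eqref{e152}. Set
\[
F_L(\gamma)\eqdef\frac{\gamma^2}{\lambda}+\Bigl(1+\tfrac{2}{N}\Bigr)S_L(\gamma),\qquad F_\infty(\gamma)\eqdef\frac{\gamma^2}{\lambda}+\frac{1}{2\pi}\log\gamma,
\]
with $S_L(\gamma)\eqdef\frac{1}{L^2}\sum_{\xi\in\Lambda_L^*}\bigl(\frac{1}{1+|\xi|^2}-\frac{1}{\gamma^2+|\xi|^2}\bigr)$; the gap equations read $F_L(m)=F_\infty(m_*)=-\beta$, and both functions are strictly increasing on $(0,1)$.

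For the lower bound, I would first establish the pointwise comparison $F_L(\gamma)\le F_\infty(\gamma)$ on $(0,1)$. Writing $\frac{1}{1+|\xi|^2}-\frac{1}{\gamma^2+|\xi|^2}=-\int_{\gamma^2}^1(s+|\xi|^2)^{-2}ds$, interchanging sum and integral, and applying Poisson summation with the standard Fourier pair $\mathcal{F}\{(s+|\cdot|^2)^{-2}\}(x)=\frac{\pi|x|}{\sqrt s}K_1(\sqrt s\,|x|)$ gives
\[
\frac{1}{L^2}\sum_{\xi\in\Lambda_L^*}\frac{1}{(s+|\xi|^2)^2}-\frac{1}{4\pi s}=\frac{1}{4\pi}\sum_{k\in\Z^2\setminus\{0\}}\frac{L|k|}{\sqrt s}\,K_1\!\bigl(\sqrt s\,L|k|\bigr)\ge 0,
\]
since $K_1>0$. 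Integrating in $s\in(\gamma^2,1)$ yields $S_L(\gamma)\le \frac{1}{2\pi}\log\gamma$, and combined with $S_L(\gamma)\le 0$ for $\gamma\le 1$ and $1+\tfrac{2}{N}>1$ this gives $F_L(\gamma)\le F_\infty(\gamma)$ on $(0,1)$. Evaluating at $\gamma=m_*$ gives $F_L(m_*)\le F_\infty(m_*)=-\beta=F_L(m)$, hence $m\ge m_*$ by strict monotonicity of $F_L$.

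For the upper bound, I would subtract the two gap equations and rearrange to
\[
\frac{m^2-m_*^2}{\lambda}+\frac{\log m-\log m_*}{2\pi}\;=\;E_L(m)-\frac{2}{N}S_L(m),\qquad E_L(\gamma)\eqdef\frac{1}{2\pi}\log\gamma-S_L(\gamma)\ge 0.
\]
Because $m\ge m_*$, both terms on the left-hand side are non-negative, so $(m^2-m_*^2)/\lambda\le E_L(m)+\frac{2}{N}|S_L(m)|$. Integrating $\frac{d}{da}K_0(aL|k|)=-L|k|K_1(aL|k|)$ in the Poisson identity above yields the clean representation
\[
E_L(\gamma)=\frac{1}{2\pi}\sum_{k\in\Z^2\setminus\{0\}}\bigl[K_0(\gamma L|k|)-K_0(L|k|)\bigr].
\]
Standard Bessel asymptotics ($K_0(z)\lesssim|\log z|$ near zero, $K_0(z)\lesssim e^{-z}/\sqrt z$ at infinity) combined with a split of the $k$-sum according to whether $mL|k|\lessgtr 1$ then give $E_L(m)\lesssim(mL)^{-1}$. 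Finally $|S_L(m)|\le\frac{|\log m|}{2\pi}+E_L(m)$ together with $|\log m|\le|\log m_*|$ (from $m_*\le m\le 1$) deliver
\[
\frac{m^2-m_*^2}{\lambda}\lesssim\frac{1}{mL}+\frac{|\log m_*|}{N}\lesssim\frac{1}{m_*L}+\frac{|\log m_*|}{N},
\]
which is \eqref{e152}.

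The main technical hurdle is the quantitative Bessel estimate $E_L(m)\lesssim (mL)^{-1}$: the regime $mL\lesssim 1$ is the delicate one, because the $\asymp(mL)^{-2}$ lattice modes lying inside the ``peak'' $|k|\lesssim(mL)^{-1}$ each carry a logarithmic weight $K_0(mL|k|)\sim|\log(mL|k|)|$, and only a careful accounting (together with the crude cap $(m^2-m_*^2)/\lambda\le 1/\lambda$ available from $m,m_*\in(0,1)$) shows their aggregate contribution respects the claimed scaling.
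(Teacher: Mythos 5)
Your lower-bound argument takes a genuinely different and arguably cleaner route than the paper. The paper establishes $m\geq m_*$ by differentiating the finite-volume gap equation in $L$, invoking the Gaussian Poisson summation identity in Lemma~\ref{lem:PoissonSum} to show $m$ is decreasing in $L$, passing to the limit $m_{**}=\lim_{L\to\infty}m$, and then comparing $m_{**}$ to $m_*$. You instead prove the pointwise comparison $F_L\leq F_\infty$ on $(0,1)$ directly, by representing the summand as an integral of $(s+|\xi|^2)^{-2}$ and applying Poisson summation at each $s$: the correction to $\frac{1}{4\pi s}$ is a manifestly positive sum of $K_1$'s, so $S_L(\gamma)\leq\frac{1}{2\pi}\log\gamma$ and hence $F_L(m_*)\leq F_\infty(m_*)=F_L(m)$. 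This avoids the need for the auxiliary quantity $m_{**}$ and the differentiation step. The rearrangement for the upper bound is the same as the paper's, with $|S_L(m)|\leq\frac{|\log m|}{2\pi}+E_L(m)$ and $|\log m|\leq|\log m_*|$ producing the $\frac{|\log m_*|}{N}$ term correctly; the difference is that you estimate the positive error $E_L(\gamma)=\frac{1}{2\pi}\sum_{k\neq 0}[K_0(\gamma L|k|)-K_0(L|k|)]$ by Bessel asymptotics, while the paper uses the Riemann-sum comparison Lemma~\ref{lem:RiemannSums}.

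One step, as you yourself flag, does not actually close: the claim $E_L(m)\lesssim(mL)^{-1}$ is false in the regime $mL\lesssim 1$. The $\asymp(mL)^{-2}$ lattice points $1\leq|k|\lesssim(mL)^{-1}$ contribute $\sum K_0(mL|k|)\approx\frac{2\pi}{(mL)^2}\int_0^1 uK_0(u)\,du\asymp(mL)^{-2}$ (the $u\log(1/u)$ weight integrates, so there is no extra log, but the power is $-2$, not $-1$). Your proposed rescue via the crude cap $\frac{m^2-m_*^2}{\lambda}\leq\frac{1}{\lambda}$ only recovers $\frac{1}{\lambda}\lesssim\frac{1}{m_*L}$ when $m_*L\lesssim\lambda$, which fails for $\lambda$ small. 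That said, you should know that the paper's own Riemann-sum estimate \eqref{eq:lnm} has exactly the same feature: the $f(0)$ contribution there is $\frac{1}{L^2}\cdot\frac{1-m^2}{m^2}$ when applied with the correct argument $m^2$ from \eqref{e91}, and bounding it by $\frac{1}{Lm_*}$ again implicitly requires $Lm^2\gtrsim m_*$. Since Lemma~\ref{lem:gapEq} is only consumed downstream in $\limsup_{L\to\infty}$ form (in Lemma~\ref{lem:limitL}), where the $L$-term vanishes and only the $\frac{|\log m_*|}{N}$ term survives, this imprecision is harmless in either proof; but you should state the $E_L$ bound honestly as $\lesssim\frac{1}{(mL)^2}+\frac{1}{mL}$ rather than assert $\lesssim\frac{1}{mL}$ and defer the regime $mL\lesssim1$ to an accounting that does not in fact yield the stated power.
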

\begin{proof}
The existence and uniqueness of the solution to \ref{e91} via the Intermediate Value Theorem was argued in Remark \ref{rem:gapExUn} above.  We start by arguing the lower bound $m \geq m_{*}$. Let us define the function $h_{L}:(0,1] \to (-\infty,0]$ via  \begin{align}
h_{L}(m^{2} ) \eqdef \frac{1}{L^{2}}\sum_{ \xi \in \Lambda_{L}^{*}} \left(\frac{1}{1+|\xi|^{2} }-\frac{1}{m^{2}+|\xi|^{2} } \right) =\sum_{k \in \Z^{2}} \bigg (\frac{1}{ L^{2}+|2\pi k|^{2} }- \frac{1}{m^{2}L^{2}+|2\pi k|^{2} } \bigg ),
%\frac{1}{L^{2}}\sum_{\xi \in \Lambda_{L}^{*} }  \frac{m^{2}-1}{ (1+|\xi|^{2})(m^{2} +|\xi|^{2} ) }, \nonumber
\end{align}
so that the gap equation \eqref{e91} for $m$ can be written as
\begin{equation}
 \frac{m^{2}}{\lambda}+\bigg (1+\frac{2}{N} \bigg )h_{L}(m^{2})=-\beta \label{e146}.  
\end{equation}
We first claim that $m$ is a decreasing function of $L$, and to this end we differentiate \eqref{e146} with respect to $L$ to find
\begin{align}
\frac{\partial m^{2}}{\partial L} \bigg ( \frac{2}{\lambda}+L^{2}\sum_{k \in \Z^{2}}\frac{1}{(m^{2}L^{2}+|2\pi k|^{2} )^{2} } \bigg )&=2L \sum_{k \in \Z^{2}}\bigg ( \frac{1}{(L^{2}+|2\pi k|^{2})^{2}} -\frac{m^{2}}{(m^{2}L^{2}+|2\pi k|^{2})^{2}} \bigg) \nonumber. 
\end{align}
The RHS is non-positive due to Lemma \ref{lem:PoissonSum} and in particular the monotonicity of the RHS of \eqref{eq:poissonSum} with respect to $m^{2}$. Defining
\begin{equation}
m_{**}\eqdef \lim_{L \to \infty} m \label{e163}
\end{equation}
we find that $m \geq m_{**}$ and $m_{**} \in (0,1)$ is the unique solution
\begin{equation}
\frac{m_{**}^{2}}{\lambda}+(1+\frac{2}{N} ) \cdot \frac{1}{2\pi}\ln m_{**}=-\beta \label{e161}
\end{equation}
Hence, we find that
\begin{equation}
\frac{m_{*}^{2}}{\lambda}+\cdot \frac{1}{2\pi}\ln m_{*}=-\beta \leq \frac{m_{**}^{2}}{\lambda}+\cdot \frac{1}{2\pi}\ln m_{**},
\end{equation}
where we used \eqref{e161} and $\ln m_{**} \leq 0$. By monotonicity, this implies $m_{**} \geq m_{*}$ and hence $m \geq m_{*}$. We now turn to the estimate \eqref{e152} and notice that by an elementary Riemann sum estimate \eqref{e147}, taking into account \eqref{e162}, we obtain
\begin{align}\label{eq:lnm}
\bigg | h_{L}(m)-\frac{1}{4 \pi}\ln m \bigg | &\lesssim  \frac{1}{L^{2}}\frac{1-m}{m}+\frac{1}{L}\int_{0}^{\infty}\frac{1-m}{ (1+\xi_{1}^{2})(m+\xi_{1}^{2}) }d\xi_{1} \nonumber \\
& \lesssim \frac{1}{L} \bigg ( \frac{1}{m}+\frac{1}{m(m+1)} \bigg ) \lesssim \frac{1}{L} \cdot \frac{1}{m} \lesssim \frac{1}{L} \cdot \frac{1}{m_{*}}.
%\lesssim_{\lambda,\beta} \frac{1}{L},
\end{align}
Hence, taking the difference of
\eqref{e91} and \eqref{e127} and using that $\ln(\frac{m}{m_{*}} ) \geq 0$ we find
\begin{equation}
\frac{m^{2}-m_{*}^{2} }{\lambda} \leq \frac{1}{2\pi} \ln m-h_{L}(m^{2}) \leq \frac{1}{2\pi}\ln m- (1+\frac{2}{N})h_{L}(m^{2}) \lesssim \frac{1}{L} \cdot \frac{1}{m_{*}}+\frac{|\ln m_{*}|}{N},
\end{equation}
which yields \eqref{e152}.
\end{proof}

\subsection{Statement of the Main Result}
Given the above preliminaries on the finite volume approximation $\nu^{N}_{L}$, we now turn to our main result on the Linear Sigma Model on $\R^{2}$ corresponding to the non-convex classical potential
\begin{equation}
\Phi \in \mathcal{D}(\R^{2})^{N} \mapsto  \frac{1}{2}\int_{\R^{2}}\|\nabla \Phi(x)\|_{\R^{N}}^{2}dx+\frac{\lambda}{4N}\int_{\R^{2}} (\|\Phi(x)\|_{\R^{N}} ^{2}-N \beta \big )^{2} dx. \label{e170}
\end{equation}
We only consider periodic boundary conditions in this work, so we use the terminology \textit{periodic state} for subsequential limits of the sequence $\nu^{N}_{L}$ as $L \to \infty$.  
%We are primarily interested in the behavior of $\nu^{N,\beta,\lambda}_{L}$ for large $N$.  While it is possible to use our methods to analyze the large $N$ limit for $L$ fixed, our main interest is in the large $N$ behavior of the corresponding infinite volume limit(s). 
\begin{definition}
A probability measure $\nu^{N}$ on $\mathcal{D}'(\R^{2})^{N}$ is defined to be a periodic state of
the Linear Sigma Model on $\R^{2}$ corresponding to the Wick renormalized classical potential \eqref{e170},
provided it is a subsequential limit in $\mathcal{P}(\mathcal{D}'(\R^{2})^{N}) $ of the sequence $(\nu^{N}_{L})_{L>0}$ as $L \to \infty$.
\end{definition}
We are now prepared to state our main result.
\begin{theorem} \label{mainthm:iv} 
Let $\beta \geq 0$, $\lambda>0$ and $m_{*} \in (0,1)$ be the unique solution to the gap equation \eqref{e127}.  There exists a positive constant $C(\lambda,\beta)$ independent of $N$ such that for any periodic state $\nu^{N}$ of the Linear Sigma Model on $\R^{2}$ corresponding to the classical potential \eqref{e170} satisfies
\begin{equation} \label{eq:W2decay}
W_{H^{1}_{m_{*}}(\rho)^{N} }( \nu^{N}, \mu_{m_{*}}^{\otimes N} ) \leq \frac{C} {N^{\frac{1}{2}}}.
\end{equation}
\end{theorem}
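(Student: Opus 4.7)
The plan is to exploit the identity $\nu^{N}_{L} = \nu^{N}_{m,L}$ granted by \cref{lem:changingMass}, where $m = m(N,L) \in (0,1)$ solves the gap equation \eqref{e91}, together with Talagrand's inequality (\cref{prop:FreeFieldTalagrand}) applied on the torus to the pair $(\nu^{N}_{m,L}, \mu^{\otimes N}_{m,L})$. The triangle inequality reduces the task to estimating
\begin{equation}
W_{H^{1}_{m_{*}}(\rho)^{N}}(\nu^{N}_{m,L}, \mu^{\otimes N}_{m,L}) \quad \text{and} \quad W_{H^{1}_{m_{*}}(\rho)^{N}}(\mu^{\otimes N}_{m,L}, \mu_{m_{*}}^{\otimes N}), \nonumber
\end{equation}
viewed as elements of $\mathcal{P}(\mathcal{D}'(\R^{2})^{N})$ via periodic extension. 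Since $m \geq m_{*}$ by \cref{lem:gapEq}, the first quantity is controlled via the stronger $H^{1}_{m}(\rho)^{N}$ cost.

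The crux is a relative entropy bound of optimal volume growth,
\begin{equation}
\mathcal{H}(\nu^{N}_{m,L} \mid \mu^{\otimes N}_{m,L}) \leq C(\lambda,\beta)\, L^{2}, \nonumber
\end{equation}
uniformly in $N$ and $L$. The bound furnished by \cref{lem:TalagrandApp} is insufficient because its $L$-dependence is super-linear in $L^{2}$. Instead, one decomposes the Wick-renormalized quartic action over unit boxes and invokes the chessboard/checkerboard estimates from \cref{app:C}: these factor the partition function ratio into a product of per-box contributions, each uniformly controlled in $N$ by \cref{lem:partitionFunction} with $|\Lambda|=1$. Inserting this entropy bound into \cref{prop:FreeFieldTalagrand} yields $W^{2}_{H^{1}_{m}(\Lambda_{L})^{N}}(\nu^{N}_{m,L}, \mu^{\otimes N}_{m,L}) \lesssim L^{2}/N$.

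To pass from the torus bound to the weighted $\R^{2}$ bound uniformly in $L$, one leverages translation invariance of both $\nu^{N}_{m,L}$ and $\mu^{\otimes N}_{m,L}$. Given any coupling $\gamma$, symmetrize it via the joint translations $(X,Y) \mapsto (\tau_{y}X, \tau_{y}Y)$ for $y$ uniform on $\Lambda_{L}$, producing a coupling $\tilde{\gamma}$ that preserves both marginals. A change of variables, combined with the periodicity identity $\sum_{n \in \Z^{2}} \rho(\cdot + nL) = g_{L}(\cdot)$, where $g_{L}$ is $L$-periodic with $\int_{\Lambda_{L}} g_{L} = \|\rho\|_{L^{1}(\R^{2})}$, yields the exact equality
\begin{equation}
\E^{\tilde{\gamma}}\,\|X - Y\|^{2}_{H^{1}_{m}(\rho)^{N}} = \frac{\|\rho\|_{L^{1}(\R^{2})}}{L^{2}}\, \E^{\gamma}\,\|X - Y\|^{2}_{H^{1}_{m}(\Lambda_{L})^{N}}. \nonumber
\end{equation}
Taking infimum over $\gamma$ cancels the factor $L^{2}$, producing the $L$-uniform bound $W^{2}_{H^{1}_{m}(\rho)^{N}}(\nu^{N}_{m,L}, \mu^{\otimes N}_{m,L}) \lesssim 1/N$.

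For the second Wasserstein term, both measures are Gaussian with translation-invariant covariance. Since $m \to m_{*}$ at the rate of \cref{lem:gapEq} and the periodically-extended covariance converges locally to the $\R^{2}$ Green's function $(-\Delta + m_{*}^{2})^{-1}$ as $L \to \infty$, a direct Gaussian coupling based on common white noise controls this term and drives it to zero. Taking $L \to \infty$ along the subsequence defining the periodic state $\nu^{N}$ and invoking lower semicontinuity of $W_{H^{1}_{m_{*}}(\rho)^{N}}$ under weak convergence in $\mathcal{P}(\mathcal{D}'(\R^{2})^{N})$ delivers \eqref{eq:W2decay}. The principal obstacle is the sharpened volume scaling of the entropy: achieving the linear $L^{2}$ bound via the chessboard decomposition, rather than the weaker bound of \cref{lem:TalagrandApp}, is precisely what enables the cancellation against the weight $\rho$ in the translation-averaged coupling.
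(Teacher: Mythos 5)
Your proposal follows essentially the same global strategy as the paper (identity $\nu^N_L = \nu^N_{m,L}$, Talagrand, chessboard/checkerboard entropy bound of optimal $L^2$ scaling, translation invariance to absorb the volume factor, lower semicontinuity). One genuine improvement: your translation-averaging identity for an \emph{arbitrary} coupling $\gamma$, yielding
$\E^{\tilde\gamma}\|X-Y\|^2_{H^1_m(\rho)^N}=\frac{\|\rho\|_{L^1(\R^2)}}{L^2}\,\E^{\gamma}\|X-Y\|^2_{H^1_m(\Lambda_L)^N}$,
is a cleaner route than the paper's \cref{lem:tranlation}, which constructs a translation-invariant \emph{optimal} coupling and then sums the per-cube cost against $\rho(k)$. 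Both produce the same bound, but your averaging argument sidesteps existence and measurability considerations for the optimal plan.

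There is, however, a gap in your treatment of the second Wasserstein term. You propose to bound $W_{H^1_{m_*}(\rho)^N}(\mu^{\otimes N}_{m,L},\mu^{\otimes N}_{m_*})$ directly, comparing the periodically extended torus GFF against the genuine plane GFF via a ``common white noise'' coupling, and you claim this ``drives it to zero.'' Neither statement holds as stated. First, as $L\to\infty$ with $N$ fixed, $m\to m_{**}\neq m_*$ (cf.\ \eqref{e163} and \cref{lem:gapEq}), and the residual $m_{**}^2-m_*^2\lesssim\lambda|\ln m_*|/N$ does \emph{not} vanish; the correct conclusion is that the term is $O(N^{-1/2})$, not zero. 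Second, a common-white-noise coupling between the periodic law $\mu_{m,L}$ and the non-periodic law $\mu_{m_*}$ is not well-defined without additional construction: the underlying Cameron--Martin spaces are incompatible (one is $L^2(\Lambda_L)$ with discrete spectrum, the other $L^2(\R^2)$). The paper avoids this by doing the triangle inequality \emph{entirely at the torus level} in \cref{prop:mainQuant}, comparing $\mu^{\otimes N}_{m,L}$ with $\mu^{\otimes N}_{m_*,L}$ via the explicit Gaussian entropy formula plus Talagrand (\cref{lem:limitL}), and only comparing a torus measure with a plane measure through the $L\to\infty$ lower-semicontinuity step at the very end. To repair your argument you would need to insert $\mu^{\otimes N}_{m_*,L}$ as an intermediate term, treat $W_{H^1_{m_*}(\rho)^N}(\mu^{\otimes N}_{m,L},\mu^{\otimes N}_{m_*,L})$ by your translation-averaging identity plus a torus Gaussian comparison, and then pass $\mu^{\otimes N}_{m_*,L}\to\mu^{\otimes N}_{m_*}$ via weak convergence and lower semicontinuity rather than via a direct coupling.
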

\begin{remark} \label{rem:symmetryUnbroken}
Note that the $O(N)$ symmetry is unbroken in the infinite volume limit $L \to \infty$ since it holds for each $\nu^{N}_{L}$ and therefore along any subsequential limit.  In particular, $P_{i}\nu^{N}=P_{1}\nu^{N}$ for all $i \in [N]$.  Hence, proceeding exactly as in Remark \ref{rem:subadditivity}, we find that \eqref{eq:W2decay} implies
\begin{equation}
    W_{H_{m_{*}}^{1} (\rho ) }(P_{i}\nu^{N} ,\mu_{m_{*}} ) \leq \frac{C}{N^{\frac{1}{2}}}.  \label{e160}
 \end{equation}
The estimate \eqref{e160} easily implies Corollary \ref{cor:doubleScalingLimit} taking into account that $m_{*}=m_{*}(\lambda)$ converges weakly to $\mu_{\gamma}$ with $\gamma=\text{exp}(-2\pi \beta)$ as $\lambda \to \infty$.  A similar sub-additivity argument, c.f. \cite{hauray2014kac} gives the proof for arbitrary $k$.
\end{remark}
The remainder of the article is devoted to the proof, which we split into two broad steps.  In a first step which we carry out in Section \ref{sec:FiniteVolumeOptimal}, we obtain the main quantitative estimate on $\nu^{N}_{L}$ as $\Lambda_{L} \to \R^{2}$.  In a second step performed in Section \ref{sec:qualitative}, we carry out the qualitative argument of coupling any limit point $\nu^{N}$ to $\mu_{m_{*}}^{\otimes N}$, the corresponding estimate following from lower semi-continuity and the main estimate in Section \ref{sec:FiniteVolumeOptimal}.
\subsection{Scaling Optimal Bounds on the Wasserstein Distance} \label{sec:FiniteVolumeOptimal}
We now state the main quantitative input required for the proof of the above result.  The key point is to have an estimate which scales optimally with the volume $L^{2}$, while simultaneously achieving the desired decay of order $\frac{1}{N}$ in the squared distance to the tensorized, massive GFF.  More precisely, the main goal in this section is to establish the following result.
\begin{proposition} \label{prop:mainQuant}
There exists a constant $C(\lambda,\beta )$ independent of $N$, such that
\begin{equation}
\limsup_{L \to \infty} \frac{1}{L^{2}} W_{H^{1}_{m_{*}}(\Lambda_{L})^{N} }(\nu^{N}_{L}, \mu_{m_{*},L}^{\otimes N} )^{2} \leq \frac{C}{N}.
\end{equation}
\end{proposition}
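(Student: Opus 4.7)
The plan is to combine the mass reparametrization of Lemma \ref{lem:changingMass} with Proposition \ref{prop:FreeFieldTalagrand} and a careful extraction of the volume scaling in the relative entropy. First, by Lemma \ref{lem:changingMass} we may identify $\nu^{N}_{L}=\nu^{N}_{m,L}$ where $m=m(N,L,\lambda,\beta)$ solves the gap equation, and by Lemma \ref{lem:gapEq} we have $m \geq m_{*}$ with $m\to m_{*}$ quantitatively. The monotonicity $\|\cdot\|_{H^{1}_{m_{*}}} \leq \|\cdot\|_{H^{1}_{m}}$ for $m \geq m_{*}$ and the triangle inequality give
\begin{equation}
W_{H^{1}_{m_{*}}(\Lambda_{L})^{N}}(\nu^{N}_{L},\mu_{m_{*},L}^{\otimes N}) \leq W_{H^{1}_{m}(\Lambda_{L})^{N}}(\nu^{N}_{m,L},\mu_{m,L}^{\otimes N})+W_{H^{1}_{m_{*}}(\Lambda_{L})^{N}}(\mu_{m,L}^{\otimes N},\mu_{m_{*},L}^{\otimes N}),
\end{equation}
so it suffices to control each term after dividing by $L^{2}$.

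For the first term, Proposition \ref{prop:FreeFieldTalagrand} yields
$W^{2}_{H^{1}_{m}(\Lambda_{L})^{N}}(\nu^{N}_{m,L},\mu_{m,L}^{\otimes N}) \leq \tfrac{2}{N}\mathcal{H}(\nu^{N}_{m,L}\mid \mu_{m,L}^{\otimes N})$, and the task reduces to upgrading Lemma \ref{lem:TalagrandApp} to a bound of the form $\mathcal{H}(\nu^{N}_{m,L}\mid\mu_{m,L}^{\otimes N}) \leq C(\lambda,m_{*})\, L^{2}$ uniformly in $N$. Writing the entropy as
\begin{equation}
\mathcal{H}(\nu^{N}_{m,L}\mid\mu_{m,L}^{\otimes N}) = -\lambda\,\E^{\nu^{N}_{m,L}}[F^{N}_{L,\epsilon}]-\log \mathbf{Z}^{N,\lambda}_{m,L,\epsilon}
\end{equation}
(in the $\epsilon \to 0$ limit, via lower semi-continuity), both terms must be shown to scale linearly in the volume uniformly in $N$ and $\epsilon$. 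This is the main obstacle, and it is where the checkerboard/chessboard machinery of Appendix \ref{app:C} enters. Chessboard applied to the unit-cell partition function bound of Lemma \ref{lem:partitionFunction} gives $|\log \mathbf{Z}^{N,\lambda}_{m,L,\epsilon}| \lesssim L^{2}$; combined with translation invariance of $\nu^{N}_{m,L}$, it also yields a pointwise bound $|\E^{\nu^{N}_{m,L}} \tfrac{1}{N}:\|Z_{\epsilon}(0)\|^{4}_{\R^{N}}:_{m}| \lesssim 1$, uniformly in $N,\epsilon$, after which summing over $\Lambda_{L}$ and using $|\Lambda_{L}|=L^{2}$ gives $|\E^{\nu^{N}_{m,L}}[F^{N}_{L,\epsilon}]| \lesssim L^{2}$.

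For the second term, I use the explicit Gaussian coupling $X_{i}=(-\Delta+m^{2})^{-\frac{1}{2}}W_{i}$, $Y_{i}=(-\Delta+m_{*}^{2})^{-\frac{1}{2}}W_{i}$ through a common $N$-vector of unit cylindrical white noise $W$. A direct Fourier computation, using $m \geq m_{*}$, yields
\begin{equation}
\frac{1}{L^{2}}\,\E\|X_{i}-Y_{i}\|^{2}_{H^{1}_{m_{*}}(\Lambda_{L})} \lesssim \frac{(m^{2}-m_{*}^{2})^{2}}{m_{*}^{2}},
\end{equation}
so Lemma \ref{lem:gapEq} gives $\tfrac{1}{L^{2}} W^{2}_{H^{1}_{m_{*}}}(\mu_{m,L}^{\otimes N},\mu_{m_{*},L}^{\otimes N}) \lesssim \lambda^{2}/(L^{2}m_{*}^{4})+\lambda^{2}(\ln m_{*})^{2}/(N^{2}m_{*}^{2})$, which is $O(N^{-2})$ after passing to $L \to \infty$ and is absorbed in $C/N$.

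Combining the two pieces, $\limsup_{L \to \infty}\tfrac{1}{L^{2}}W^{2}_{H^{1}_{m_{*}}(\Lambda_{L})^{N}}(\nu^{N}_{L},\mu_{m_{*},L}^{\otimes N}) \leq C(\lambda,\beta)/N$, yielding Proposition \ref{prop:mainQuant}. The hardest part is the volume-linear entropy bound: Hölder's inequality as in Lemma \ref{lem:TalagrandApp} produces constants that are exponential in $L^{2}$, and circumventing this requires exploiting reflection positivity via chessboard to factor the partition function and the observable $F^{N}_{L,\epsilon}$ over unit cells, preserving uniformity in both $N$ and the ultraviolet cutoff $\epsilon$.
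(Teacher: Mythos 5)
Your decomposition via monotonicity and triangle inequality, and your use of Talagrand's inequality to reduce the main term to a relative-entropy-density bound, mirror the paper exactly; the paper packages that entropy bound as Proposition \ref{prop:relEntropyDensity}, which you re-sketch from scratch. Your sketch correctly identifies the essential mechanism: Lemma \ref{lem:TalagrandApp} alone gives a constant exponential in $L^{2}$, and only the combination of translation invariance, the Chessboard estimate (Lemma \ref{lem:Chessboard}) to control $\E^{\nu^{N}_{m,L}}\int_{C_{0}}:\|\Phi\|^{4}:dx$, and the Checkerboard estimate (Lemma \ref{lemma:UniformCheckerboard}) to get the volume-linear partition-function bound of Lemma \ref{lem:partitionIV}, repairs the scaling. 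You conflate the two terms ``Chessboard'' and ``Checkerboard'' and you omit the key cancellation (the outer $2^{-2J}\sim L^{-2}$ exponent from the geometric mean absorbs the $\exp(C L^{2})$ prefactor coming from H\"older), but the intended argument is the one in the paper. One other small simplification you implicitly use: the paper drops $\log\mathbf{Z}^{N}_{L}\geq0$ rather than bounding its absolute value, whereas you propose bounding $|\log\mathbf{Z}|\lesssim L^{2}$; both are fine.

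The genuinely different step is your treatment of $W^{2}_{H^{1}_{m_{*}}}(\mu_{m,L}^{\otimes N},\mu_{m_{*},L}^{\otimes N})$. The paper's Lemma \ref{lem:limitL} computes the relative entropy between two massive GFFs on the torus and then applies Talagrand once more, arriving at a bound $\lesssim \lambda|\ln m_{*}|/N$. You instead write down the explicit iid coupling $X_{i}=(-\Delta+m^{2})^{-1/2}W_{i}$, $Y_{i}=(-\Delta+m_{*}^{2})^{-1/2}W_{i}$ driven by common white noise; since these operators commute, the per-mode error is $(m_{*}^{2}+|\xi|^{2})\big((m^{2}+|\xi|^{2})^{-1/2}-(m_{*}^{2}+|\xi|^{2})^{-1/2}\big)^{2}$, and summing and using $m\geq m_{*}$ gives $\frac{1}{L^{2}}\E\|X_{i}-Y_{i}\|^{2}_{H^{1}_{m_{*}}}\lesssim (m^{2}-m_{*}^{2})^{2}/m_{*}^{2}$. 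Via Lemma \ref{lem:gapEq}, in the limit $L\to\infty$ this is $O(N^{-2})$, strictly better than the paper's $O(N^{-1})$. The explicit coupling is both more elementary (no entropy formula for Gaussians, no second invocation of Talagrand) and sharper, though the improvement is immaterial here since the first term already caps the overall rate at $O(N^{-1})$. Your argument is correct.
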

The first lemma is a comparison of two massive GFFs with differing masses, keeping in mind that the mass $m$ depends on $L$.
\begin{lemma}\label{lem:limitL}
For $m$ and $m_{*}$ defined by the equations \eqref{e91} and \eqref{e127} respectively, the following inequality holds
\begin{equation}
\limsup_{L \to \infty} \frac{1}{L^{2}} W_{H_{m}^{1}(\Lambda_{L})^{N} }(\mu_{m_{*},L }^{\otimes N}, \mu_{m,L}^{\otimes N} )^{2} \lesssim \lambda  |\ln m_{*}| \cdot \frac{1}{N}  . \label{e154}
\end{equation}
\end{lemma}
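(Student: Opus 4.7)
The plan is to apply the infinite-dimensional Talagrand inequality from Proposition~\ref{prop:FreeFieldTalagrand} with reference measure $\mu_{m,L}^{\otimes N}$, whose mass $m$ matches the $H^1_m$ cost on the left-hand side. Combined with tensorization of relative entropy this gives
\begin{equation*}
W^2_{H^1_m(\Lambda_L)^N}(\mu_{m_*,L}^{\otimes N},\mu_{m,L}^{\otimes N}) \le \tfrac{2}{N}\mathcal{H}(\mu_{m_*,L}^{\otimes N}\mid\mu_{m,L}^{\otimes N}) = 2\mathcal{H}(\mu_{m_*,L}\mid\mu_{m,L}),
\end{equation*}
so the factor of $N$ cancels and the problem reduces to a single-component Gaussian-to-Gaussian relative entropy.

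Next I would compute this relative entropy explicitly via Fourier diagonalization. Both measures are centered and simultaneously diagonalized in the Fourier basis, with covariance eigenvalues $(m_*^2+|\xi|^2)^{-1}$ and $(m^2+|\xi|^2)^{-1}$ on mode $\xi\in\Lambda_L^*$. The standard Gaussian formula then gives
\begin{equation*}
2\mathcal{H}(\mu_{m_*,L}\mid\mu_{m,L}) = \sum_{\xi\in\Lambda_L^*}\bigl[\delta(\xi)-\log(1+\delta(\xi))\bigr],\qquad \delta(\xi)\eqdef\frac{m^2-m_*^2}{m_*^2+|\xi|^2}.
\end{equation*}
By Lemma~\ref{lem:gapEq} one has $m\ge m_*$, so $\delta(\xi)\ge0$ for every $\xi$; the elementary inequality $x-\log(1+x)\le x^2/2$ then applies termwise, and pulling out the common factor leaves a sum of $(m_*^2+|\xi|^2)^{-2}$ multiplied by $(m^2-m_*^2)^2/2$.

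The final step is to normalize by $L^2$ and pass to the limit. Since $(m_*^2+|\xi|^2)^{-2}$ is smooth and integrable on $\R^2$, a Riemann sum computation in polar coordinates yields
\begin{equation*}
\lim_{L\to\infty}\frac{1}{L^2}\sum_{\xi\in\Lambda_L^*}\frac{1}{(m_*^2+|\xi|^2)^2}=\frac{1}{(2\pi)^2}\int_{\R^2}\frac{d\xi}{(m_*^2+|\xi|^2)^2}=\frac{1}{4\pi m_*^2}.
\end{equation*}
Invoking the limsup form of Lemma~\ref{lem:gapEq}, the $(m_*L)^{-1}$ contribution to $m^2-m_*^2$ vanishes, giving $\limsup_L(m^2-m_*^2)\lesssim \lambda|\ln m_*|/N$. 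Putting everything together produces $\limsup_L L^{-2}W^2\lesssim (m^2-m_*^2)^2/m_*^2$, and the stated bound $\lesssim \lambda|\ln m_*|/N$ follows after extracting one factor $(m^2-m_*^2)/m_*^2$, which is uniformly bounded in the regime where the $1/N$-estimate is meaningful.

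I anticipate no serious obstacle: the Talagrand step is black-box, the Gaussian entropy calculation is algebraic, and Lemma~\ref{lem:gapEq} does the heavy lifting on the gap equation. The main bookkeeping item is the pairing of $\pm\xi$ Fourier modes when diagonalizing the covariance of a real-valued field, which only affects a universal constant and is absorbed into the $\lesssim$.
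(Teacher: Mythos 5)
Your proposal follows the paper's strategy to the letter: Talagrand plus tensorization, diagonalize the Gaussian-to-Gaussian relative entropy in Fourier, pass to a Riemann integral, and feed in the gap-equation estimate from Lemma~\ref{lem:gapEq}. There are two places where the details diverge, and they are worth recording.

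First, the relative-entropy formula. Your expression $\sum_\xi[\delta(\xi)-\log(1+\delta(\xi))]$ with $\delta(\xi)=\tfrac{m^2-m_*^2}{m_*^2+|\xi|^2}$ is the textbook $\mathcal{H}(\mu_{m_*,L}\mid\mu_{m,L})$. The paper's first display in this proof writes $\tfrac{|\xi|^2+m_*^2}{|\xi|^2+m^2}-1+\ln\tfrac{|\xi|^2+m^2}{|\xi|^2+m_*^2}$, which on inspection is the \emph{reversed} divergence $\mathcal{H}(\mu_{m,L}\mid\mu_{m_*,L})$: the trace and log-det pieces are swapped relative to the standard identity $\mathcal{H}(\mathcal{N}(0,\sigma_1^2)\mid\mathcal{N}(0,\sigma_2^2))=\tfrac12[\sigma_1^2/\sigma_2^2-1-\ln(\sigma_1^2/\sigma_2^2)]$. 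The two agree to leading (quadratic) order in $m^2-m_*^2$, so the final conclusion is unaffected, but you actually have the correct orientation where Talagrand needs it.

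Second, the sharpness of the bound. You apply $\delta-\log(1+\delta)\le\delta^2/2$ early, which collapses everything to a clean quadratic and produces $\limsup_L L^{-2}W^2\lesssim (m_{**}^2-m_*^2)^2/m_*^2$. Extracting one factor of $m_{**}^2-m_*^2\lesssim\lambda|\ln m_*|/N$ leaves a residual $(m_{**}^2-m_*^2)/m_*^2$. This residual is $O_{\lambda,\beta}(1)$ — it is controlled by $\lambda|\ln m_*|e^{4\pi(\beta+1/\lambda)}$ via \eqref{eq:limitMassLB} — but it is not universally $O(1)$, and you yourself flag this ("uniformly bounded in the regime where the $1/N$-estimate is meaningful"). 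The paper instead keeps the exact integral $\pi(m^2-m_*^2+2m_*^2\ln(m_*/m))$ and discards half the negative log term, arriving at a bound that is \emph{linear} in $m^2-m_*^2$ and thus gives a universal constant for \emph{their} (reversed) divergence. Under the paper's own convention that bare $\lesssim$ means a universal constant, your proof therefore falls short of \eqref{e154} as literally stated; but every downstream use of this lemma (Proposition~\ref{prop:mainQuant}, Theorem~\ref{mainthm:iv}) already allows constants depending on $\lambda,\beta$, so the weaker estimate you obtain is in fact sufficient. The remaining steps — the Riemann-sum evaluation $\lim_L L^{-2}\sum(m_*^2+|\xi|^2)^{-2}=\tfrac{1}{4\pi m_*^2}$, the tensorization, and the application of Proposition~\ref{prop:FreeFieldTalagrand} — are all correct.
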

\begin{proof}
Using independence, we first notice
$$
\mathcal{H}(\mu_{m_*,L}^{\otimes N}|\mu_{m,L}^{\otimes N})=N\mathcal{H}(\mu_{m_*,L}|\mu_{m,L}) \label{e153},
$$
hence we are left with estimating the right hand side. Using that the variance of both Gaussians diagonalize in the Fourier basis, and taking the limit of finite dimensional approximations (see for example Corollary 29 in \cite{schrofl2024relative}), we have the formula
\begin{eqnarray*}
\frac{1}{L^2}\mathcal{H}(\mu_{m_*,L}|\mu_{m,L})&=&\frac{1}{2L^2}\sum_{\xi\in \Lambda_{L}^{*} } \frac{|\xi|^2+m_*^{2}}{|\xi|^2+m^{2}}-1 +\ln\left(\frac{|\xi|^2+m^{2}}{|\xi|^2+m_*^{2}}\right)\\
&=&\frac{1}{2L^2}\sum_{\xi\in \Lambda_{L}^{*}} \frac{m_*^{2}-m^{2}}{|\xi|^2+m^{2}}+\ln\left(1+\frac{m^{2}-m_*^{2}}{|\xi|^2+m_*^{2}}\right).    
\end{eqnarray*}
where the series is easily seen to be absolutely convergent by Taylor expansion of $x \mapsto \ln(1+x)$ about the origin. Moreover, taking into account the identities
\begin{align}
\iint_{\R^2} \frac{m_*^{2}-m^{2}}{|\xi|^2+m^{2}}+\ln\left(1+\frac{m^{2}-m_*^{2}}{|\xi|^2+m_*^{2}}\right)\;d\xi
    &=  \pi \left(m^{2}-m_*^{2}+2 m_*^{2}\ln\left( \frac{m_*}{m} \right)\right) \nonumber \\
\int_{\R} \frac{m_*^{2}-m^{2}}{\xi_{1}^2+m^{2}}+\ln\left(1+\frac{m^{2}-m_*^{2}}{\xi_{1}^2+m_*^{2}}\right)\;d\xi_{1}
    &=\pi \frac{(m-m_* )^2}{m},    
\end{align}
we obtain from an elementary Riemmann sum estimate \eqref{e147}
\begin{eqnarray}
\frac{1}{L^2}\mathcal{H}(\mu_{m_*,L}|\mu_{m,L})&\lesssim & \left(m^{2}-m_*^{2}+ m_*^{2}\ln\left( \frac{m_*}{m} \right)\right)+\frac{1}{L} \frac{(m-m_* )^2}{m}\nonumber\\
&&\qquad+\frac{1}{L^2}\left(\frac{m_*^{2}-m^{2}}{m}+\ln\left( 1+\frac{m^{2}-m_*^{2}}{m_*}\right)\right)\label{eq:entropy}.
\end{eqnarray}
Recalling the definition \eqref{e163} of $m_{**}$ and using Lemma \ref{lem:gapEq}, we obtain
$$
\limsup_{L\to\infty}\frac{1}{L^2}\mathcal{H}(\mu_{m_*,L}|\mu_{m,L})\lesssim m_{*}^{2}-m_{**}^{2} \lesssim \lambda  |\ln m_{*}| \cdot \frac{1}{N},
$$
where we used \eqref{eq:limitMassLB}.  See \cite{schrofl2024relative} for the same formula when the masses are fixed and independent of $L$.  The desired estimate \eqref{e154} now follows from \eqref{e153} and Talagrand's inequality, Proposition \ref{prop:FreeFieldTalagrand}.
\end{proof}
The other key ingredient is to obtain bounds on $\mathcal{E}(\nu^{N}_{L} \mid \mu_{L}^{\otimes N} )$ that scale optimally with the volume $L^{2}$ of the torus, uniformly in $N$.  We start with a suitable uniform estimate on the partition function.
\begin{lemma} \label{lem:partitionIV}
There exists a constant $C_{\text{p.f.}}(\lambda,\beta)$ such that uniformly in $L,N$ it holds
\begin{equation}
0 \leq \frac{1}{L^{2}} \log \mathbf{Z}_{L}^{N,\lambda} \leq C_{\text{p.f.}}(\lambda,\beta). \label{e137}
\end{equation}
Furthermore, for every $\beta>0$, there exists a $\lambda_{0}(\beta)$ such that for $\lambda \geq \lambda_{0}$
\begin{equation}
C_{\text{p.f.}}(\lambda,\beta) \lesssim \lambda  (\log \lambda)^{2}. \label{e138}
\end{equation}
\end{lemma}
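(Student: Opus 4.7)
The lower bound $\log \mathbf{Z}_L^{N,\lambda}\ge 0$ follows from Jensen's inequality: with the normalization implicit in Definition \ref{eq:defsec3}, $\mathbf{Z}_L^{N,\lambda}$ is an exponential moment of a Wick-renormalized functional with vanishing mean under the reference $\mu_{1,L}^{\otimes N}$, so Jensen's inequality yields the claim immediately.

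For the upper bound, I would first use Lemma \ref{lem:changingMass} to replace the reference Gaussian by the mass-$m$ GFF, where $m=m(N,L,\lambda,\beta)\in(0,1)$ solves the gap equation \eqref{e91}. Tracking normalizations in the lattice regularization via the density identity \eqref{e130}, and passing to $\epsilon\to 0$, leads to
\begin{equation*}
\log\mathbf{Z}_L^{N,\lambda} \;=\; \log\mathbf{Z}_{m,L}^{N,\lambda} + \log\frac{\mathbf{Z}^{\mathrm{GFF}}_{m,L}}{\mathbf{Z}^{\mathrm{GFF}}_{1,L}} + L^2\,K(\lambda,\beta,N,m),
\end{equation*}
for an explicit finite constant $K$ encoding the quadratic compensation between the Wick renormalizations at masses $1$ and $m$. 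Diagonalizing in Fourier, the Gaussian ratio equals $\tfrac12\sum_{\xi\in\Lambda^*_L}\log\tfrac{1+|\xi|^2}{m^2+|\xi|^2}$, which is $\lesssim L^{2}|\log m|$, and by Lemma \ref{lem:gapEq} together with \eqref{eq:limitMassLB} we have $|\log m|\lesssim \beta+\lambda^{-1}$; a similar control applies to $K$, so these two contributions are $\lesssim L^{2}$ times a constant depending only on $\lambda$ and $\beta$. The remaining task is the extensive bound
$\log\mathbf{Z}_{m,L,\epsilon}^{N,\lambda}\lesssim L^{2}\,\lambda\bigl(|\log m|\vee(\log\lambda)^{2}\bigr)$,
uniform in $N,L,\epsilon$. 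For this I would apply the chessboard estimates reviewed in Appendix \ref{app:C}: by reflection positivity and translation invariance of $\mu_{m,L}^{\otimes N}$ on the torus, these reduce the exponential moment of $\int_{\Lambda_L}:\|Z_\epsilon\|^4:_m$ to a single-cell exponential moment at coupling $\lambda$, which is then controlled directly by the unit-volume Nelson bound \eqref{e108} of Lemma \ref{lem:partitionFunction}. Combining the two reductions and invoking once more $|\log m|\lesssim \beta+\lambda^{-1}$ yields the asserted upper bound, together with the improved asymptotic $C_{\mathrm{p.f.}}\lesssim \lambda(\log\lambda)^{2}$ for $\lambda\ge \lambda_{0}(\beta)$ claimed in \eqref{e138}.

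The main technical obstacle is attaining the $L$-uniform extensivity in the chessboard step. A naive H\"older/chessboard decomposition onto $L^{2}$ unit cells rescales the effective coupling per cell to $L^{2}\lambda$, which inserted into \eqref{e108} would produce a per-volume bound growing as $(\log L)^{2}$ and so break uniformity in $L$. The chessboard argument in Appendix \ref{app:C} circumvents this by coupling reflection positivity with the $\sqrt{|\Lambda|}$ (rather than $|\Lambda|$) scaling of the $L^{p}(\mu_{m,L}^{\otimes N})$ norm of the Wick-renormalized potential furnished by Lemma \ref{lem:stabAction}, thereby enabling the single-cell reduction to be performed at the unscaled coupling $\lambda$ and yielding a genuinely extensive pressure.
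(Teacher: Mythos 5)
The essential mechanism you describe---factor the partition function over unit cells, reduce by translation invariance to a single cell, and invoke Nelson's unit-volume bound \eqref{e108}---does match the paper's proof, and the \eqref{e138} refinement proceeds as you indicate. But two intermediate steps in your proposal are wrong as written. First, the lower bound: the Wick-renormalized integrand does \emph{not} have vanishing mean under $\mu_{1,L}^{\otimes N}$. The Wick ordering in \eqref{e126} uses the unit-mass counterterm $C^{1}_{\epsilon,L}$, but the reference measure there is the bare gradient measure, and the quadratic part carries a $\beta$-shift; the mean-zero property does not hold against $\mu_{1,L}^{\otimes N}$. The normalization the paper actually uses (visible in the proof of Proposition \ref{prop:relEntropyDensity}) is $\mathbf{Z}^{N,\lambda}_L=\E^{\mu_{m,L}^{\otimes N}}[\exp(-\tfrac{\lambda}{N}\int_{\Lambda_L}:\|Z\|_{\R^N}^4:_m\,dx)]$ with $m$ solving the gap equation \eqref{e91}; relative to $\mu_{m,L}^{\otimes N}$ the quartic Wick product has zero mean by construction, and Jensen gives $\mathbf{Z}^{N,\lambda}_L\ge1$. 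Second, your decomposition $\log\mathbf{Z}_L^{N,\lambda}=\log\mathbf{Z}_{m,L}^{N,\lambda}+\log(\mathbf{Z}^{\mathrm{GFF}}_{m,L}/\mathbf{Z}^{\mathrm{GFF}}_{1,L})+L^2K$ is ill-posed in the continuum limit: in two dimensions $\sum_\xi\log\tfrac{1+|\xi|^2}{m^2+|\xi|^2}$ has tail $(1-m^2)|\xi|^{-2}$ and is UV-divergent, so it is certainly not $\lesssim L^2|\log m|$ as you claim; the divergence must cancel a corresponding divergence in $K$, which you incorrectly declare finite. The paper sidesteps this entirely because Lemma \ref{lem:changingMass} identifies $\mathbf{Z}^{N,\lambda}_L$ with $\mathbf{Z}^{N,\lambda}_{m,L}$ outright, so there is no Gaussian ratio to control.

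A separate but still substantive point concerns mechanics and terminology. The tool used here is the \emph{Checkerboard} estimate (Lemma \ref{lemma:UniformCheckerboard}), a Gaussian $L^p$--H\"older inequality for $\mu_{m,L}^{\otimes N}$ proved by conditioning (building on Theorem A.1 of \cite{guerra1976boundary}), not the \emph{Chessboard} estimate (Lemma \ref{lem:Chessboard}), which is the reflection-positivity inequality for the interacting measure $\nu^N_L$ and is used instead in Proposition \ref{prop:relEntropyDensity}. Consequently your explanation of how extensivity in $L$ is achieved---``coupling reflection positivity with the $\sqrt{|\Lambda|}$ scaling of the $L^p$ norm of the Wick-renormalized potential from Lemma \ref{lem:stabAction}''---misidentifies the mechanism. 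The extensivity comes entirely from the fact that the Checkerboard exponent $p=p(m)$ is independent of $L$ and $N$, so after factoring one is left with $L^2$ identical single-cell $L^p$ norms at effective coupling $\lambda p$, bounded by Lemma \ref{lem:partitionFunction} with $|\Lambda|=1$; the $\|\varphi\|_{L^2}$ dependence in Lemma \ref{lem:stabAction} plays no role in the cell decomposition itself.
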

\begin{proof}
We decompose $\Lambda_{L}$ into a union of $L^{2}$ unit cubes, overlapping only at their boundaries, and label these cubes $C_{j}$ for $j \in I_{L}$ where $I_{L} \subset \Z^{2}$ indexes the bottom left corner of the cube. By definition of the partition function, we have
\begin{align}
\mathbf{Z}^{N,\beta,\lambda}_{L}&=\E^{\mu_{m,L}^{\otimes N} }\exp \bigg ( {-\frac{\lambda}{N} \int_{T_{L}^{2}}: \| Z\|_{\R^{N}}^{4}:dx } \bigg ) \nonumber \\
&   =\E^{\mu_{m,L}^{\otimes N} }\prod_{j \in I_{L}}\exp \bigg ( {-\frac{\lambda}{N} \int_{C_{j}}: \| Z\|_{\R^{N}}^{4}:dx } \bigg ) \nonumber.
\end{align}
By the Checkerboard estimate Lemma \ref{lemma:UniformCheckerboard}, there exists a $p=p(m)$ independent of $L$, $N$ such that 
\begin{align}
\mathbf{Z}^{N,\beta,\lambda}_{L} &\leq \prod_{j \in I_{L}}\bigg \|\exp \bigg ( {-\frac{\lambda}{N} \int_{C_{j}}: \| Z\|_{\R^{N}}^{4}:dx } \bigg )  \bigg \|_{L^{p}(d \mu_{m,L}^{\otimes N} ) } \nonumber \\
&=\bigg \|\exp \bigg ( {-\frac{\lambda}{N} \int_{C_{0}}: \| Z\|_{\R^{N}}^{4}:dx } \bigg )  \bigg \|_{L^{p}(d \mu_{m,L }^{\otimes N} ) }^{L^{2}} \nonumber,
\end{align}
where the second inequality uses the translation invariance of $\mu_{m,L}$ due to the periodic boundary conditions.  Taking the logarithm and using Lemma \ref{lem:partitionFunction} with $\Lambda=C_{0}$ we obtain \eqref{e137}.  Furthermore, for $\lambda \geq e$, by \eqref{e108} we have
\begin{equation}
\log\left\|\exp \bigg ( {-\frac{\lambda}{N} \int_{C_{0}}: \| Z\|_{\R^{N}}^{4}:dx } \bigg )  \right \|_{L^{p}(d \mu_{m,L }^{\otimes N} )} \lesssim \lambda  \bigg(|\log m| \vee (\log (\lambda p ) \big)^2 \bigg), \label{e164}
\end{equation}
which implies \eqref{e138}.  The claim now follows by taking $\lambda$ large enough depending on $m$, which is in turn upper and lower bounded only in terms of $\beta$ due to Remark \ref{rem:orderOfp} and Lemma \ref{lem:gapEq}. 
\end{proof}
\begin{remark}
We would expect that this bound actually holds for $\lambda_{0}$ independent of $\beta$, say $\lambda_{0}=e$.  Indeed, as a substitute for Checkerboard estimates, one could likely apply the conditioning approach of \cite{guerra1976boundary} to deduce a suitable factorization over cubes, modulo changing from periodic to Neumann boundary conditions. Combining this with the method of images, this would allow us to effectively take $p=1$ in \eqref{e164} and obtain an upper bound of the log of the partition function by $\lambda (\beta \vee (\log \lambda)^{2})$.   
However, we also need the Checkerboard estimates in the proof of Proposition \ref{prop:relEntropyDensity} below, and for that application we are not entirely sure if the approach of \cite{guerra1976boundary} suffices or not.  Indeed, Theorem 3.1 from \cite{guerra1976boundary} is stated for a somewhat limited class of observables compared to the Checkerboard estimate Theorem A.1 from \cite{guerra1976boundary}.   \end{remark}
We now turn to the final core quantitative ingredient, a uniform control on the relative entropy density of the measure $\nu^{N}_{L}$ with respect to $\mu^{\otimes N}_{m,L}$.  The following bound is similar to Lemma \ref{lem:TalagrandApp} in terms of the uniformity with respect to $N$, but with the improvement that it scales optimally with respect to the volume of $\Lambda_{L}$, which is crucial for obtaining uniform bounds on a suitable coupling later in Section \ref{sec:qualitative}.  To obtain the optimal scaling, we rely on a combination of the Checkerboard and Chessboard estimates.  These tools were introduced in the context of the $\Phi^{4}_{2}$ theory in the works \cite{guerra1975p} and \cite{frohlich1977pure} respectively.  Since these seminal works, the Chessboard estimate has become an indispensible tool in the study of phase transitions of lattice gases, c.f. \cite{frohlich1976infrared}.  The Checkerboard estimate however seems to have received less attention, and we recall that it provides an elegant way of capturing the approximate independence of the massive GFF on disjoint (up to possible overlap on the boundary) regions.  Specifically, after covering $\Lambda_{L}$ with adjacent rectangles, it allows us to bound the expectation of a product of observables localized on these rectangles by a product of their $L^{p}$ norms.  The important point is that $p$ is independent of the number of rectangles, and hence the volume $L^{2}$.  In the present context, we need a Checkerboard estimate which is additionally uniform in $N$, and we give a short argument that this is indeed the case in Lemma \ref{lemma:UniformCheckerboard}.
\begin{proposition} \label{prop:relEntropyDensity}
The following bound holds
 \begin{equation}
    \sup_{N,L \in \N}\frac{1}{L^{2}}\mathcal{H}(\nu^{N }_{L} \mid \mu^{\otimes N}_{m,L} ) \lesssim C_{r.e.}(\lambda,\beta). \label{e75} \nonumber 
\end{equation}
Furthermore, there exists $\lambda_{1}=\lambda_{1}(\beta)$ such that for $\lambda \geq \lambda_{1}(\beta)$ it holds
\begin{equation}
\text{log} \big (C_{r.e.}(\lambda,\beta) \big ) \lesssim  \lambda (\log \lambda)^{2}.
\end{equation}
\end{proposition}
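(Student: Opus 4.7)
The plan is to establish the bound first for the regularized measures $\nu^N_{m,L,\epsilon}$ uniformly in the ultraviolet cutoff $\epsilon$, then pass to the limit via lower semicontinuity of the relative entropy in the first argument, exactly as in the proof of Lemma \ref{lem:TalagrandApp}. Since $\nu^N_{m,L,\epsilon}$ has explicit density $(\mathbf{Z}^{N,\lambda}_{m,L,\epsilon})^{-1} e^{-\lambda F^N_{L,\epsilon}}$ with respect to $\mu^{\otimes N}_{m,L}$, with $F^N_{L,\epsilon} \eqdef \frac{1}{4N}\int_{\Lambda_L}:\|Z_\epsilon\|_{\R^N}^{4}:_m dx$, we have the identity
\[
\mathcal{H}(\nu^N_{m,L,\epsilon} \mid \mu^{\otimes N}_{m,L}) = -\lambda\, \E^{\nu^N}[F^N_{L,\epsilon}] - \log \mathbf{Z}^{N,\lambda}_{m,L,\epsilon}.
\]
Jensen's inequality gives $\log \mathbf{Z}^{N,\lambda}_{m,L,\epsilon} \geq 0$, reducing the task to bounding $\lambda|\E^{\nu^N}[F^N_{L,\epsilon}]|$. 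Partitioning $\Lambda_L$ into unit cubes $\{C_j\}_{j\in I_L}$ as in the proof of Lemma \ref{lem:partitionIV} and writing $G_j^\epsilon \eqdef \frac{1}{4N}\int_{C_j}:\|Z_\epsilon\|_{\R^N}^{4}:_m dx$, the triangle inequality combined with translation invariance of $\nu^N_{m,L,\epsilon}$ under periodic boundary conditions yields $|\E^{\nu^N}[F^N_{L,\epsilon}]| \leq L^2 \E^{\nu^N}[|G_0^\epsilon|]$, so everything reduces to bounding $\E^{\nu^N}[|G_0^\epsilon|]$ by a constant depending only on $\lambda$ and $\beta$.

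For this I would combine the two reflection-positivity based tools recalled in Appendix \ref{app:C}. Because the lattice measure $\nu^N_{m,L,\epsilon}$ is reflection positive under periodic boundary conditions (the quartic term being single-site and the discrete Laplacian producing a reflection-positive boundary coupling), the Chessboard estimate applied to the non-negative observable $|G_0^\epsilon|$ yields
\[
\E^{\nu^N}[|G_0^\epsilon|] \leq \bigg(\E^{\nu^N}\Big[\prod_{j \in I_L} |G_j^\epsilon|\Big]\bigg)^{1/L^2}.
\]
Rewriting the inner expectation against $\mu^{\otimes N}_{m,L}$ via the density and then applying the uniform-in-$N$ Checkerboard estimate (Lemma \ref{lemma:UniformCheckerboard}) with the localized factors $f_j \eqdef |G_j^\epsilon|e^{-\lambda G_j^\epsilon}$ and some $p = p(m) \in [1,\infty)$ independent of $L, N$,
\[
\E^{\nu^N}\Big[\prod_j |G_j^\epsilon|\Big] = \frac{1}{\mathbf{Z}^{N,\lambda}_{m,L,\epsilon}}\,\E^{\mu^{\otimes N}_{m,L}}\Big[\prod_j f_j\Big] \leq \frac{1}{\mathbf{Z}^{N,\lambda}_{m,L,\epsilon}}\bigl\||G_0^\epsilon|e^{-\lambda G_0^\epsilon}\bigr\|_{L^p(\mu^{\otimes N}_{m,L})}^{L^2}.
\]
Using $\mathbf{Z}^{N,\lambda}_{m,L,\epsilon} \geq 1$, taking the $L^2$-th root cleanly cancels the volume dependence, leaving the scaling-optimal inequality $\E^{\nu^N}[|G_0^\epsilon|] \leq \||G_0^\epsilon|e^{-\lambda G_0^\epsilon}\|_{L^p(\mu^{\otimes N}_{m,L})}$.

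To close, Cauchy-Schwarz separates this into $\|G_0^\epsilon\|_{L^{2p}(\mu^{\otimes N}_{m,L})}\cdot\|e^{-\lambda G_0^\epsilon}\|_{L^{2p}(\mu^{\otimes N}_{m,L})}$; the first factor is bounded uniformly in $L, N, \epsilon$ by Lemma \ref{lem:stabAction}, and the second by Lemma \ref{lem:partitionFunction} applied with $\Lambda = C_0$. Lemma \ref{lem:gapEq} ensures the gap-equation mass $m = m(N,L,\lambda,\beta)$ stays trapped in an interval $[m_*(\lambda,\beta),1]$, so all constants depend only on $\lambda$ and $\beta$, giving the first bound with $C_{r.e.}(\lambda,\beta) \lesssim \lambda\, C(\lambda,\beta)$. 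For the refinement $\log C_{r.e.}(\lambda,\beta) \lesssim \lambda(\log \lambda)^2$ at large $\lambda$, I would trace the sharp estimate \eqref{e108} through $\|e^{-\lambda G_0^\epsilon}\|_{L^{2p}} = (\mathbf{Z}^{N,2\lambda p}_{m,|C_0|,\epsilon})^{1/(2p)}$, using $|C_0| = 1$ and the uniform lower bound on $m$ from Lemma \ref{lem:gapEq}. The main technical obstacle to be handled in Appendix \ref{app:C} is the verification of reflection positivity of $\nu^N_{m,L,\epsilon}$ under periodic boundary conditions (so that Chessboard applies) together with the uniformity in $N$ of the Checkerboard constant $p$ in Lemma \ref{lemma:UniformCheckerboard}; once these are in place, the sharp volume cancellation above is automatic.
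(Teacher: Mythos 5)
Your proposal follows the paper's strategy: reduce via translation invariance and the Jensen lower bound on the partition function to a single-cube expectation, then combine the Chessboard estimate (for the interacting measure) with the Checkerboard estimate (for the free field) to get a volume-independent bound, and finally track dependence on $\lambda,\beta$ through Lemmas \ref{lem:stabAction}, \ref{lem:partitionFunction}, and \ref{lem:gapEq}. The one organizational difference is that you apply Checkerboard once to the combined factors $f_j = |G_j^\epsilon| e^{-\lambda G_j^\epsilon}$ and then split by Cauchy--Schwarz, whereas the paper first applies H\"older against $\nu^{N}_{m,L}$ to isolate the $L^2(\mu^{\otimes N}_{m,L})$-norm of the pure Wick product (the residual ratio of partition functions being controlled by Lemma \ref{lem:partitionIV}) and then applies Checkerboard to that norm. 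Your version absorbs the partition function more directly, at the price of handling the mixed factor inside Checkerboard; both give the same final scaling.

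Two technical points the paper addresses that your sketch elides. First, the single-cube Wick products $G_j^\epsilon$ (and their $\epsilon\to 0$ limits) are \emph{not} exactly $\mathcal{F}_{C_j}$-measurable, because the frequency-space cutoff $Z_\epsilon$ depends nonlocally on $Z$; the paper resolves this via the interim physical-space mollification in Lemma \ref{lemma:CheckerboardApprox}, and your argument needs the same repair before Lemma \ref{lemma:UniformCheckerboard} can be invoked. Second, you appeal to reflection positivity of the momentum-cutoff measure $\nu^N_{m,L,\epsilon}$ while citing the ``discrete Laplacian'' — these are different regularizations. Reflection positivity is straightforward at the lattice level but not with a smooth momentum cutoff; the paper establishes it for the lattice approximation and passes to the limit to obtain it for $\nu^N_{m,L}$ (Lemma \ref{lem:Chessboard}). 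Finally, the paper's inductive Chessboard argument only reflects over a sub-tiling of $2^{2J}$ cubes, giving exponent $2^{-2J}$ rather than your $1/L^2$; since $L^2/2^{2J}\leq 4$ this costs at most a universal constant and is harmless, but worth noting for precision.
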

\begin{proof}
To keep notation simple, we omit dependence of $\nu^{N}_{L}$ on $\lambda$, $\beta$, but still keep track of these parameters in the multiplicative constants that arise in the bounds.  As in the previous lemma, we introduce a cover of $\Lambda_{L}$ by $L^{2}$ unit cubes denoted $C_{j}$ and indexed by their center $j \in I_{L}$.  Start by noting that by Lemma \ref{lem:changingMass},
\begin{align}
\frac{1}{L^{2}} \mathcal{H}(\nu^{N}_{L} \mid \mu_{m,L}^{\otimes N} )=\frac{1}{L^{2}} \mathcal{H}(\nu^{N}_{m,L} \mid \mu_{m,L}^{\otimes N} )&=-\frac{\lambda}{N}\frac{1}{L^{2}} \E^{\nu^{N}_{m,L} } \int_{\Lambda_{L}}: \|\Phi\|_{\R^{N}}^{4} :dx-\frac{1}{L^{2}}\text{log}\mathbf{Z}^{N}_{L} \nonumber \\
&\leq -\frac{\lambda}{N} \E^{\nu^{N}_{m,L} } \int_{C_0}: \|\Phi\|_{\R^{N}}^{4} :dx,
\end{align}
where we used translation invariance of $\nu^{N}_{m,L}$ and $\mathbf{Z}_{L}^{N,\beta,\lambda} \geq 1$ for the second term (by Jensen's inequality, c.f. Lemma \ref{lem:partitionFunction} ). 

\medskip

Let $C_{0}$ be the unit cube with the origin as its bottom left corner.  Choose an integer $J$ such that $2^{J} \leq L \leq 2^{J+1}$ and define a tiling $(C_{j})_{j \in I_{J}}$ of $2^{2J}$ adjacent unit cubes via translations of $C_{0}$, disjoint except for possible overlaps at their boundaries, such that $\cup_{j}C_{j}=[-2^{J-1},2^{J-1}]^{2} \subset \Lambda_{L}$.  By Lemma \ref{lem:Chessboard}, the following Chessboard estimate holds:
\begin{align}
    \bigg |\E^{\nu^{N}_{m,L}}  \frac{1}{N}\int_{C_{0}} : \|\Phi\|_{\R^{N}}^{4} :dx \bigg | &\leq \bigg ( \E^{\nu^{N}_{m,L} } \underbrace{\prod_{j \in I_{J} } \frac{1}{N} \int_{C_{j}} : \|\Phi\|_{\R^{N}}^{4} :dx}_{\eqdef\, G(\Phi)} \bigg )^{2^{-2J} } \label{e140} 
    % \\
    % &\eqdef \big ( \E^{\nu^{N}_{m,L} }G(\Phi) \big )^{2^{-2J} }, \nonumber
\end{align}
%On a lattice, the statement in Biskup would yield a bound of the form
%\begin{equation}
%\E^{L,\epsilon} \big | \epsilon^{d}\sum_{x \in C_{0,\epsilon}}P_{\epsilon}(\Phi_{x}) \big | \leq \E^{L,\epsilon} \big |\prod_{j \in I_{L}} \epsilon^{d}\sum_{x \in C_{j,\epsilon}}P_{\epsilon}(\Phi_{x}) \big | \nonumber,
%\end{equation}
%then passing the limit $\epsilon \to 0$ would give the result.
where we omit dependence of $G$ on $N$ and $L$.  Applying Holder's inequality, Lemma \ref{lem:partitionIV}, and the lower bound $\mathbf{Z}_{L}^{N,\lambda,\beta} \geq 1$, we find that 
\begin{align}
 \E^{\nu^{N}_{m,L} }G(\Phi)  = \E^{\mu_{m,L}^{\otimes N} } \bigg [\frac{d \nu^{N}_{m,L} }{d \mu_{m,L}^{\otimes N}} G(Z) \bigg ] 
&\leq  \frac{(\mathbf{Z}^{N,\beta,2\lambda}_{L})^{\frac{1}{2} } }{\mathbf{Z}^{N,\beta,\lambda}_{L} } \|G(Z)\|_{L^{2}(d \mu_{m,L}^{\otimes N} ) }\nonumber \\ 
&\lesssim \text{exp} \bigg ( \frac{L^{2}}{2}C_{\text{p.f,}}(\beta,2\lambda) \bigg ) \|G(Z)\|_{L^{2}(d \mu_{m,L}^{\otimes N} )} .
\end{align}
To estimate $G(Z)$ in $L^{2}(d \mu_{m,L}^{\otimes N} )$, we use the Checkerboard estimate Lemma \ref{lemma:UniformCheckerboard} to obtain a $p=p(m)$ independent of $N,L$ such that 
\begin{align}
    \E^{\mu_{m,L}^{\otimes N} } \prod_{j \in I_{J}} \bigg | \frac{1}{N}\int_{C_{j}} : \|Z\|_{\R^{N}}^{4} :dx \bigg |^{2}  &\leq \prod_{j \in I_{J}}  \bigg \| \bigg (\frac{1}{N}\int_{C_{j}} : \|Z\|_{\R^{N}}^{4} :dx \bigg )^{2} \bigg \|_{L^{p}(d \mu_{m,L}^{\otimes N}) } \label{eq:EntCheckerboard} \\
    &= \bigg  \| \frac{1}{N}\int_{C_{0}} : \|Z\|_{\R^{N}}^{4} :dx \bigg \|_{L^{2p}(d \mu_{m,L}^{\otimes N} ) }^{2^{2J+1} }. \nonumber
\end{align}
There is a slight technical issue regarding measurability which requires a suitable approximation argument before applying Lemma \ref{lemma:UniformCheckerboard}, and this is carried out in Lemma \ref{lemma:CheckerboardApprox}.  Hence, by Lemma \ref{lem:stabAction} we find that
\begin{equation}
  \|G(Z)\|_{L^{2}(d \mu_{m,L}^{\otimes N} ) }^{2^{-2J} } \lesssim m^{-1}(2p-1)^{2} \lesssim m_{*}^{-5}, \nonumber   
\end{equation}
where we used Remark \ref{rem:orderOfp} and the lower bound $m \geq m_{*}$ from Lemma \ref{lem:gapEq}.  Therefore, combining the above and using that $L^{2}2^{2J} \leq 2$ we obtain
\begin{equation}
\sup_{N,L \in \N}\frac{1}{L^{2}}\mathcal{H}(\nu^{N }_{L} \mid \mu^{\otimes N}_{m,L} ) \lesssim \lambda  \text{exp} \bigg ( C_{\text{p.f,}}(\beta,2\lambda) \bigg )m_{*}^{-5}.
\end{equation}
By choosing $\lambda$ large enough depending on $\beta$, taking into account that $|\ln m_{*}|=O(\beta)$, we obtain the result. 
\end{proof}
\begin{remark} \label{rem:KupiainenIBP}
There should actually be significant cancellations between the two terms in the relative entropy.  We would expect to obtain improved dependence of the final estimate on $\lambda$  by exploiting this more carefully, perhaps by modifying the IBP arguments in \cite{kupiainen19801}.  In fact, related IBP arguments have recently been carried out in \cite{shen2025large}, which should allow to explicitly compute the large $N$ limit of the relative entropy density $\frac{1}{L^{2}}\mathcal{H}(\nu^{N}_{L} \mid \mu_{m}^{\otimes N})$ for fixed $L$ and find its precise dependence on $\lambda,\beta$.  It would be interesting to see if this could be made more quantitative at finite $N$, as this would allow to significantly weaken the scaling assumption between $\lambda$ and $N$ in Corollary \ref{cor:doubleScalingLimit}.  However, at present the results in \cite{shen2025large} are limited to the high temperature regime due to reliance on various a priori bounds established in \cite{shen2022large}, so we leave to future work the possibility of improving on the $O \big ( \text{exp}( C\lambda (\log \lambda)^{2} ) \big )$ dependence of the bound.
\end{remark}
With the above estimates we can now prove our main quantitative estimate.
\begin{proof}[Proof of Proposition \ref{prop:mainQuant} ]
By the monotonicity of $\|\cdot\|_{H^1_{\gamma}(\Lambda_{L})^{N}}$ with respect to $\gamma$ and the triangle inequality, 
\begin{align}
W_{H^{1}_{m_{*}}(\Lambda_{L})^{N} }(\nu^{N}_{L}, \mu_{m_{*},L}^{\otimes N} )^{2} \leq 2W_{H^{1}_{m}(\Lambda_{L})^{N} }(\nu^{N}_{L}, \mu_{m,L}^{\otimes N} )^{2}+2 W_{H^{1}_{m}(\Lambda_{L})^{N} }(\mu_{m,L}^{\otimes N}, \mu_{m_*,L}^{\otimes N} )^{2} \label{e159}. 
\end{align}
Applying Talagrand's inequality,  Proposition \ref{prop:FreeFieldTalagrand}, followed by Proposition \ref{prop:relEntropyDensity}, 
\begin{equation}
\frac{1}{L^{2}}W_{H^{1}_{m}(\Lambda_{L})^{N} }(\nu^{N}_{L}, \mu_{m,L}^{\otimes N} )^{2} \leq \frac{2}{N} \cdot \frac{1}{L^{2}} \mathcal{H}(\nu^{N }_{L} \mid \mu_{m,L}^{\otimes N}  ) \leq \frac{2}{N} \cdot C_{r.e}(\beta,\lambda) \label{e156}.
\end{equation}
Hence, dividing \eqref{e159} by $L^{2}$ and taking $L \to \infty$, the combination of \eqref{e156} with Lemma \ref{lem:limitL} completes the proof.
\end{proof}
\subsection{Proof of the Main Theorem}\label{sec:qualitative}
We now turn to the qualitative aspects of our proof.  \begin{lemma}\label{lem:tranlation}
    There exists an optimal coupling between $\nu^N_L$ and $\mu^{\otimes N}_{m_{*},L}$ in $W_{H^{1}_{m_{*}}(\Lambda_{L} )^{N}}(\cdot, \cdot)$ which is translation invariant.
\end{lemma}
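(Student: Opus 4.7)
The plan is a standard symmetrization argument. I would first produce \emph{some} optimal coupling $\Pi^{*}$ of $\nu^{N}_{L}$ and $\mu_{m_{*},L}^{\otimes N}$ for the cost $W_{H^{1}_{m_{*}}(\Lambda_{L})^{N}}$, then average it against the diagonal action of the translation group on the product space, obtaining a translation-invariant coupling with the same cost.

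Existence of $\Pi^{*}$ goes by the direct method of the calculus of variations. Finiteness of the Wasserstein cost $W_{H^{1}_{m_{*}}(\Lambda_{L})^{N}}(\nu^{N}_{L},\mu_{m_{*},L}^{\otimes N})$ follows from \cref{prop:mainQuant}, so the set of admissible couplings with finite cost is nonempty. Any minimizing sequence $(\Pi_{n})$ has marginals $\nu^{N}_{L}$ and $\mu_{m_{*},L}^{\otimes N}$, both tight on the Polish space $\mathcal{D}'(\Lambda_{L})^{N}$, hence $(\Pi_{n})$ is tight on the product. Prokhorov's theorem yields a weak accumulation point $\Pi^{*}$. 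Lower semicontinuity under weak convergence of the quadratic cost --- which on $\mathcal{D}'(\Lambda_{L})^{N}\times\mathcal{D}'(\Lambda_{L})^{N}$ is the supremum of the continuous quadratic functionals obtained by truncating $\|\cdot\|_{H^{1}_{m_{*}}(\Lambda_{L})}$ in Fourier frequency --- ensures $\Pi^{*}$ attains the infimum.

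Next, letting $\tau_{y}:\mathcal{D}'(\Lambda_{L})^{N}\to\mathcal{D}'(\Lambda_{L})^{N}$ denote translation by $y\in\Lambda_{L}$, I would define
\begin{equation}
\bar{\Pi} \eqdef \frac{1}{L^{2}}\int_{\Lambda_{L}}(\tau_{y}\otimes\tau_{y})_{\#}\Pi^{*}\, dy,
\end{equation}
which makes sense as a Borel probability measure on the product space since $y\mapsto(\tau_{y}\otimes\tau_{y})_{\#}\Pi^{*}$ is weakly continuous. Because $\nu^{N}_{L}$ and $\mu_{m_{*},L}^{\otimes N}$ are both translation invariant (for $\nu^{N}_{L}$ by the periodic boundary conditions used in its construction, for $\mu_{m_{*},L}^{\otimes N}$ directly from \cref{def:freeField}), the marginals of $\bar{\Pi}$ coincide with those of $\Pi^{*}$. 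Since the norm $\|\cdot\|_{H^{1}_{m_{*}}(\Lambda_{L})}$ commutes with every $\tau_{y}$, Fubini's theorem gives
\begin{equation}
\int\tfrac{1}{N}\sum_{j=1}^{N}\|X_{1,j}-X_{2,j}\|^{2}_{H^{1}_{m_{*}}(\Lambda_{L})}\, d\bar{\Pi}=\int\tfrac{1}{N}\sum_{j=1}^{N}\|X_{1,j}-X_{2,j}\|^{2}_{H^{1}_{m_{*}}(\Lambda_{L})}\, d\Pi^{*},
\end{equation}
so $\bar{\Pi}$ is again optimal. Translation invariance under the diagonal action is immediate from the change of variables $y\mapsto y+z$ in the defining integral. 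The only genuinely technical point is the lower semicontinuity step used to obtain $\Pi^{*}$; the symmetrization itself is routine once the averaging is legitimized.
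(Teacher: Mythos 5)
Your proof takes essentially the same approach as the paper's: take an optimal coupling and symmetrize it by averaging over the translation group, then verify that the averaged plan still has the correct marginals (by translation invariance of both measures) and the same cost (by translation invariance of the $H^1_{m_*}(\Lambda_L)$ norm). You fill in the existence of the optimal coupling $\Pi^*$ via the direct method, a step the paper's proof leaves implicit by simply invoking "the optimal coupling"; the rest of the argument is identical. One very minor quibble: citing \cref{prop:mainQuant} for finiteness of the cost at fixed $L$ is not quite the right reference, since that proposition only controls the $\limsup_{L\to\infty}$ of the volume-normalized cost; at fixed $L$ finiteness follows more directly from \cref{prop:FreeFieldTalagrand} together with \cref{prop:relEntropyDensity} (and the entropy formula in \cref{lem:limitL}). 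In any case existence of an optimal plan for a lower semicontinuous cost holds regardless of finiteness, so the argument is unaffected.
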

\begin{proof}
We start by noticing that both measures $\nu^N_L$ and $\mu^{\otimes N}_{m_{*},L}$ are translation invariant. More concretely, for any $y\in \Lambda_L$, we consider the mapping $T_y:\mathcal{D}'(\Lambda_L)^{N}\to\mathcal{D}'(\Lambda_L)^{N}$, defined by $T_y(\Phi)=\Phi(\cdot+y)$, then $T_y^{\otimes N}\#\nu^N_L=\nu^N_L$ and $T_y^{\otimes N}\#\mu_{L}^{\otimes N}=\mu_{L}^{\otimes N}$.
We consider $\Pi_*$ the law of the optimal coupling between $\nu^N_L$ and $\mu^{\otimes N}_{m_{*},L}$. By the observation above, we have that for any $y\in\T^2_L$, the measure $(T_y^{\otimes N}\otimes T_y^{\otimes N})\#\Pi_*$ is also an admissible coupling. It is moreover, optimal because the $H^1(\Lambda_{L})^{N}$ distance is independent of translations due to periodicity. We then define a coupling, which is also optimal, given by the average
    $$
        \Pi^{**}= \frac{1}{L^{2}}\int_{\Lambda_{L}} (T_y^{\otimes N}\otimes T_y^{\otimes N})\#\Pi_*\;dy.
    $$
By construction this coupling is translation invariant, which completes the proof. 
\end{proof}

\begin{proof}[Proof of \cref{mainthm:iv}]
By \cref{prop:mainQuant}, there exists $L_0$ independent of $N$ such that for $L \geq L_{0}$ we have
\begin{equation}
\frac{1}{L^{2}}W_{H^{1}_{m_{*}}(\Lambda_{L} )^{N} } \big ( \nu^{N}_{L}, \mu_{m_*,L} ^{\otimes N} \big )^{2} \lesssim \frac{C}{N}.  
\end{equation}
By Lemma~\ref{lem:tranlation}, we can find some probability space $(\Omega, \mathcal{F},\P)$ depending on $L$ and random variables $\Phi_{L}^{N} \sim \nu^{N}_{L}$  and $Z_{L}^{N} \sim \mu_{L}^{\otimes N}$ such that $Y_{L}^{N}(x) \eqdef\Phi_{L}^{N}(x)-Z_{L}^{N}(x)$ is stationary and satisfies
\begin{align}
&\frac{1}{N L^{2}}\E\int_{\Lambda_{L} } \big ( m_{*}^{2}\|Y^{N}_{L}(x)\|_{\R^{N}}^{2}+\|\nabla Y^{N}_{L}(x)\|_{\R^{N}}^{2} \big )dx =W_{H^{1}_{m_{*}}(\Lambda_{L} )^{N} } \big ( \nu^{N}_{L}, \mu_{m_*,L}^{\otimes N} \big )^{2} \lesssim \frac{C}{N}  \label{e74}.
\end{align}
By translation invariance this yields
\begin{equation}
  \sup_{N,L,k} \E \int_{C_k}\big ( m_*^{2}\|Y^{N}_{L}(x)\|_{\R^{N}}^{2}+\|\nabla Y^{N}_{L}(x)\|_{\R^{N}}^{2} \big )  \lesssim C \nonumber,
\end{equation}
where $C_k=k+(0,1]^2$. Finally, recalling the definition of $\rho$ from the notation section,
\begin{align}
\E \|Y^{N}_{L}\|_{H^{1}_{m_{*}}(\rho)^{N} }^{2}&=\sum_{k \in \Z^{2}}  \E\int_{C_{k}}\rho(x) \big (m_{*}^{2}|Y^{N}_{L}(x)|^{2}+|\nabla Y_{L}^{N}(x)|^{2} \big )dx \nonumber \\
&\leq \sum_{k \in \Z^{2}}\rho(k)\E \|Y^{N}_{L}\|_{H^{1}_{m_{*}}(C_{k})}^{2} \lesssim\frac{C(\lambda,\beta)}{N}.
\end{align}
This implies the uniform in $L$ bound
\begin{equation}
W_{H^{1}_{m_{*}}(\rho )^{N} } \big ( \nu^{N}_{L}, \mu_{m_{*},L} ^{\otimes N} \big )^{2} \le \frac{C(\lambda,\beta)}{N}\nonumber.  
\end{equation}
As $L \to \infty$, $\mu_{m_*,L}^{\otimes N}$ converges weakly as a probability measure in $\mathcal{P}(\mathcal{S}'(\R^{2}))$ to $\mu_{m_*}^{\otimes N}$. To see this, note that for any compactly supported test function $\phi\in C^\infty_c(\R^2)$, one can easily show the convergence of the co-variance operator
$$
\E_{\mu_{m_*,L}}\left [|\Phi.\phi|^2\right]=\frac{1}{L^2}\sum_{\xi\in \Lambda_{L}^{*} }\frac{|\widehat{\phi}(\xi)|^2}{m_*^{2}+|\xi|^2}\to \int\int \frac{|\mathcal{F}\phi(\zeta)|^2}{m_*^{2}+|\zeta|^2}\;d\zeta=\E_{\mu_{m_*}}\left [|\Phi.\phi|^2\right],
$$
where $\widehat{\phi}$ and $\mathcal{F}\phi$ denote the Fourier series and the unitary Fourier transform of $\phi$, respectively. Given a periodic state $\nu^{N}$ for the Linear Sigma Model, that is to say it is any accumulation point in $\mathcal{P}(\mathcal{D}'(\R^{2}))$ with the narrow topology of the sequence $\nu^{N}_{L}$; without relabeling, we will assume that $\nu^{N }_{L}\rightharpoonup \nu^{N}$. By the previous observations, we get that the optimal pairing $\Pi_L^{\infty}\in \mathcal{P}(\mathcal{D}'(\Lambda_{L} )^{N} \times \mathcal{D}'(\Lambda_{L} )^{N} )$ between $\nu_L^{N }$ and $\mu^{\otimes N}_{m_{*},L}$ converges to $\Pi_\infty\in \mathcal{P}(\mathcal{D}'(\R^{2})^{N} \times \mathcal{D}'(\R^{2})^{N})$, a pairing between $\nu^{N}$ and $\mu^{\otimes N}_{m_*}$. Lower-semicontinuity follows, and we get the bound
$$
W_{H^{1}_{m_*}(\rho )^{N} } \big ( \nu^{N}, \mu_{m_*} ^{\otimes N} \big )^{2}\le \liminf_{L\to\infty} W_{H^{1}_{m_*}(\rho )^{N} } \big ( \nu^{N}_{L}, \mu_{m_*,L} ^{\otimes N} \big )^{2}\le \frac{C(\lambda,\beta)}{N}.
$$
\end{proof}
Finally, we are ready to analyze the implications of Theorem \ref{mainthm:iv} for observables to establish Theorem \ref{ref:MainThm}.  We will consider cylindrical functionals $F: \mathcal{D}'(\R^{2}) \mapsto \R$ of the form
\begin{equation}
\Psi \in \mathcal{D}'(\R^{2}) \mapsto F(\Psi)=G(\Psi.g_{1},\cdots,\Psi.g_{m} )
\end{equation}
for some $m \geq 1$, smooth function $G:\R^{m} \to \R$ and $g_{1},\cdots, g_{m} \in C^{\infty}_{c}(\R^{2})$.  
\begin{definition}\label{def:admissible}
We will say that a cylindrical functional $F$ is \textit{admissible} provided at least one of the following holds:
\begin{enumerate}
\item All first derivatives of $G$ are bounded.
\item All second derivatives of $G$ are bounded and $\E^{\mu_{m_{*}}}|\nabla G(Z)|^{2}<\infty$ for $Z \sim \mu_{m_{*}}$.
\end{enumerate}    
\end{definition}
We now show that the estimate \eqref{eq:MTEstimate} holds provided that $F$ is admissible.
\begin{proof}[Proof of Theorem \ref{ref:MainThm} ]
By Theorem \ref{mainthm:iv} and Remark \ref{rem:symmetryUnbroken},
for any $i \in [N]$ we can find a probability space $(\Omega, \mathcal{F},\P)$ and random variables $\Phi_{i} \sim \nu^{N}_{i}$, $Z_{i} \sim \mu_{m_{*}}$ such that $\Phi_{i}=Y_{i}+Z_{i}$ and $\big ( \E \|Y_{i}\|_{H^{1}_{m_{*}}(\rho)} ^{2} \big )^{\frac{1}{2} }\leq C N^{-\frac{1}{2}}$.  Letting $F$ be an admissible cylindrical function and assuming the first condition in Definition \ref{def:admissible} holds, we may apply the fundamental theorem of calculus to obtain 
\begin{align}
&\bigg |\int_{\mathcal{D}'(\R^{2})}F(\Psi)d( \nu^{N}_{i}-\mu_{m_{*}} )(\Psi) \bigg | \nonumber \\
 &=\bigg |\sum_{k=1}^{m} \int_{0}^{1} \E \big [\partial_{k}G\big  ( \Phi_{i}.g_{1} +sY_{i}  .g_{1} ,\cdots, \Phi_{i}.g_{m} +sY_{i} .g_{m}   \big )Y_{i}.g_{k} \big ]ds \bigg |   \nonumber \\
&\leq \sum_{k=1}^{m}\|\partial_{k}G\|_{L^{\infty}} \E |Y_{i}.g_{k}| \leq C N^{-\frac{1}{2}}\sum_{k=1}^{m}\|\partial_{k}G\|_{L^{\infty}} \|g_{k}\|_{H^{1}_{m}(\rho)^{*} }, 
\end{align}
which completes the proof.  In the case where the condition in Definition \ref{def:admissible} holds, the proof goes by a similar argument via second order Taylor expansion, which is left to the reader.
\end{proof}
\begin{remark} \label{rem:IndependOfL}
Recall that we motivated the statement of the main result via the definition of decay of correlations \ref{expdecay}.  As we mentioned in the introduction, the implicit constant $C_{F}$ can be taken independently of $z$.  Indeed, if we take $F$ of the form $F(\Phi)=G(\Phi_{i}.g_{1},\Phi_{i}.g_{2}(\cdot+z))$, then the reader might worry that $\|g_{2}(\cdot+z)\|_{H^{1}_{m}(\rho)^{*}}$ is not bounded uniformly in $z$.  However, following the argument in the proof of Theorem \ref{ref:MainThm}, we see this is not a problem.  Indeed, by the construction in the proof of Theorem \ref{mainthm:iv}, we can ensure that $Y_{i}$ is translation invariant, which implies $\E^{\mu_{m_{*}}^{\otimes N} } |Y_{i}.g_{2}(\cdot+z)|=\E^{\mu_{m_{*}}^{\otimes N} } |Y_{i}.g_{2}|$, so the same implicit constant suffices for arbitrary $z$.  
\end{remark}
%We finish with a discussion of the large $N$ limit of the composite observables.
%\begin{corollary}
%Let $\psi \in C^{\infty}_{c}(\R^{2})$, $\delta>0$ and $x,y \in \R^{2}$ with $x \neq y$.  Then the following limit holds 
%\begin{equation}
%\lim_{N \to \infty} \frac{1}{N} \E^{\nu^{N}} \langle \Phi*\psi_{\delta}(x) ,\Phi*\psi_{\delta}(y) \rangle_{\R^{N}} =\langle \psi_{\delta}(\cdot-x),\psi_{\delta}(\cdot-y) \rangle_{H^{-1}_{m_{*}}(\R^{2}) }.
%\end{equation}
%\end{corollary}
%\begin{proof}
%By Theorem \ref{mainthm:iv} and Remark \ref{rem:symmetryUnbroken},
%for any $N \in \N$ we can find a probability space $(\Omega^{N}, \mathcal{F}^{N},\P^{N})$ and random variables $\Phi \sim \nu^{N}$, $Z \sim \mu_{m_{*}}^{\otimes N}$ such that $\Phi^{N}=Y^{N}+Z^{N}$ and 
%\begin{equation}
%\sum_{i=1}^{N}\E^{\P^{N}}\|Y_{i}^{N}\|_{H^{1}_{m_{*}}(\rho)}^{2} \leq \frac{C^{2}}{N}.
%\end{equation}
%To prove the Corollary, we insert the above decomposition into $\langle \Phi^{N}*\psi_{\delta}(x),\Phi^{N}*\psi_{\delta}(y) \rangle_{\R^{N}}$ and note that we immediately have for any $N$
%\begin{equation}
%\frac{1}{N}\E^{\P^{N}}\langle Z^{N}*\psi_{\delta}(x),Z^{N}*\psi_{\delta}(y) \rangle_{\R^{N}}=\langle \psi_{\delta}(\cdot-x),\psi_{\delta}(\cdot-y) \rangle_{H^{-1}_{m_{*}}(\R^{2}) },
%\end{equation}
%so it remains only to bound the remainders.
%\end{proof}

\newpage

\appendix

\section{Some elementary integral estimates}
We record here the proof of the elementary lemma used to track the dependence of the partition function on the coupling constant.
\begin{lemma}\label{lem:elementraryI}
For $\lambda \geq e$ it holds
\begin{equation}
\int_{0}^{\infty}\text{exp}\big ( \lambda M-\text{exp}( M^{\frac{1}{2}}) \big )dM \leq 50\lambda \text{exp}(16 \lambda(\log \lambda)^{2} ) \label{e136}.
\end{equation}
\end{lemma}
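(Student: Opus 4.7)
My plan is to split the integral at the threshold $M_{0} := 16(\log\lambda)^{2}$, chosen so that $\lambda M_{0} = 16\lambda(\log\lambda)^{2}$ exactly matches the exponent in the target bound and so that $\exp(M_{0}^{1/2}) = \lambda^{4}$ is large enough to control the tail.

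On $[0, M_{0}]$ I will simply drop the negative term $-\exp(M^{1/2})$ and use the monotone bound $\lambda M \leq \lambda M_{0}$ to obtain
\[
\int_{0}^{M_{0}} \exp\bigl(\lambda M - \exp(M^{1/2})\bigr)\,dM \;\leq\; M_{0}\exp(16\lambda(\log\lambda)^{2}) \;=\; 16(\log\lambda)^{2}\exp(16\lambda(\log\lambda)^{2}).
\]
Since $(\log\lambda)^{2}\leq \lambda$ for $\lambda \geq e$, this contributes at most $16\lambda\exp(16\lambda(\log\lambda)^{2})$.

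For the tail $M \geq M_{0}$, I perform the change of variables $u := M^{1/2} - 4\log\lambda$, giving $dM = 2(u + 4\log\lambda)\,du$ and
\[
\lambda M - \exp(M^{1/2}) \;=\; 16\lambda(\log\lambda)^{2} + \lambda u^{2} + 8\lambda u\log\lambda - \lambda^{4}e^{u}.
\]
Combining the Taylor lower bound $e^{u}\geq 1 + u + u^{2}/2 + u^{3}/6$ with the elementary inequalities $\lambda^{3}\geq 8\log\lambda$ and $\lambda^{3}\geq 2$ (both valid for $\lambda \geq e$), the correction terms $\lambda u^{2}$ and $8\lambda u\log\lambda$ are absorbed into $\lambda^{4}(u + u^{2}/2)$, yielding the pointwise estimate
\[
\lambda u^{2} + 8\lambda u\log\lambda - \lambda^{4}e^{u} \;\leq\; -\lambda^{4} - \tfrac{\lambda^{4}}{6}u^{3}.
\]
The tail integral is therefore bounded by $\exp(16\lambda(\log\lambda)^{2})\cdot e^{-\lambda^{4}}\int_{0}^{\infty}2(u + 4\log\lambda)\exp(-\lambda^{4}u^{3}/6)\,du$. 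A scaling $u \mapsto \lambda^{-4/3}u$ shows the remaining integral is $O(\log\lambda / \lambda^{4/3})$, so the whole tail contribution is $O(e^{-\lambda^{4}}\log\lambda \cdot \exp(16\lambda(\log\lambda)^{2}))$, which is dwarfed by $\lambda \exp(16\lambda(\log\lambda)^{2})$ for $\lambda \geq e$.

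Adding the two contributions yields an upper bound of the form $(16\lambda + C)\exp(16\lambda(\log\lambda)^{2})$ for a constant $C$ that is in fact $o(1)$ as $\lambda\to\infty$. The only obstacle is the numerical verification that $16\lambda + C \leq 50\lambda$ uniformly for $\lambda \geq e$; the tightest case is the endpoint $\lambda = e$, where the tail contribution is already vanishingly small thanks to the prefactor $e^{-\lambda^{4}}\leq e^{-e^{4}}$, and the inequality is confirmed by a routine arithmetic check.
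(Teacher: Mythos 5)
Your proof is correct, and it follows a genuinely different route from the paper's. You split the integral once, at $M_0 = 16(\log\lambda)^2$, and handle the tail via the substitution $u = M^{1/2} - 4\log\lambda$: this re-centers the integral so the dominant term $\exp(16\lambda(\log\lambda)^2)$ factors out, after which the Taylor lower bound $e^u \geq 1 + u + u^2/2 + u^3/6$ absorbs the polynomial corrections $\lambda u^2 + 8\lambda u\log\lambda$ into the $\lambda^4 e^u$ term once $\lambda \geq e$ (so that $\lambda^3 \geq 2$ and $\lambda^3 \geq 8\log\lambda$). The paper instead makes a three-way split at $M=1$ and $M = 48\lambda$: the far tail $[48\lambda,\infty)$ is handled via $\exp(M^{1/2}) \geq M^2/24$ forcing $f(M) \eqdef \lambda M - \exp(M^{1/2}) \leq -M/4$, and the middle interval $[1,48\lambda]$ is handled by showing $f$ is concave with a unique interior maximum $M_*$, then bounding $M_*$ via the critical-point equation. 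Your approach is tighter (it produces roughly $16\lambda$ in front of the exponential, whereas the paper ends up near $48\lambda$), requires no concavity analysis or localization of the maximizer, and turns the entire tail into a manifestly convergent Gamma-type integral damped by $e^{-\lambda^4}$. The only thing the paper's method buys is that one never needs the change of variables, at the price of a slightly more delicate argument about where the maximum of $f$ sits. Both proofs rely on the same two standard ingredients -- the Taylor lower bound for the exponential and the inequality $(\log\lambda)^2 \leq \lambda$ -- just deployed at different scales.
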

\begin{proof}
First, note that the contribution from $M \in [0,1]$ can be bounded by $\text{exp}(\lambda) \leq \text{exp}(16 \lambda(\log \lambda)^{2})$, so it suffices to consider the integral over $[1,\infty)$.  Next observe that by Taylor expansion, $\lambda M-\frac{1}{2}\text{exp}( M^{\frac{1}{2}}) \leq \lambda M-\frac{1}{2 \cdot 4!}M^{2}\leq 0$ for $M \geq 48 \lambda$.  Noting in addition that $\frac{1}{2}\text{exp}(M^{\frac{1}{2}}) \geq \frac{1}{4}M$, we find
\begin{equation}
\int_{48\lambda}^{\infty}\text{exp}\big ( \lambda M-\text{exp}( M^{\frac{1}{2}}) \big )dM \leq \int_{48\lambda}^{\infty}\text{exp}(-\frac{1}{4}M)dM=4\text{exp}(-12 \lambda) \leq 2 \lambda. 
\end{equation}
Furthermore, on $[1,48 \lambda]$, the function $f(M) \eqdef \lambda M-\text{exp}(M^{\frac{1}{2}})$ has a strictly negative second derivative and a unique maximum at a critical $M_{*}$ since $f'(1)=\lambda-\frac{1}{2}e>0$ and by Taylor expansion again, $f'(48\lambda) \leq \lambda-\frac{1}{48}(48\lambda)^{\frac{3}{2}}<0$.  At the critical point $M_{*}$ it holds $\exp(\sqrt{M_{*}})=\frac{\lambda}{2}\sqrt{M_{*}}\leq \frac{\lambda}{2}\sqrt{48 \lambda}\leq \lambda^{\frac{3}{2}}$, so that $M_{*} \leq 16 (\log \lambda)^{2}$.  Hence, we find
\begin{equation}
\int_{1}^{48\lambda} \text{exp} \big ( \lambda M-\text{exp}(M^{\frac{1}{2}}) \big )dM \leq (48\lambda-1)\text{exp}(16 \lambda (\log \lambda)^{2} ), 
\end{equation}
which yields \eqref{e136} after summing the three bounds.
\end{proof}
Furthermore, in the analysis of the various gap equations arising in Section 3, we need require some elementary bounds on Riemann sums of radially monotone functions.
\begin{lemma}\label{lem:RiemannSums}
Let $f: \xi \in \R^{2} \to f(\xi) \in [0, \infty)$ be smooth, radial, decreasing in $|\xi|$, and integrable.  Then it holds that
\begin{equation}
\left(\frac{2\pi}{L} \right)^{2}\sum_{\xi \in \Lambda_{L}^{*}}f(\xi) \geq \frac{1}{4}\int_{\R^{2}}f(\xi)d\xi. \label{e145}
\end{equation}
Furthermore, the following bound holds
\begin{equation}
\bigg |\int_{\R^{2}}f(\xi)d\xi -\left(\frac{2\pi}{L} \right)^{2}\sum_{\xi \in \Lambda_{L}^{*}}f(\xi) \bigg | \leq 3 \cdot \left(\frac{2\pi}{L} \right)^{2}f(0)+4 \cdot\left(\frac{2\pi}{L} \right) \int_{0}^{\infty} f(\eta_{1},0)d \eta_{1}. \label{e147}
\end{equation}
\end{lemma}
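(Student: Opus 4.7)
Setting $h = \tfrac{2\pi}{L}$ so that $\Lambda_L^{*} = h\Z^{2}$, the plan is to exploit the radial monotonicity of $f$ through a two-sided quadrant comparison with Riemann sums. By the radial symmetry of $f$, the full Riemann sum decomposes cleanly as
\[
h^{2}\sum_{k\in\Z^{2}}f(hk)= A + 4B + 4C,
\]
where $A \eqdef h^{2}f(0)$, $B \eqdef h^{2}\sum_{k_{1}\geq 1}f(hk_{1},0)$, and $C \eqdef h^{2}\sum_{k_{1},k_{2}\geq 1}f(hk_{1},hk_{2})$, with the factors $4$ accounting for the four half-axes and four open quadrants respectively. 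Consequently, everything boils down to a one-sided comparison of the first-quadrant Riemann sum with the integral $\int_{[0,\infty)^{2}}f = \tfrac{1}{4}\int_{\R^{2}}f$.

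For the lower bound \eqref{e145} I would tile $[0,\infty)^{2}$ by the cubes $T_{k} \eqdef hk + [0,h]^{2}$ indexed by $k \in \Z_{\geq 0}^{2}$, in which the lattice point $hk$ is the corner closest to the origin. Since $\xi \in T_{k}$ has $|\xi|\geq |hk|$ (both coordinates being nonnegative), radial monotonicity yields $\int_{T_{k}}f \leq h^{2}f(hk)$; summing over $k_{1},k_{2}\geq 0$ gives
\[
A + 2B + C \;\geq\; \int_{[0,\infty)^{2}} f\, d\xi \;=\; \tfrac{1}{4}\int_{\R^{2}} f\, d\xi,
\]
and \eqref{e145} follows immediately from $A+4B+4C\geq A+2B+C$.

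For \eqref{e147} I would run the dual argument with the cubes $S_{k} \eqdef hk + [-h,0]^{2}$ for $k_{1},k_{2}\geq 1$, which again tile $[0,\infty)^{2}$ but now place the lattice point as the corner \emph{farthest} from the origin, giving $\int_{S_{k}}f \geq h^{2}f(hk)$ and upon summation $C \leq \tfrac{1}{4}\int_{\R^{2}} f$. Combining the two sandwiches with the decomposition yields
\[
-(A + 4B) \;\leq\; \int_{\R^{2}} f - (A+4B+4C) \;\leq\; 4(A+2B+C) - (A+4B+4C) = 3A+4B,
\]
so the error is bounded by $3A + 4B$ in absolute value.

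The final step is a routine one-dimensional Riemann comparison for $B$: since $r\mapsto f(r,0)$ is decreasing on $[0,\infty)$, one has $f(r,0)\geq f(hk_{1},0)$ on $[h(k_{1}-1),hk_{1}]$, whence $h\sum_{k_{1}\geq 1}f(hk_{1},0) \leq \int_{0}^{\infty}f(r,0)\,dr$ and thus $B\leq h\int_{0}^{\infty}f(\eta_{1},0)\,d\eta_{1}$. Substituting this into $3A+4B$ gives exactly the right-hand side of \eqref{e147}. The argument has no genuine obstacle; the only minor care needed is the correct bookkeeping of the origin and axis contributions so that the $A+2B+C$ and $C$ quadrant sums are not confused with the ambient sum $A+4B+4C$.
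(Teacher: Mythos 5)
Your proof is correct and matches the paper's argument in essence: both decompose the lattice sum into origin, axis, and open-quadrant contributions via radial symmetry, bound the closed first-quadrant Riemann sum above and the open first-quadrant sum below by $\tfrac14\int_{\R^2}f$, and then absorb the axis term with a one-dimensional Riemann comparison. The only cosmetic difference is that you use two distinct tilings ($T_k$ and $S_k$) where the paper derives both inequalities from a single cube $Q_\xi$ by comparing $f$ at its two extreme corners, which amounts to the same thing.
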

\begin{proof}
For $\xi\in \frac{2 \pi}{L} \Z^{2}$ we define $Q_{\xi} \eqdef \xi+[0,\frac{2\pi}{L} )^{2}$ and note that for $\xi_{1},\xi_{2} \geq 0$
\begin{equation}
 \left(\frac{2\pi}{L} \right)^{2} f\left(\xi+\frac{2\pi}{L} e \right)\leq \int_{Q_{\xi}}f(\eta)d \eta \leq \left(\frac{2\pi}{L} \right)^{2}f(\xi),\label{e151}
\end{equation}
by monotonicity, where $e=(1,1)$.  Summing up the result we find from the upper bound in \eqref{e151} that
\begin{equation}
\int_{\R^{2}}f(\eta)1_{\eta_{1},\eta_{2} \geq 0}d \eta \leq \left(\frac{2\pi}{L} \right)^{2}\sum_{\substack{\xi=(\xi_{1},\xi_{2}) \in \Lambda_{L}^{*} \\\xi_{1},\xi_{2} \geq 0 } }f(\xi) \leq \left(\frac{2\pi}{L} \right)^{2} \sum_{\xi \in \Lambda_{L}}f(\xi), \label{e148}    
\end{equation}
since $f$ is non-negative.  The estimate \eqref{e145} now follows directly from \begin{equation}
\int_{\R^{2}}f(\eta )d\eta=4\int_{\R^{2}}f(\eta)1_{\eta_{1},\eta_{2} \geq 0}d \eta. \label{e149} 
\end{equation}
To prove \eqref{e147}, first notice the upper bound
\begin{equation}
\int_{\R}f(\eta)d\eta-\left(\frac{2\pi}{L} \right)^{2}\sum_{\xi \in \Lambda_{L}^{*}}f(\xi) \leq 3 \cdot \left(\frac{2\pi}{L} \right)^{2}f(0)+4 \cdot \left (  \frac{2\pi}{L} \right )^{2}  \sum_{\xi_{1} \in \frac{2\pi}{L}\Z, \xi_{1}>0} f(\xi_{1},0),
\end{equation}
which follows from applying \eqref{e149}, the first inequality in \eqref{e148}, and the radial symmetry of $f$.  Next we notice the lower bound
\begin{equation}
\int_{\R}f(\eta)d\eta-\left(\frac{2\pi}{L} \right)^{2}\sum_{\xi \in \Lambda_{L}^{*}}f(\xi) \geq - \left(\frac{2\pi}{L} \right)^{2}f(0)-4 \cdot \left(\frac{2\pi}{L} \right)^{2} \sum_{\xi_{1} \in \frac{2\pi}{L}\Z, \xi_{1}>0} f(\xi_{1},0), \label{e150}
\end{equation}
which follows from a similar argument.  Namely, summing over $\xi$ in \eqref{e151} we obtain
\begin{equation}
\int_{\R}1_{\eta_{1},\eta_{2} \geq 0}f(\eta)d \eta \geq \left(\frac{2\pi}{L} \right)^{2} \sum_{\substack{ \xi \in \Lambda_{L}^{*} \\ \xi_{1},\xi_{2} \geq 0} }f\left(\xi+\frac{2\pi}{L}e\right)=\left(\frac{2\pi}{L} \right)^{2}\sum_{\substack{ \xi \in \Lambda_{L}^{*} \\ \xi_{1},\xi_{2}>0} }f(\xi),
\end{equation}
which can be combined with \eqref{e149} and radial symmetry to obtain \eqref{e150}.  The above upper and lower bounds therefore yield
\begin{equation}
\bigg |\int_{\R^{2}}f(\xi)d\xi -\left(\frac{2\pi}{L} \right)^{2}\sum_{\xi \in \Lambda_{L}^{*}}f(\xi) \bigg | \leq 3 \cdot \left(\frac{2\pi}{L} \right)^{2}f(0)+4 \cdot \left( \frac{2\pi}{L} \right )^{2} \sum_{\xi_{1} \in \frac{2\pi}{L}\Z, \xi_{1}>0} f(\xi_{1},0),
\end{equation}
which implies \eqref{e147} via the bound
\begin{equation}
 \left ( \frac{2\pi}{L} \right )  \sum_{\eta_{1} \in \frac{2\pi}{L}\Z, \eta_{1}>0} f(\eta_{1},0) \leq \int_{0}^{\infty} f(\eta_{1},0)d \eta_{1},
\end{equation}
which follows again from monotonicity.
\end{proof}
We now apply the above result to deduce a suitable lower bound on $m_{\epsilon}$, the unique solution to the gap equation on $\Lambda_{L,\epsilon}$.
\begin{lemma} \label{lem:epsGapEq}
For $\epsilon \leq 1$, the unique solution $m_{\epsilon} \in (0,1)$ to \eqref{e125} is uniformly bounded from below.
\end{lemma}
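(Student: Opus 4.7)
The plan is to extract the $\xi=0$ contribution from the gap equation \eqref{e125}, which blows up like $m_\epsilon^{-2}$ as $m_\epsilon\to 0$. Since the left-hand side remains bounded independently of $\epsilon$, this will force the desired uniform lower bound. No use of $\eta$-regularity beyond the value $\eta(0)=1$ is needed.

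First I would rearrange \eqref{e125}, using the definition \eqref{e122} of $C_{\epsilon,L}^m$ and a common-denominator calculation, as
\[
\frac{m_\epsilon^2}{\lambda}+\beta \;=\; \Big(1+\tfrac{2}{N}\Big)\bigl(C_{\epsilon,L}^{m_\epsilon}-C_{\epsilon,L}^{1}\bigr) \;=\; \Big(1+\tfrac{2}{N}\Big)(1-m_\epsilon^2)\, L^{-2}\!\sum_{\xi\in\Lambda_L^*}\frac{|\eta(\epsilon\xi)|^2}{(1+|\xi|^2)(m_\epsilon^2+|\xi|^2)}.
\]
The hypothesis $m_\epsilon\in(0,1)$ guarantees $1-m_\epsilon^2>0$, so every summand on the right is non-negative. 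Retaining only the $\xi=0$ term, which equals $1/m_\epsilon^2$ since $\eta(0)=1$, I obtain
\[
\frac{m_\epsilon^2}{\lambda}+\beta \;\geq\; \frac{(1+2/N)(1-m_\epsilon^2)}{L^{2}\, m_\epsilon^2}.
\]

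The conclusion then follows by elementary algebra. Multiplying through by $L^2 m_\epsilon^2$ gives
\[
\frac{L^2 m_\epsilon^4}{\lambda}+\beta L^2 m_\epsilon^2 \;\geq\; \Big(1+\tfrac{2}{N}\Big)(1-m_\epsilon^2).
\]
If $m_\epsilon^2\leq 1/2$, the right-hand side exceeds $1/2$, so at least one of the two terms on the left must be $\geq 1/4$, yielding either $m_\epsilon^4\geq \lambda/(4L^2)$ or (when $\beta>0$) $m_\epsilon^2\geq 1/(4\beta L^2)$. In every case $m_\epsilon$ is bounded below by a positive constant depending only on $N,L,\lambda,\beta$ and independent of $\epsilon\leq 1$, which is the required uniformity.

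The main obstacle is conceptual rather than technical: one must recognize that the single $\xi=0$ mode already produces an unbounded right-hand side as $m_\epsilon\to 0$, so no refined Riemann-sum estimate (as in Lemma~\ref{lem:RiemannSums}) is actually needed here. An alternative route would be to use Lemma~\ref{lem:RiemannSums} on the full sum to obtain a bound with better $L$-dependence, but for the downstream application in \eqref{e143} only the $\epsilon$-uniform positivity of $m_\epsilon$ matters.
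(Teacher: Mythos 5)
Your proof is correct, but it follows a genuinely different and more elementary route than the paper's. The paper applies the Riemann-sum lower bound (Lemma~\ref{lem:RiemannSums}, estimate~\eqref{e145}) to the full difference $C_{\epsilon,L}^{m_\epsilon}-C_{\epsilon,L}^{1}$, bounding the sum from below by $\tfrac14(2\pi)^{-2}$ times an integral over $B(0,\tfrac{1}{2\epsilon})$; this extracts the logarithmic term $-\tfrac{1}{8\pi}\ln m_\epsilon$ and yields the explicit bound $m_\epsilon\geq\exp\!\big(-\tfrac{N}{N+2}(\beta+\lambda^{-1})-\epsilon^2\big)$, which is uniform in $L$ (and in $N$, since $N/(N+2)\leq1$) as well as in $\epsilon\leq1$. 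You instead retain only the $\xi=0$ mode, using $\eta(0)=1$ so that the right-hand side blows up like $m_\epsilon^{-2}$; this sidesteps the integral comparison entirely and in fact does not even use the hypothesis $\epsilon\leq1$, but the resulting lower bound scales like $L^{-1}$ rather than being $L$-independent. You correctly observe that for the sole downstream use — controlling the coefficient of $m_\epsilon^2-m^2$ in \eqref{e143} within the proof of Lemma~\ref{lem:changingMass}, where $N$ and $L$ are fixed — the weaker $L$-dependent bound suffices, so your argument proves what the lemma is actually used for. The trade-off is simplicity versus the sharper uniformity that the paper's proof quietly delivers.
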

\begin{proof}
Recalling that the cutoff $\eta$ takes the value $1$ on the ball of radius $\frac{1}{2}$, we have the lower bound
\begin{align}
C_{\epsilon,L}^{m_{\epsilon}}-C_{\epsilon,L}^{1}
%&=\frac{1}{L^{2}}\sum_{\xi \in \Lambda_{L}^{*}} |\eta(\epsilon \xi)| \bigg (\frac{1}{m_{\epsilon}^{2}+|\xi|^{2} }-\frac{1}{1+|\xi|^{2} }  \bigg ) \nonumber \\
& \geq \frac{1}{L^{2}}\sum_{\xi \in \Lambda_{L}^{*}} 1_{|\epsilon\xi|\leq \frac{1}{2}} \bigg (\frac{1}{m_{\epsilon}^{2}+|\xi|^{2} }-\frac{1}{1+|\xi|^{2} }  \bigg ) \nonumber \\
& \geq \frac{1}{4} \cdot\frac{1}{ (2\pi)^{2} }\int_{B(0,\frac{1}{2\epsilon}) } \bigg ( \frac{1}{m_{\epsilon}^{2}+|\xi|^{2} }-\frac{1}{1+|\xi|^{2} } \bigg )d \xi \nonumber \\
&=-\frac{1}{8 \pi}\ln m_{\epsilon}+\frac{1}{16 \pi} \ln \bigg ( \frac{1+4\epsilon^{2}m_{\epsilon}^{2}}{1+4\epsilon^{2}} \bigg ). 
\end{align}
Note that by convexity of $x \mapsto \ln(1+x)$ we have $\ln \bigg ( \frac{1+4\epsilon^{2}m_{\epsilon}^{2}}{1+4\epsilon^{2}} \bigg )\geq -4\epsilon^{2}$, hence we find
\begin{equation}
-\beta =\frac{m_{\epsilon}^{2}}{\lambda}+(1+\frac{2}{N})(C_{\epsilon,L}^{1}-C_{\epsilon,L}^{m_{\epsilon}} ) \leq \frac{1}{\lambda}+(1+\frac{2}{N}) \big ( \frac{1}{8\pi}\ln m_{\epsilon}+\frac{1}{2\pi}\epsilon^{2} \big )
\end{equation}
Re-arranging, we find the lower bound $m_{\epsilon} \geq \text{exp} \big ( -\frac{N}{N+2 }(\beta+\lambda^{-1})-\epsilon^{2}  \big )$,
which implies the uniform lower bound for $\epsilon \leq 1$.
\end{proof}
We also need the following elementary application of the Poisson summation formula, which shows that the series on the LHS is monotonically decreasing in $m^{2}$ and is used to deduce monotonicity of the solution to the finite volume gap equation with respect to $L$.
\begin{lemma}\label{lem:PoissonSum}
For $m>0$, the following identity holds
\begin{equation}
\sum_{k \in \Z^{2}}\frac{m^{2}}{(m^{2}L^{2}+|2\pi k|^{2} )^{2} }=\int_{0}^{\infty}e^{-sL^{2}}\sum_{k \in \Z^{2}} \frac{1}{4 \pi } e^{-\frac{m^{2}}{4s}|k|^{2}}ds.\label{eq:poissonSum}
\end{equation}
\end{lemma}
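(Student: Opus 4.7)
The strategy is to convert the LHS into a single integral via the elementary formula
\[
\frac{1}{a^{2}} \;=\; \int_{0}^{\infty} t\, e^{-at}\,dt, \qquad a>0,
\]
applied pointwise in $k$ with $a = m^{2}L^{2}+|2\pi k|^{2}$, and then extract the $k$-sum using the Poisson summation formula.

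First I would write
\[
\sum_{k\in\Z^{2}}\frac{m^{2}}{(m^{2}L^{2}+|2\pi k|^{2})^{2}}
\;=\; m^{2}\sum_{k\in\Z^{2}}\int_{0}^{\infty} t\,e^{-tm^{2}L^{2}}\,e^{-t|2\pi k|^{2}}\,dt.
\]
Since the integrand is non-negative, Tonelli's theorem lets me swap the sum and integral at no cost, yielding
\[
m^{2}\int_{0}^{\infty} t\,e^{-tm^{2}L^{2}}\Bigl(\sum_{k\in\Z^{2}} e^{-t|2\pi k|^{2}}\Bigr)\,dt.
\]
Next, I apply the two-dimensional Poisson summation formula to the Gaussian $f(x)=e^{-4\pi^{2}t|x|^{2}}$. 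With the convention $\widehat{f}(\xi)=\int_{\R^{2}}f(x)e^{-2\pi i x\cdot\xi}\,dx$, one has $\widehat{f}(\xi)=\tfrac{1}{4\pi t}e^{-|\xi|^{2}/(4t)}$, and therefore
\[
\sum_{k\in\Z^{2}} e^{-t|2\pi k|^{2}} \;=\; \frac{1}{4\pi t}\sum_{k\in\Z^{2}} e^{-|k|^{2}/(4t)}.
\]
Substituting this into the integral and cancelling the factor of $t$ gives
\[
\frac{m^{2}}{4\pi}\int_{0}^{\infty} e^{-tm^{2}L^{2}}\sum_{k\in\Z^{2}} e^{-|k|^{2}/(4t)}\,dt.
\]

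Finally, I perform the change of variables $s=m^{2}t$, so that $ds=m^{2}\,dt$ and $1/(4t)=m^{2}/(4s)$. This absorbs the prefactor $m^{2}$ perfectly and produces exactly the right-hand side of \eqref{eq:poissonSum}.

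No step should be a serious obstacle: Tonelli handles the interchange because everything is positive, the Poisson identity for Gaussians is standard, and the change of variables is a bookkeeping check. The only point to verify carefully is that the constants from the Poisson formula (depending on one's Fourier convention) match, so I would double-check this by tracking the factors of $\pi$ explicitly; the remark in the statement that the RHS is manifestly monotonically decreasing in $m^{2}$ (since $k=0$ gives a constant and every other summand $e^{-m^{2}|k|^{2}/(4s)}$ is strictly decreasing) then follows immediately from the identity once established.
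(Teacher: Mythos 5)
Your proof is correct and follows essentially the same route as the paper: the integral identity $\int_0^\infty t\,e^{-at}\,dt = a^{-2}$, Tonelli to interchange sum and integral, and Poisson summation for the Gaussian. The only (cosmetic) difference is that the paper performs the substitution $m^2 s\mapsto s$ before applying Poisson summation while you apply Poisson summation first and rescale at the end; the constants and Fourier conventions work out identically either way.
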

\begin{proof}
For any $k \in \Z^{2}$, the following identity follows from IBP and the change of variables $m^{2}s \mapsto s$ 
\begin{equation}
\frac{m^{2}}{(m^{2}L^{2}+|2\pi k|^{2} )^{2} }=\int_{0}^{\infty}m^{2}s e^{-s(m^{2}L^{2}+ |2\pi k|^{2} ) }ds=\int_{0}^{\infty}e^{-sL^{2}}\frac{s}{m^{2}}e^{-\frac{4\pi^{2}s}{m^{2}}|k|^{2}}ds.
\end{equation}
%The Poisson summation formula applied to a standard Gaussian tells us that for any $t>0$ it holds
%\begin{equation}
%\sum_{k \in \Z^{2}} e^{-\pi^{2}t|k|^{2}}=\sum_{k \in \Z^{2}} \frac{1}{2 \pi t} e^{-\frac{1}{2t}|k|^{2}} \nonumber.
%\end{equation}
%Applying this with $t=\frac{4s}{m}$, we have 
By the Poisson summation formula, it holds
\begin{equation}
\sum_{k \in \Z^{2}}\frac{s}{m^{2}} e^{ -\frac{4\pi^{2}s}{m^{2}}|k|^{2}}=\sum_{k \in \Z^{2}} \frac{1}{4 \pi } e^{-\frac{m^{2}}{4s}|k|^{2}} \nonumber.
\end{equation}
Hence, interchanging summation and integration we find the result.
\end{proof}

\section{Talagrand's inequality for infinite dimensional Gaussian Measures}\label{sec:TAL}
To keep the paper self-contained, we give a proof of the variant of Talagrand's inequality needed in the present work.  The argument given here is essentially the same as the one in \cite{riedel2017transportation}, as Peter Friz kindly informed us.  We consider $\mu=\mathcal{N}(0,\Sigma)$ the mean zero Gaussian measure with covariance operator $\Sigma:L^2(\Lambda_{L} )\to L^2(\Lambda_{L} )$ a self-adjoint strictly positive compact operator in $L^2$. We consider $\{(\lambda_k,e_k)\}_{k=1}^\infty$ the eigenvalue with their associated eigenfunctions which form orthonormal basis of $L^2(\Lambda_{L})$. We consider a regularity scale $s\in(0,\infty)$ and we will assume for simplicity that $\{e_k\}\subset H^s(\Lambda_{L} )$ are smooth, and $\mu\in\mathcal{P}(H^{-s}(\Lambda_{L}))$ as a probability measure on the dual space $H^{s}(\Lambda_{L})$. The introduction of the regularity scale $s$ is to avoid issues with the Donsker-Varadhan formula \cite{donsker_varadhan_1975}, that is to say dual representation of the relative entropy functional. The main result of this section is Talagrand's inequality \cite{Talagrand1996} for Gaussians measures on $\mathcal{P}(H^{-s}(\Lambda_{L}))$.

First, we introduce some notation. We define the semi-norm (which might be infinite) associated to $\Sigma$
\begin{eqnarray*}
    \|\Phi\|_{\Sigma^{-1}}^2\eqdef \sum_{k=1}^\infty \frac{1}{\lambda_k} |\Phi.e_k|^2=\Sigma^{-1}\Phi.\Phi, 
\end{eqnarray*}
which is defined on $H^{-s}(\Lambda_{L})$. Next, we given $\mu,\,\nu\in\mathcal{P}(H^{-s}(\Lambda_{L}))$, we define the distance (possibly infinite)
\begin{equation}\label{eq:WSigma}
    W^2_{\Sigma^{-1}}(\mu,\nu)\eqdef \inf_{\pi\in\Pi(\mu,\nu)}\int_{H^{-s}\times H^{-s}} \|\Phi-\Psi\|_{\Sigma_M^{-1}}^2\;d\pi(\Phi,\Psi),
\end{equation}
where 
\begin{eqnarray*}
\Pi(\mu,\nu)=\left\{\pi\in\mathcal{P}(H^{-s}\times H^{-s})\;:\;\int_{H^{-s}\times H^{-s}}F(\Phi)\;d\pi(\Phi,\Psi)=\int_{H^{-s}}F(\Phi)\;d\mu(\Phi)\right.&&\\
\left.\qquad\qquad\;\mbox{and}\;\int_{H^{-s}\times H^{-s}}F(\Psi)\;d\pi(\Phi,\Psi)=\int_{H^{-s}}F(\Psi)\;d\nu(\Psi)\quad\forall F\in C_b(H^{-s}) \right\}&&. 
\end{eqnarray*}
are the transference plans between $\mu$ and $\nu$. Finally, we define the associated relative entropy functional 
\begin{eqnarray}
    \nonumber\mathcal{H}(\nu|\mu)&\eqdef& \sup_{F\in C_b(H^{-s})}\int_{H^{-s}} F(\Psi)\;d\nu(\Psi)-\log\left(\int_{H^{-s}} e^{F(\Phi)}\;d\mu(\Phi)\right)\\
    &=&\begin{cases}
        \ds\int_{H^{-s}} \frac{d\nu}{d\mu}\log \frac{d\nu}{d\mu}\;d\mu& \nu\ll \mu\\
        +\infty &\mbox{otherwise}.
    \end{cases}\label{eq:RE}
\end{eqnarray}
which does not depend on the operator $\Sigma$.

\begin{proposition}[Talagrand's inequality]\label{prop:TAL}
Let $\mu\in\mathcal{P}(H^{-s}(\Lambda_{L} ))$ be a Gaussian measure $\mathcal{N}(0,\Sigma)$ with covariance operator $\Sigma$ which is a strictly positive, self-adjoint, compact operator in $L^2(\Lambda_{L})$ that admits a smooth diagonalizing basis $\{e_k\}\subset H^s(\Lambda_{L} )$. Then, for any $\nu\in\mathcal{P}(H^{-s}(\Lambda_{L}))$, we have the inequality
\begin{equation}
        W^2_{\Sigma^{-1}}(\nu,\mu)\le 2\mathcal{H}(\nu|\mu).
\end{equation}    
\end{proposition}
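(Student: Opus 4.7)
The plan is to reduce to finite dimensions via the diagonalizing basis $\{e_k\}$ and then pass to the limit using weak compactness of couplings. Let $P_N : H^{-s}(\Lambda_L) \to E_N := \mathrm{span}(e_1,\ldots,e_N)$ denote the orthogonal projection (well defined because $e_k \in H^s$), and set $\mu_N := (P_N)_\# \mu$, $\nu_N := (P_N)_\# \nu$. Since $\Sigma$ is diagonalized by $\{e_k\}$, after identifying $E_N$ with $\mathbb{R}^N$ we have $\mu_N = \mathcal{N}(0, \mathrm{diag}(\lambda_1,\ldots,\lambda_N))$. The linear bijection $z \mapsto (\sqrt{\lambda_k}\, z_k)_{k=1}^N$ pushes the standard Gaussian $\gamma_N$ onto $\mu_N$, transforms the usual $W_2$ distance into the $\Sigma_N^{-1}$-weighted one, and preserves relative entropy. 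Invoking the classical finite-dimensional Talagrand inequality \cite{Talagrand1996} in these coordinates yields
\[
W^2_{\Sigma_N^{-1}}(\nu_N, \mu_N) \le 2\, \mathcal{H}(\nu_N \mid \mu_N) \le 2\, \mathcal{H}(\nu \mid \mu),
\]
where the second bound is the data-processing inequality for relative entropy, immediate from the variational formula \eqref{eq:RE} by restricting the supremum to $\sigma(\Phi.e_1,\ldots,\Phi.e_N)$-measurable test functionals.

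To upgrade this to the infinite-dimensional inequality I introduce the truncated cost $d_N(\Phi,\Psi)^2 := \sum_{k=1}^N \lambda_k^{-1}\, |(\Phi - \Psi).e_k|^2$, which is continuous on $H^{-s}(\Lambda_L) \times H^{-s}(\Lambda_L)$. The gluing lemma of optimal transport identifies $W^2_{\Sigma_N^{-1}}(\nu_N, \mu_N) = \inf_{\pi \in \Pi(\nu,\mu)} \int d_N^2 \, d\pi$, so I may fix $\pi_N^* \in \Pi(\nu,\mu)$ realizing this infimum. The family $\{\pi_N^*\}_{N \ge 1}$ is tight in $\mathcal{P}(H^{-s} \times H^{-s})$ because its marginals are the fixed measures $\nu, \mu$; Prokhorov's theorem then yields a weakly convergent subsequence $\pi_{N_j}^* \rightharpoonup \pi_\infty \in \Pi(\nu, \mu)$. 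For every fixed $M$, continuity of $d_M^2$ together with the monotonicity $d_M \le d_{N_j}$ for $N_j \ge M$ gives
\[
\int d_M^2 \, d\pi_\infty = \lim_j \int d_M^2 \, d\pi_{N_j}^* \le \liminf_j \int d_{N_j}^2 \, d\pi_{N_j}^* \le 2 \mathcal{H}(\nu \mid \mu).
\]
Sending $M \to \infty$ and invoking monotone convergence yields $\int \|\Phi - \Psi\|_{\Sigma^{-1}}^2 \, d\pi_\infty \le 2 \mathcal{H}(\nu \mid \mu)$, from which $W^2_{\Sigma^{-1}}(\nu, \mu) \le 2 \mathcal{H}(\nu \mid \mu)$ follows because $\pi_\infty$ is an admissible coupling.

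The main obstacle I anticipate is that the full cost $\|\Phi - \Psi\|_{\Sigma^{-1}}^2$ need not be weakly continuous on $H^{-s} \times H^{-s}$, so I cannot pass the weak limit $\pi_{N_j}^* \rightharpoonup \pi_\infty$ directly through the infinite sum over $k$. The device above circumvents this difficulty by first passing to the weak limit at each fixed truncation level $M$, where the truncated cost $d_M^2$ \emph{is} continuous, and only afterwards sending $M \to \infty$ by monotone convergence; the uniform bound $2\mathcal{H}(\nu \mid \mu)$ supplied by the finite-dimensional step is what makes the monotone limit finite. A completely different route would bypass finite-dimensional approximation and work on the abstract Wiener space associated to $\Sigma$, using Girsanov's theorem to represent $d\nu/d\mu$ via a Cameron--Martin drift whose squared norm equals $2 \mathcal{H}(\nu \mid \mu)$, in the spirit of \cite{feyel2002measure,lehec2013representation}.
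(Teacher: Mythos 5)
Your argument is correct and follows the same overall strategy as the paper: project onto the diagonalizing eigenbasis, invoke the classical finite-dimensional Talagrand inequality after rescaling to a standard Gaussian, and pass to the limit. The differences lie in how the limit passage is organized. The paper's Lemma~\ref{lem:limproj} proves full two-sided convergence of both $W^2_{\Sigma_M^{-1}}(\nu^M,\mu^M)$ and $\mathcal{H}(\nu^M\mid\mu^M)$ to their infinite-dimensional counterparts, which requires establishing monotonicity in $M$ \emph{and} lower semi-continuity, and introduces the auxiliary lifted measures $\tilde\nu^M, \tilde\mu^M$ whose weak convergence to $\nu,\mu$ must be checked separately. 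Your route is more economical: you use only the one-directional bounds actually needed (the data-processing inequality for entropy; a lower-semicontinuity/tightness argument for the cost), and the gluing lemma lets you realize $W^2_{\Sigma_N^{-1}}(\nu_N,\mu_N)$ as an infimum over couplings in $\Pi(\nu,\mu)$ directly, so the marginals are fixed throughout and the auxiliary $\tilde\nu^M,\tilde\mu^M$ never appear. One small imprecision: in the displayed chain, the first step should read $\int d_M^2\,d\pi_\infty \le \liminf_j \int d_M^2\,d\pi_{N_j}^*$ rather than an equality, since $d_M^2$ is continuous and non-negative but unbounded on $H^{-s}\times H^{-s}$, so weak convergence only yields the Portmanteau $\liminf$ bound. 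This is, however, exactly the direction the argument requires, so the proof goes through unchanged.
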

\begin{proof}
After introducing notation, the proof of \cref{prop:TAL} follows immediately from \cref{lem:limproj} and \cref{lem:finiteTAL} below.
\hspace{0.3cm}

\noindent \textit{Notation.} Using that $\{e_k\}_{k=1}^\infty\subset H^s(\Lambda_{L})$, we can define $P_M:H^s(\Lambda_{L})\to \R^M$ to be the encoding into the first $M$ eigenfunctions. Namely, for any $\Phi\in H^{-s}(\Lambda_{L})$, we have
$$
P_M(\Phi)=(\Phi.e_1 ,...,\Phi.e_M)\in\R^M.
$$
We consider the sequence of measures $\mu^M\eqdef P_M\#\mu=\mathcal{N}(0,\Sigma_M)\in\mathcal{P}(\R^M)$, where 
$$
\Sigma_M\eqdef \begin{pmatrix}
\lambda_1 & 0 & \cdots & 0 \\
0 & \lambda_2 & \cdots & 0 \\
\vdots & \vdots & \ddots & \vdots \\
0 & 0 & \cdots & \lambda_M
\end{pmatrix}\in \R^{M\times M}.
$$
We define the semi-norms associated to $\Sigma_M$ 
\begin{eqnarray*}
\|v\|_{\Sigma_M^{-1}}^2\eqdef \sum_{k=1}^M \frac{1}{\lambda_k} |v_k|^2=\Sigma_M^{-1}v\cdot v,
\end{eqnarray*}
which is defined on $\R^M$. We introduce the following notation for the anisotropic Wasserstein distance. Given $\mu^M,\,\nu^M\in\mathcal{P}(\R^M)$, we define the distance (possibly infinite)
\begin{equation}\label{eq:WSigma}
    W^2_{\Sigma_M^{-1}}(\mu^M,\nu^M)\eqdef \inf_{\pi\in\Pi(\mu^M,\nu^M)}\int_{\R^M\times\R^M} \|v-w\|_{\Sigma_M^{-1}}^2\;d\pi(v,w),
\end{equation}
where $\Pi(\mu^M,\nu^M)\subset\mathcal{P}(\R^M\times\R^M)$ are the transference plans between $\mu^M$ and $\nu^M$. Finally, we define the associated relative entropy functional
\begin{eqnarray}
    \nonumber\mathcal{H}(\nu^M|\mu^M)&\eqdef& \sup_{F\in C_b(\R^M) }\int_{\R^M} F(w)\;d\nu^M(w)-\log\left(\int_{\R^M} e^{F(v)}\;d\mu^M(v)\right)\\
    &=&\begin{cases}
        \ds\int_{\R^M} \frac{d\nu^M}{d\mu^M}\log \frac{d\nu^M}{d\mu^M}\;d\mu^M& \nu^M\ll \mu^M\\
        +\infty &\mbox{otherwise}.
    \end{cases}\label{eq:RE2}
\end{eqnarray}
which does not depend on the operator $\Sigma_M$.

\begin{lemma}\label{lem:limproj}
Let $\mu,\,\nu\in\mathcal{P}(H^{-s}(\Lambda_{L}))$, for any $M\in\N$ we consider $P_M\#\mu=\mu^M$, $P_M\#\nu=\nu^M\in\mathcal{P}(\R^M)$ be their associated finite dimensional projections. Then, we have the limits
\begin{equation}
    \lim_{M\to\infty} W^2_{\Sigma^{-1}_M}(\nu^M,\mu^M)=W^2_{\Sigma^{-1}}(\nu,\mu)\qquad\mbox{and}\qquad\lim_{M\to\infty} \mathcal{H}(\nu^M|\mu^M)=\mathcal{H}(\nu|\mu).
\end{equation}
\end{lemma}
\begin{proof}[Proof of \cref{lem:limproj}]
\noindent    \textit{Step 1.} We show the monotonocity of the Wasserstein distance and the entropy with respect to the projection dimension. Namely, for any $M_1, M_2\in\N$ such that $M_1\le M_2$, we have the inequalities:
\begin{equation}\label{eq:montonocity}
     W_{\Sigma^{-1}_{M_1}}^2(\nu^{M_1},\mu^{M_1}) \leq W_{\Sigma^{-1}_{M_2}}^2(\nu^{M_2},\mu^{M_2})\qquad\mbox{and}\qquad\mathcal{H}(\nu^{M_1}|\mu^{M_1}) \leq \mathcal{H}(\nu^{M_2}|\mu^{M_2}).
\end{equation}
Moreover, the inequality still holds if $M_2=\infty$.

\hspace{0.3cm}

\noindent\textit{Proof of Step 1.} For simplicity, we show the proof with the notation for $M_2<\infty$, the generalization to $M_2=\infty$ is analogous.
We start with the Wasserstein distance, we notice that for any transference plan $\pi\in \Pi(\mu^{M_2},\nu^{M_2})$ between $\mu^{M_2}$ and $\nu^{M_2}$, its projection $(P_{M_1}\times P_{M_1})_\# \pi\in \Pi(\mu^{M_2},\nu^{M_2})$  is a transference plan between $\mu^{M_1}$ and $\nu^{M_1}$. Hence, for any $\pi\in \Pi(\mu^{M_2},\nu^{M_2})$, the chain of inequalities is given by
\begin{eqnarray*}
    W_{\Sigma^{-1}_{M_1}}^2(\nu^{M_1},\mu^{M_1})&\le& \int_{\R^{M_1}\times\R^{M_1}} \|v-w\|_{\Sigma_{M_1}^{-1}}^2\;d(P_{M_1}\times P_{M_1})_\#\pi(v,w)\\
    &=& \int_{\R^{M_2}\times\R^{M_2}} \|P_{M_1}v-P_{M_1}w\|_{\Sigma_{M_2}^{-1}}^2\;d\pi(v,w)\\
    &\le& \int_{\R^{M_2}\times\R^{M_2}} \|v-w\|_{\Sigma_{M_2}^{-1}}^2\;d\pi(v,w).
\end{eqnarray*}
The inequality in \eqref{eq:montonocity} now follows by taking an infimum over $\pi\in \Pi(\mu^{M_2},\nu^{M_2})$.

For the entropy, we can use that any $F\in C_b(\R^{M_1})$ can be trivially extended to also belong to $\tilde{F}\in C_b(\R^{M_2})$. Hence, we have the inequalities
\begin{eqnarray*}
    &&\int_{\R^{M_1}} F(w)\;d\nu^{M_1}(w)-\log\left(\int_{\R^{M_1}} e^{F(v)}\;d\mu^{M_1}(v)\right)\\
    &&\qquad=\int_{\R^{M_2}} \tilde{F}(w)\;d\nu^{M_2}(w)-\log\left(\int_{\R^{M_2}} e^{\tilde{F}(v)}\;d\mu^{M_2}(v)\right)\\
    &&\qquad\le \mathcal{H}(\nu^{M_2}|\mu^{M_2}).
\end{eqnarray*}
The inequality in \eqref{eq:montonocity} now follows by taking an supremum over $F\in C_b(\R^{M_1})$.

\hspace{0.3cm}

\noindent \textit{Step 2.} We show the lower-semicontinuity 
\begin{eqnarray}\label{eq:lsc}
    \liminf_{M\to\infty} W_{\Sigma^{-1}_{M}}^2(\nu^{M},\mu^{M})\ge W_{\Sigma^{-1}}^2(\nu,\mu) \quad\mbox{and}\quad\liminf_{M\to\infty}\mathcal{H}(\nu^{M}|\mu^{M})\ge \mathcal{H}(\nu|\mu).  
\end{eqnarray}

\noindent\textit{Proof of Step 2.}  We consider the inverse mapping to the projection $P_M^{-1}:\R^M\to H^{-s}(\Lambda_{L})$ given by 
$$
P_M^{-1}v= \sum_{i=1}^Mv_i e_i\in H^{-s},
$$
and set up
$$
\tilde{\mu}^M=P^{-1}_M\mu^M\qquad\mbox{and}\qquad \tilde{\nu}^M=P^{-1}_M\nu^M.
$$
We notice that the measures $\tilde{\mu}^M$ and $\tilde{\nu}^M$ converge weakly to $\mu$ and $\nu$ as $M\to\infty$. This follows by continuity of the projection operator 
$$
\tilde{P}_M\Phi\eqdef \sum_{i=1}^M \langle \Phi,e_i\rangle e_i\to_{H^{-s}}\Phi,
$$ 
and using Lebesgue dominated convergence
$$
\int_{H^{-s}} F(\Phi)\;d\tilde{\mu}^M=\int_{H^{-s}} F(\tilde{P}_M\Phi)\;d\tilde{\mu}\to_{M\to\infty} \int_{H^{-s}} F(\Phi)\;d\tilde{\mu}.
$$
We consider $\pi_M\in \Pi(\nu^{M},\mu^{M})$ the optimal plan for $W_{\Sigma^{-1}_{M}}$, and take $\tilde{\pi}_M=(P_M^{-1}\times P_M^{-1})_\#\pi_M\in\mathcal{P}(H^s\times H^s)$.
Then, we have the identity
$$
    W_{\Sigma^{-1}_{M}}^2(\nu^{M},\mu^{M})=\int_{H^{-s}\times H^{-s}}\|\Phi-\Psi\|_{\Sigma^{-1}}^2\tilde{\pi}_M(\Phi,\Psi).
$$
As $H^{-s}$ is a polish space, Prohorov's implies that $\mu^M$ and $\nu^M$ are tight. This in turn implies that $\tilde{\pi}^M$ is tight
$$
\tilde\pi_M((K_\eps^\mu\times K_\eps^\nu)^c)\le \tilde\pi_M(\mathcal{D}'\times (K_\eps^\nu)^c)+\tilde\pi_M((K_\eps^\mu)^c\times \mathcal{D}')=\nu^M((K_\eps^\nu)^c)+\mu^M((K_\eps^\mu)^c)\le 2\eps.
$$
Hence, $\tilde{\pi}^M$ admits a convergent subsequence which has a limit $\pi^\infty\in\Pi(\mu,\nu)$. Next, we show the lower-semicontinuity. For any $N\le M$ we have
$$
W_{\Sigma^{-1}_{M}}^2(\nu^{M},\mu^{M})\ge \int_{H^{-s}\times H^{-s}}\|\tilde{P}_N\Phi-\tilde{P}_N\Psi\|_{\Sigma^{-1}}^2\tilde{\pi}^M(\Phi,\Psi),
$$
Using weak convergence $\tilde{\pi}^M\rightharpoonup\pi^\infty$ and the continuity of $\|\tilde{P}_N\Phi-\tilde{P}_N\Psi\|_{\Sigma^{-1}}^2$, we have that for any $N\in\N$
$$
\liminf_{M\to\infty}W_{\Sigma^{-1}_{M}}^2(\nu^{M},\mu^{M})\ge\int_{H^{-s}\times H^{-s}}\|\tilde{P}_N\Phi-\tilde{P}_N\Psi\|_{\Sigma^{-1}}^2 \pi^\infty(\Phi,\Psi).
$$
Taking supremum over $N$, we get the desired inequality 
$$
\liminf_{M\to\infty}W_{\Sigma^{-1}_{M}}^2(\nu^{M},\mu^{M})\ge\int_{H^{-s}\times H^{-s}}\|\Phi-\Psi\|_{\Sigma^{-1}}^2 \pi^\infty(\Phi,\Psi)\ge W_{\Sigma^{-1}}^2(\nu,\mu).
$$

For the relative entropy, we use the definition of the supremum \eqref{eq:RE}. For any $\eps>0$, there exists $F\in C_b(H^{-s})$ such that
$$
\int_{H^{-s}} F(\Psi)\;d\nu(\Psi)-\log\left(\int_{H^{-s}} e^{F(\Phi)}\;d\mu(\Phi)\right)\ge \mathcal{H}(\nu|\mu)-\eps.
$$
Using the definition \eqref{eq:RE} for the relative entropy of $\tilde\mu^M$ and $\tilde\nu^M$, we have
$$
\mathcal{H}(\nu^M|\mu^M)\ge \int_{H^{-s}} F(\tilde{P}_M\Psi)\;d\nu(\Psi)-\log\left(\int_{H^{-s}} e^{F(\tilde{P}_M\Phi)}\;d\mu(\phi)\right).
$$
Taking limits and using that $\tilde{P}_M\Psi\to\Psi$ in $H^{-s}$ we have that for any $\eps>0$
$$
\liminf_{M\to\infty} \mathcal{H}(\nu^M|\mu^M)\ge \mathcal{H}(\nu|\mu)-\eps,
$$
and the conclusion follows.

\hspace{0.3cm}

\noindent \textit{Step 3.} The convergence now follows by combining \textit{Step 1} and \textit{Step 2}.

\end{proof}
Next, we show the following known result for finite dimensional Gaussians.
\begin{lemma}\label{lem:finiteTAL}
    Let $\mu^M=\mathcal{N}(0,\Sigma_M)\in\mathcal{P}(\R^M)$ a Gaussian measure with zero mean and covariance operator $\Sigma_M\in \R^{M\times M}$, then for any $\nu^M\in\mathcal{P}(\R^M)$ we have the inequality:
    \begin{equation}
        W^2_{\Sigma^{-1}_M}(\nu^M,\mu^M)\le 2\mathcal{H}(\nu^M|\mu^M).
    \end{equation}
\end{lemma}

\begin{proof}[Proof of \cref{lem:finiteTAL}]
    Consider $\gamma\in\mathcal{P}(\R^M)$ the standard Gaussian measure. The classical Talagrand inequality \cite[Theorem 1.1]{Talagrand1996} states that for any $\rho\in\mathcal{P}(\R^M)$, we have the inequality
    \begin{equation}\label{eq:talagrandRM}
        W_I^2(\rho,\gamma)\le 2 \mathcal{H}(\rho|\gamma).
    \end{equation}
    We notice that 
    $$
        \gamma=\Sigma_M^{-1}\#\mu^M,
    $$
    and given $\nu^M\in\mathcal{P}(\R^M)$ we define
    $$
        \rho=\Sigma_M^{-1}\#\nu^M.
    $$
    The result now follows from \eqref{eq:talagrandRM} and the identities
    $$
         W_{\Sigma_M^{-1}}^2(\nu^M,\mu^M)=W_I^2(\rho,\gamma)\le 2 \mathcal{H}(\rho|\gamma)=2 \mathcal{H}(\nu^M|\mu^M).
    $$
\end{proof}
\end{proof}

\section{Stroock's formula}\label{app:A}
In this section, we provide the details for Stroock's formula \eqref{e41} of an inner product.  This formula is well known for $F=G$:
\begin{equation}\label{eq:stroock}
   \E F^2=\sum_{j=0}^\infty\frac{1}{j!}\| \E[D^j F]\|_{H^{\otimes j}}^2. 
\end{equation}
The inner product equation \eqref{e41} we use in this paper follows immediately from the polarization identity
\begin{equation}
\E (FG)=\frac{1}{2} \E(F+G)^{2}-\frac{1}{2} \big (\E F^{2}+\E G^{2} \big ).
\end{equation}
Next, for completeness we formally derive the formula \eqref{eq:stroock}. For this we consider the orthonormal basis for $L^2_\mu(\Omega)$ which is given by
$$
\left\{\frac{1}{\sqrt{\alpha!}}\underbrace{\prod_{i=1}^\infty H_{\alpha_i}(Z.e_i)}_{H_{\alpha}(Z)}\right\}_{\alpha}
$$
where 
$$\alpha\in \mathcal{A}=\{\alpha:\N\to\N\cup\{0\}\,|\quad \,\exists r\in\N \quad s.t.\quad \alpha_{i}=0 \quad \;\forall i>r\}
$$
is the set of multi-indices that vanish for all but finitely many values, $\alpha!=\prod_{i=1}^\infty \alpha_i!$, and $H_{\alpha_i}$ denotes the standard Hermite polynomial of order $\alpha_i$. Taking the Chaos decomposition for $F$ we have
$$
F=\sum_{\alpha\in\mathcal{A}}\frac{c_\alpha}{\sqrt{\alpha!}} H_{\alpha}(Z)\qquad\mbox{and}\qquad \E F^2=\sum_{\alpha\in\mathcal{A}}c^2_\alpha
$$

On the other hand, we use that the Malliavin derivative is linear to compute the expectation in the following way
$$
\E D^j F=\sum_{\alpha\in\mathcal{A}}\frac{c_\alpha}{\sqrt{\alpha!}} \E D^j H_{\alpha}(Z).
$$
Let $\alpha = (\alpha_1, \alpha_2, \dots)$ be a multi-index with length $|\alpha| = j$. We define the associated index sequence $\mathbf{k} = (k_1, \dots, k_j)$ by repeating each integer $i$ exactly $\alpha_i$ times:
$$
\mathbf{k} = (\underbrace{1, \dots, 1}_{\alpha_1}, \underbrace{2, \dots, 2}_{\alpha_2}, \dots).
$$
Computing the expectation of the Malliavin derivative of the basis element, we have:
$$
\mathbb{E}\left[D^j H_{\alpha}(Z)\right] =
\begin{cases}
    \displaystyle \sum_{\sigma \in S_j} e_{k_{\sigma(1)}} \otimes e_{k_{\sigma(2)}} \otimes \cdots \otimes e_{k_{\sigma(j)}} & \text{if } |\alpha| = j, \\[12pt]
    0 & \text{if } |\alpha| \neq j,
\end{cases}
$$
where $S_j$ is the set of permutations of $j$ elements. Taking the norm and counting the repeated indexes we have
$$
\left\|\mathbb{E}\left[D^j H_{\alpha}(Z)\right]\right\|^2_{H^{\otimes j}}=j!\alpha!
$$
% Hence, using orthogonality, we have the identity
% $$
% \left\|\mathbb{E}\left[D^j H_{\alpha}(Z)\right]\right\|_{H^{j}}^2=j! \sum_{|\alpha|=j} c_\alpha^2
% $$
Hence, using orthogonality we get
$$
\|\E D^j F\|_{H^{\otimes j}}^2=\left\|\sum_{|\alpha|=j}\frac{c_\alpha}{\sqrt{\alpha!}} \E\left[D^j H_\alpha(Z)\right]\right\|_{H^{\otimes j}}^2=j!\sum_{|\alpha|=j}c^2_\alpha
$$
Re-arranging the terms, we get back the standard Stroock's formula
$$
\E F^2=\sum_{j=0}^\infty\frac{1}{j!}\| \E[D^j F]\|_{H^{\otimes j}}^2.
$$

\section{Construction and Properties of the Linear Sigma Model on the Torus}\label{app:C}
We now argue that $(\nu^{N}_{\epsilon})_{\epsilon>0}$ is a Cauchy sequence in the total variation norm, uniformly in $N$.  More precisely, we consider $\nu^{N}_{\epsilon}$ as probability measures on $\mathcal{S}'(\Lambda_{L})$ endowed with the Borel topology and denote by $d_{\text{TV}}(\cdot, \cdot)$ the corresponding total variation metric
given by
\begin{equation}
    d_{\text{TV}}(\mu, \nu)\eqdef \sup_{A \in \mathcal{F}} \big |\mu(A)-\nu(A) \big |,
\end{equation}
where $\mathcal{F}$ is the Borel sigma algebra over $\mathcal{S}'(\Lambda_{L})$.
\begin{lemma}
For any $\delta>1$, there exists a constant $C$ depending on $\delta,m,\lambda$ that that for all $\epsilon,\kappa<1$
\begin{equation}
 \sup_{N \geq 1}d_{\text{TV}}(\nu^{N}_{\epsilon},\nu^{N}_{\kappa}) \leq C |\epsilon-\kappa|^{\delta}   
\end{equation}
\end{lemma}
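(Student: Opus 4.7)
The plan is to bound the total variation distance directly from the explicit densities of $\nu^{N}_{\epsilon}$ and $\nu^{N}_{\kappa}$ with respect to the common reference measure $\mu_{m,L}^{\otimes N}$, rather than going through Pinsker's inequality. The two essential inputs — uniform-in-$N$ moment bounds on the partition function (Lemma \ref{lem:partitionFunction}) and the stability estimate for the Wick-renormalized quartic (Lemma \ref{lem:stabAction}) — have already been established in Section~\ref{subsec:stab}, so the argument becomes essentially algebraic.

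First, writing $F^{N}_{\epsilon} \eqdef \frac{1}{4N}\int_{\Lambda_{L}} :\|Z_{\epsilon}\|_{\R^{N}}^{4}:_{m}\, dx$, the densities are $e^{-\lambda F^{N}_{\epsilon}}/\mathbf{Z}^{N,\lambda}_{m,L,\epsilon}$, and by an elementary rearrangement
\begin{equation*}
\frac{e^{-\lambda F^{N}_{\epsilon}}}{\mathbf{Z}^{N,\lambda}_{m,L,\epsilon}}-\frac{e^{-\lambda F^{N}_{\kappa}}}{\mathbf{Z}^{N,\lambda}_{m,L,\kappa}} = \frac{e^{-\lambda F^{N}_{\epsilon}}-e^{-\lambda F^{N}_{\kappa}}}{\mathbf{Z}^{N,\lambda}_{m,L,\epsilon}}+\frac{e^{-\lambda F^{N}_{\kappa}}(\mathbf{Z}^{N,\lambda}_{m,L,\kappa}-\mathbf{Z}^{N,\lambda}_{m,L,\epsilon})}{\mathbf{Z}^{N,\lambda}_{m,L,\epsilon}\mathbf{Z}^{N,\lambda}_{m,L,\kappa}}.
\end{equation*}
Integrating against $\mu_{m,L}^{\otimes N}$, using that $|\mathbf{Z}^{N,\lambda}_{m,L,\kappa}-\mathbf{Z}^{N,\lambda}_{m,L,\epsilon}|\leq \|e^{-\lambda F^{N}_{\epsilon}}-e^{-\lambda F^{N}_{\kappa}}\|_{L^{1}(\mu_{m,L}^{\otimes N})}$, and invoking the lower bound $\mathbf{Z}^{N,\lambda}_{m,L,\epsilon}\geq 1$ from Lemma~\ref{lem:partitionFunction}, I reduce everything to estimating $\|e^{-\lambda F^{N}_{\epsilon}}-e^{-\lambda F^{N}_{\kappa}}\|_{L^{1}(\mu_{m,L}^{\otimes N})}$ uniformly in $N$.

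Next, the pointwise bound $|e^{-a}-e^{-b}|\leq |a-b|(e^{-a}+e^{-b})$ combined with Cauchy--Schwarz gives
\begin{equation*}
\|e^{-\lambda F^{N}_{\epsilon}}-e^{-\lambda F^{N}_{\kappa}}\|_{L^{1}(\mu_{m,L}^{\otimes N})}\leq \lambda\, \|F^{N}_{\epsilon}-F^{N}_{\kappa}\|_{L^{2}(\mu_{m,L}^{\otimes N})}\left((\mathbf{Z}^{N,2\lambda}_{m,L,\epsilon})^{1/2}+(\mathbf{Z}^{N,2\lambda}_{m,L,\kappa})^{1/2}\right).
\end{equation*}
The exponential factor is uniformly bounded in $N$ and in the cutoffs by the upper bound of Lemma~\ref{lem:partitionFunction} applied with coupling $2\lambda$. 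The $L^{2}$ norm of $F^{N}_{\epsilon}-F^{N}_{\kappa}$ is precisely the quantity controlled by \eqref{e26} of Lemma~\ref{lem:stabAction} with $\varphi\equiv \tfrac14\mathbf{1}_{\Lambda_{L}}$ and $p=2$, producing a bound of the form $C(m,L)\,|\epsilon-\kappa|^{\delta}$ uniformly in $N$ for every $\delta<\tfrac12$. Combining these estimates yields the claim.

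The calculation is routine once the ingredients are in place, so there is no real obstacle beyond bookkeeping. The one point worth flagging is that the exponent $\delta$ achievable by this argument is inherited directly from the stochastic stability estimate \eqref{e26}, which only allows $\delta<\tfrac12$; improving beyond this threshold would require a sharper stability statement for the Wick renormalized quartic, not a refinement of the present comparison argument. (I suspect the hypothesis ``$\delta>1$'' in the statement is a typographical inversion of ``$\delta<\tfrac12$''.)
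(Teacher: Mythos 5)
Your proof is correct, and it takes a genuinely different (and in one respect sharper) route than the paper's. The paper's argument bounds the \emph{relative entropy} $\mathcal{H}(\nu^{N}_{\epsilon}\mid \nu^{N}_{\kappa})$ by writing it out as $-\lambda\E^{\nu_{\epsilon}}[F_{\epsilon}^{N}-F_{\kappa}^{N}]+\log(\mathbf{Z}_{\kappa}/\mathbf{Z}_{\epsilon})$, estimating each piece with the same ingredients you invoke (the lower bound $\mathbf{Z}\geq 1$, the pointwise exponential inequality, Cauchy--Schwarz, Lemma~\ref{lem:partitionFunction} at coupling $2\lambda$, and the stability estimate \eqref{e26}), and only at the very end converts to total variation via Pinsker's inequality. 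You instead estimate the total variation directly as half the $L^{1}(\mu_{m,L}^{\otimes N})$ norm of the density difference, using the natural decomposition
\begin{equation*}
\frac{e^{-\lambda F_{\epsilon}^{N}}}{\mathbf{Z}_{\epsilon}}-\frac{e^{-\lambda F_{\kappa}^{N}}}{\mathbf{Z}_{\kappa}}
=\frac{e^{-\lambda F_{\epsilon}^{N}}-e^{-\lambda F_{\kappa}^{N}}}{\mathbf{Z}_{\epsilon}}+\frac{e^{-\lambda F_{\kappa}^{N}}\bigl(\mathbf{Z}_{\kappa}-\mathbf{Z}_{\epsilon}\bigr)}{\mathbf{Z}_{\epsilon}\mathbf{Z}_{\kappa}},
\end{equation*}
and both pieces collapse to the same $L^{1}$ quantity since $|\mathbf{Z}_{\kappa}-\mathbf{Z}_{\epsilon}|\le\|e^{-\lambda F_{\epsilon}^{N}}-e^{-\lambda F_{\kappa}^{N}}\|_{L^{1}}$ and $\mathbf{Z}\ge1$. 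The two routes cost the same in ingredients, but yours saves the square-root loss incurred by Pinsker: the paper's argument yields $d_{\mathrm{TV}}\lesssim|\epsilon-\kappa|^{\delta/2}$ for $\delta<\tfrac12$, whereas your direct estimate gives $d_{\mathrm{TV}}\lesssim|\epsilon-\kappa|^{\delta}$ for $\delta<\tfrac12$. Your flag about the hypothesis ``$\delta>1$'' is also well taken --- it is inconsistent with what \eqref{e26} can deliver and must be a typographical error for $\delta<\tfrac12$ (or at most $\delta<1$, as written in the body of the paper's own proof). One small bookkeeping remark: the constant $C$ you produce inherits a factor of $\|\varphi\|_{L^{2}(\Lambda_{L})}=L/4$ from \eqref{e26}, so it in fact also depends on $L$; this is consistent with the paper, where $L$ is held fixed throughout the appendix and the dependence is simply suppressed.
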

\begin{proof}
We first argue that for all $\delta \in (0,1)$ there exists a constant $C'(\delta,\lambda,m)$ such that for all $\epsilon,\kappa<1$
\begin{equation}
    \sup_{N \geq 1}\big |Z_{L,\epsilon}^{N}-Z_{L,\kappa}^{N} \big | \leq C' |\epsilon-\kappa|^{\delta} \label{e66}.
\end{equation}
To prove it, note that for all real numbers $a,b$ it holds $|e^{a}-e^{b}| \leq (e^{a}+e^{b})|b-a|$, so that 
\begin{align}
    \big | Z_{L,\epsilon}^{N}-Z_{L,\kappa}^{N} \big | &\leq |\lambda| \E^{\mu^{\otimes N}} \big [ \big ( e^{-\lambda V_{\epsilon}^{N} }+e^{-\lambda V_{\kappa}^{N} } \big ) |V_{\epsilon}^{N}-V_{\kappa}^{N} |  \big ] \nonumber \\
    &\lesssim \big ( Z^{2 \lambda, N}_{\epsilon}+Z^{2 \lambda, N}_{\kappa} \big )^{\frac{1}{2} } \big \|V_{\epsilon}^{N}-V_{\kappa}^{N} \big \|_{L^{2}(\Omega^{N}) },
\end{align}
so the result follows from the upper bound for $Z_{\epsilon}^{N}$ and the stability \eqref{e26}.  

\medskip

Let $\Phi_{\epsilon}^{N} \sim \nu^{N}_{\epsilon}$, then it holds that
\begin{align}
\mathcal{H}(\nu_{\epsilon}^{N} \mid \nu_{\kappa}^{N}) = \E^{\nu^{N}_{\epsilon} } \text{log} \bigg ( \frac{d \nu^{N}_{\epsilon} }{d \nu^{N}_{\kappa}} \bigg)=-\lambda\E^{\nu_{\epsilon}} \bigg [ (V_{\epsilon}^{N}-V_{\kappa}^{N} )(\Phi_{\epsilon}^{N}) \bigg ]+\text{log} \big ( \frac{Z_{\kappa}^{N} }{Z_{\epsilon}^{N}} \big )
\end{align}
For the first term, we write 
\begin{align}
\E^{\nu_{\epsilon}} \bigg [ (V_{\epsilon}^{N}-V_{\kappa}^{N} )(\Phi_{\epsilon}^{N}) \bigg ]&=\frac{1}{Z_{\epsilon}^{\lambda, N} } \E^{\mu^{\otimes N}} \big [\text{exp}(-\lambda V_{\epsilon}^{N} ) \big (V_{\epsilon}^{N}-V_{\kappa}^{N} \big ) \big ] \nonumber \\   
& \leq \big (Z_{\epsilon}^{2\lambda,N} \big )^{\frac{1}{2}} \big ( \E |V_{\epsilon}^{N}-V_{\kappa}^{N} |^{2} \big )^{\frac{1}{2}} \leq C |\epsilon-\kappa|^{\delta} \nonumber.
\end{align}
For the second term, we use that since $Z_{\epsilon}^{N} \geq 1$, the mean-value theorem implies
\begin{equation}
    \big |\text{log}Z_{\kappa}^{N}-\text{log}Z_{\epsilon}^{N} \big | \leq \big |Z_{\kappa}^{N}-Z_{\epsilon}^{N} \big | \leq C'|\epsilon-\kappa|^{\delta} \nonumber.
\end{equation}
The result now follows from combining the two estimates with Pinsker's inequality.

\end{proof}
Our proof also relies on a suitable Chessboard estimate for the measure $\nu^{N}_{L}$, which we prove for completeness.  Estimates of this type first appeared in \cite{frohlich1977pure} for $N=1$ and a formulation (without proof) for general $N$ is given in \cite{kupiainen19801}.  We refer to Theorem 5.8 in \cite{biskup2009reflection} for an analogous estimate for the lattice approximations.

\medskip

Recalling the definition of the measure space $\Omega$ and the associated canonical process $Z$, we define for each open (or closed) subset $C \subset \R^{2}$ the sigma algebra 
\begin{equation}
\mathcal{F}_{C} \eqdef \sigma \{Z.\varphi \mid \text{supp}\varphi \subset C \},
\end{equation}
where for $\varphi=(\varphi_{i})_{i=1}^{N}$, the constraint $\text{supp}\varphi \subset C$ means that $\text{supp}\varphi_{i} \subset C$ for each $i \in [N]$.

\begin{lemma} \label{lem:Chessboard}
The Chessboard estimate \eqref{e140} holds.
\end{lemma}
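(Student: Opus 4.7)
The plan is to deduce the Chessboard estimate from reflection positivity of $\nu^{N}_{m,L}$ with respect to the hyperplanes generated by the edges of the unit cubes $(C_{j})_{j \in I_{J}}$ tiling $[-2^{J-1},2^{J-1}]^{2} \subset \Lambda_{L}$. First I would set up the reflection framework: for each hyperplane $H$ parallel to a coordinate axis and aligned with a cube edge, let $\theta_{H}$ denote the Euclidean reflection across $H$, which acts on fields by $(\theta_{H} \Phi)(x) = \Phi(\theta_{H} x)$, and induces an involution on observables. Since $\Lambda_{L}$ is a torus and $2^{J} \leq L$, these reflections are well-defined maps of $\Lambda_{L}$ to itself (interpreting points mod $L$), and split $\Lambda_{L}$ into two pieces $\Lambda_{L}^{\pm}$, with corresponding sigma algebras $\mathcal{F}_{\Lambda_{L}^{\pm}}$.

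Next I would establish reflection positivity of $\nu^{N}_{m,L}$: for any $\mathcal{F}_{\Lambda_{L}^{+}}$-measurable $F$,
\begin{equation}
\E^{\nu^{N}_{m,L}}[F \cdot (\theta_{H} F)] \geq 0. \nonumber
\end{equation}
This follows from two inputs. On one hand, the Gaussian reference measure $\mu_{m,L}^{\otimes N}$ has covariance $(-\Delta + m^{2})^{-1}$ whose Green's function admits a random-walk (or Gaussian-path) representation respecting the reflection $\theta_{H}$; this gives reflection positivity of $\mu_{m,L}^{\otimes N}$. On the other hand, the Wick-renormalized interaction $\frac{\lambda}{4N}\int :\|\Phi\|^{4}_{\R^{N}}:_{m}dx$ decomposes as a sum of contributions from $\Lambda_{L}^{+}$ and $\Lambda_{L}^{-}$ that are mapped to one another by $\theta_{H}$, so the Radon-Nikodym derivative is of the form $e^{-A} e^{-\theta_{H} A}$, which preserves reflection positivity (approximating first by the lattice cutoffs of Section \ref{subsec:stab} and passing to the continuum limit via the stability estimates of Lemma \ref{lem:stabAction} and Lemma \ref{lem:partitionFunction}).

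Once reflection positivity is in hand, I would run the standard Chessboard induction. Let $F_{j}$ denote the observable $\frac{1}{N}\int_{C_{j}}:\|\Phi\|_{\R^{N}}^{4}:dx$, which is $\mathcal{F}_{C_{j}}$-measurable, and note by translation invariance of $\nu^{N}_{m,L}$ that $\E^{\nu^{N}_{m,L}} F_{j} = \E^{\nu^{N}_{m,L}} F_{0}$ for every $j \in I_{J}$. Applying the Cauchy--Schwarz consequence of reflection positivity,
\begin{equation}
\bigl|\E[F\cdot G]\bigr| \leq \E[F\cdot \theta_{H} F]^{\frac{1}{2}} \, \E[\theta_{H} G \cdot G]^{\frac{1}{2}}, \nonumber
\end{equation}
and iterating through the $2J$ sequential reflections that generate the full orbit of $C_{0}$ under translations by $e_{1},e_{2}$ within the $2^{J} \times 2^{J}$ tiling, I obtain
\begin{equation}
\bigl|\E^{\nu^{N}_{m,L}}[F_{0}]\bigr|^{2^{2J}} \leq \E^{\nu^{N}_{m,L}}\Bigl[\prod_{j \in I_{J}} F_{j}\Bigr], \nonumber
\end{equation}
which is exactly \eqref{e140} after taking the $2^{-2J}$-th root (and noting that the product is non-negative by a further reflection-positivity argument applied with $F \equiv 1$ on half the cubes).

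The main obstacle is verifying reflection positivity of the continuum measure $\nu^{N}_{m,L}$ itself, since the Wick renormalization is delicate: one must check that the lattice approximations $\nu^{N}_{m,L,\epsilon}$ are manifestly reflection positive (which is classical for the discrete $\Phi^{4}$ measure, as the discrete Laplacian has nearest-neighbor couplings compatible with $\theta_{H}$), and then pass the inequality through the $\epsilon \to 0$ limit established in Appendix \ref{app:C}. A minor measurability issue arises because $F_{j}$ is not strictly speaking a bounded continuous observable, but this is handled by a truncation/approximation argument as in Lemma \ref{lemma:CheckerboardApprox}, combined with the uniform $L^{p}$ bounds of Lemma \ref{lem:stabAction}.
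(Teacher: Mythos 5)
Your proposal is correct and follows essentially the same route as the paper: both establish reflection positivity of $\nu^{N}_{m,L}$ by noting it for the lattice approximations and passing to the $\epsilon \to 0$ limit, then run the standard Chessboard induction via iterated Cauchy--Schwarz across hyperplanes aligned with the cube edges. The only cosmetic differences are that the paper cites Biskup's lecture notes (Corollary 5.4 and Lemma 5.5 of \cite{biskup2009reflection}) rather than a random-walk representation for the reflection positivity of the Gaussian part, and that the paper spells out the doubling scheme explicitly (reflections across $\{x_{i}=2^{\ell}\}$ for increasing $\ell$ followed by $\{x_{i}=0\}$) whereas you summarize it as iterating $2J$ sequential reflections; neither discrepancy affects the validity of your argument.
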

\begin{proof}
Given a line $\Pi$ in $\R^{2}$, we denote by $\mathcal{R}_{\Pi}$ the reflection across this line.  We have a decomposition $\T_{L}=\T_{L}^{-} \cup \Pi \cup \T_{L}^{+}$, where $\mathcal{R}_{\Pi}$ leaves $\Pi$ invariant and $\mathcal{R}_{\Pi}\T_{L}^{-}=\T_{L}^{+}$.  For an observable $F: \omega \in \mathcal{D}'(\T_{L})^{N} \mapsto \R$, we define $\mathcal{R}_{\Pi}F(\omega)=F(\mathcal{R}_{\Pi}\omega)$, where $\mathcal{R}_{\Pi}\omega. \varphi \eqdef\omega. \mathcal{R}_{\Pi}\varphi$.  The measure $\nu^{N}_{L}$ is a limit of suitable extensions of the lattice approximations $\nu^{N}_{L,\epsilon}$.  Reflection positivity of $\nu^{N}_{L,\epsilon}$ follows easily from Corollary 5.4 in \cite{biskup2009reflection}, arguing as in the proof of Lemma 5.5 of \cite{biskup2009reflection} for the contribution of the measure from the discrete Gaussian free field.  The measure $\nu^{N}_{L}$ inherits reflection positivity in the limit $\epsilon \to 0$, which means that
\begin{equation}
(F,G) \in L^{2}(\Omega,\mathcal{F}_{\T_{L}^{+}}, \nu^{N}_{L} ) \times L^{2}(\Omega, \mathcal{F}_{\T_{L}^{+}}, \nu^{N}_{L} ) \mapsto \E^{\nu^{N}_{L}} (F \mathcal{R}_{\Pi}G)
\end{equation}
is a symmetric, bi-linear form which is non-negative along the diagonal $F=G$.  In particular, it satisfies the Cauchy-Schwartz inequality, so that for all $F \in L^{2}(\Omega, \mathcal{F}_{\T_{L}^{+}}, \nu^{N}_{L} )$ it holds
\begin{equation}
    \E^{\nu^{N}_{L} } F= \E^{\nu^{N}_{L} }(F \cdot 1) \leq \E^{\nu^{N}_{L} }(F \mathcal{R}_{\Pi}F )^{\frac{1}{2}}\E^{\nu^{N}_{L} }(1 \cdot \mathcal{R}_{\Pi}1 )^{\frac{1}{2}}=\E^{\nu^{N}_{L} }(F \mathcal{R}_{\Pi}F )^{\frac{1}{2}} \label{e110}.
\end{equation}
Our plan is to repeatedly apply the above inequality with various choices of the plane $\Pi$.  Namely, define for each $j=(j_{1},j_{2}) \in \Z^{2} \cap (-\frac{L}{2},\frac{L}{2})^{2}$ the observable
\begin{equation}
F_{j,\epsilon}(\omega)\eqdef \frac{1}{N}\int_{C_{j}}: \|\omega_{\epsilon}\|_{\R^{N}}^{4}:dx \nonumber,
\end{equation}
where $C_{j}$ is the unit cube in $\R^{2}$ having $j$ as its bottom left corner, meaning the set of points $(j_{1}+t)e_{1}+(j_{2}+s)e_{2}$ for $s,t \in [0,1]$.  Each $F_{j,\epsilon}$ is a Cauchy sequence in $L^{\infty-}(\Omega; \nu^{N}_{L})$ as a consequence of \eqref{e26}, and therefore has a limit $F_{j} \in L^{\infty-}(\Omega; \nu^{N}_{L})$.  We will make use of reflections across the lines $\Pi$ of the form $\{x_{i}=a\}$ where $a$ is either zero or of the form $2^{\ell}$, where $2^{\ell+1} \leq \frac{L}{2}$.  Note that 
\begin{equation}
\mathcal{R}_{\{x_{i}=a\} }F_{j}=\delta_{i=1}F_{(a+1-j_{1})e_{1}+j_{2}e_{2}} +\delta_{i=2}F_{j_{1}e_{1}+(a+1-j_{2})e_{2}}\label{e111}.
\end{equation}
Indeed, the fact that \eqref{e111} holds with $F_{j}$ replaced by the approximation $F_{j,\epsilon}$ is a consequence of the commutativity of smooth Fourier truncation and reflection, that is $(\mathcal{R}_{\Pi}\omega)_{\epsilon}(x)=(\mathcal{R}_{\Pi}\omega_{\epsilon})(x)$, which uses the radial symmetry of the cutoff $\eta$. The plan is to repeatedly apply the Cauchy-Schwartz inequality, first along the lines $\{x_{1}=2^{\ell}\}$, $\{x_{1}=0\}$, $\{x_{2}=2^{m}\}$, and $\{x_{2}=0\}$ in that order.  To this end, we first argue by induction that for any $k$
\begin{equation}
\E F_{0} \leq \bigg ( \E \prod_{j_{1}=0}^{2^{k}}F_{j_{1}e_{1}}  \bigg )^{2^{-k}} \label{e112}.
\end{equation}
Indeed, the base case simply amounts to \eqref{e110} applied with $F=F_{0}$ along the plane $\{x_{1}=1\}$, taking into account \eqref{e111} with $a=1$.  Assuming inequality \eqref{e112} holds for some $0\le k$, we apply \eqref{e110} with $F=\prod_{j_{1}=0}^{2^{k}}F_{j_{1}e_{1}}$ along the line $\{x_{1}=2^{j+1}\}$ to obtain
\begin{align}
\E F_{0} &\leq \bigg ( \E \prod_{j_{1}=0}^{2^{k}}F_{j_{1}e_{1}} \prod_{j_{1}=0}^{2^{k}}\mathcal{R}_{\{x_{1}=2^{j+1}\}}F_{j_{1}e_{1}}  \bigg )^{2^{-(k+1)}} \nonumber \\
&=\bigg ( \E \prod_{j_{1}=0}^{2^{k}}F_{j_{1}e_{1}} \prod_{j_{1}=0}^{2^{k}}F_{(2^{j+1}+1-j_{1})e_{1}}  \bigg )^{2^{-(k+1)}}=\bigg ( \E \prod_{j_{1}=0}^{2^{k+1}}F_{j_{1}e_{1}}  \bigg )^{2^{-(k+1)}}. \nonumber  
\end{align}
This completes the proof of \eqref{e112}, and now applying \eqref{e110} with $F=\prod_{j_{1}=0}^{2^{k}}F_{j_{1}e_{1}}$ along the line $\{x_{1}=0\}$, we find
\begin{equation}
\E F_{0} \leq \bigg ( \E \prod_{j_{1}=2^{-k}}^{2^{k}}F_{j_{1}e_{1}}  \bigg )^{2^{-(k+1) }}. \nonumber
\end{equation}
Arguing inductively once more as above and reflecting along the lines $\{x_{2}=2^{j+1}\}$ we obtain
\begin{equation}
\E F_{0} \leq \bigg ( \E \prod_{j_{1}=2^{-k}}^{2^{k}} \prod_{j_{2}=0}^{2^{j}}F_{j_{1}e_{1}+j_{2}e_{2}}  \bigg )^{2^{-(k+j+1) }} \label{e114},
\end{equation}
so that finishing with one last application of \eqref{e110} along the line $\{x_{2}=0\}$, we obtain the result.
\end{proof}
Finally, we need a generalization of the Checkerboard estimate of Guerra/Rosen/Simon \cite{guerra1976boundary}, specifically Theorem A.1 from the Appendix.  Compared to \cite{guerra1976boundary}, we need to argue that the exponent $p$ can be chosen uniformly in $N$, which we do by induction and conditioning.  
\begin{lemma} \label{lemma:UniformCheckerboard}
For all $m>0$, there exists a $p=p(m)$ such that for all $L,N$ the measure $\mu_{m,L}^{\otimes N}$ has the following property.

For any rectangular array $C_{j}$ of adjacent translates of a rectangle $C_{0}$ such that $\Lambda_{L}=\cup_{j}C_{j}$, together with an array of random variables $(F_{j})_{j}$ where $F_{j}$ is $\mathcal{F}_{C_{j}}$ measurable, the following inequality holds
\begin{equation}
\bigg \| \prod_{j}F_{j} \bigg \|_{L^{1}(d \mu_{m,L}^{\otimes N} )} \leq \prod_{j} \|F_{j}\|_{L^{p}(d \mu_{m,L}^{\otimes N} )}.
\end{equation}
\end{lemma}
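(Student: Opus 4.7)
I would follow the classical Checkerboard-estimate proof due to Guerra, Rosen, and Simon (Theorem~A.1 in \cite{guerra1976boundary}), which establishes the claim for the scalar case $N=1$, and verify that every step extends to the tensor product $\mu_{m,L}^{\otimes N}$ with the \emph{same} exponent $p = p(m)$. The key structural observation is that $\mu_{m,L}^{\otimes N}$ is Gaussian with covariance $(-\Delta+m^{2})^{-1}\otimes I_{N}$, so its domain Markov decomposition, harmonic extensions via the massive Poisson kernel, and off-diagonal correlation decay are all determined solely by the single-copy massive Green's function on $\Lambda_{L}$; the component index $i\in[N]$ only adds an orthogonal direction that does not affect any of the relevant constants.

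The first step is the Markov decomposition. Setting $\Sigma=\bigcup_{j}\partial C_{j}$ and $\mathcal{G}=\mathcal{F}_{\Sigma}$, the domain Markov property (applied componentwise and then tensorized over $i\in[N]$) gives that conditionally on $\mathcal{G}$ the sigma-algebras $\mathcal{F}_{C_{j}}$ are mutually independent, and each restriction splits as $\Phi|_{C_{j}}=\Phi^{D}_{j}+H_{j}$, where $\Phi^{D}_{j}$ is a Dirichlet massive GFF on $C_{j}$ independent across both $j$ and $i\in[N]$, and $H_{j}$ is the harmonic extension of the boundary trace via the massive Poisson kernel on $C_{j}$. Integrating out the independent Dirichlet parts reduces the problem to the inequality
$$\E^{\mu_{m,L}^{\otimes N}}\Big[\prod_{j}F_{j}\Big]=\E^{H}\Big[\prod_{j}\widetilde{F}_{j}(H_{j})\Big]\leq \prod_{j}\|\widetilde{F}_{j}(H_{j})\|_{L^{p}(\mu_{m,L}^{\otimes N})},$$
where $\widetilde{F}_{j}(h)\eqdef\E^{(\mu^{D}_{m,C_{j}})^{\otimes N}}[F_{j}(\Phi^{D}+h)]$. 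Conditional Jensen then yields $\|\widetilde{F}_{j}(H_{j})\|_{L^{p}}\leq\|F_{j}\|_{L^{p}}$ for every $p\geq1$, so the claim is reduced to the displayed H\"older-type inequality.

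The second and main step is this multilinear inequality for the harmonic-extension Gaussian family $(H_{j})_{j}$. This is a mean-zero Gaussian process indexed by the rectangles with uniformly bounded variance (by translation invariance and the explicit form of the massive Poisson kernel on a unit rectangle) and with exponential correlation decay of rate comparable to $m$ between $H_{j}$ and $H_{j'}$ as $|j-j'|\to\infty$. For $N=1$, GRS combines Fernique-type Gaussian tail bounds with an iterative application of H\"older exploiting the correlation decay to produce some $p=p(m)>1$ for which the inequality holds. In the vector-valued setting, the covariance kernel of $(H_{j})_{j}$ is diagonal in the component index $i\in[N]$, so these Gaussian estimates apply componentwise; the resulting $p$ only involves the scalar massive Green's function and the shape of $C_{0}$, hence is independent of $N$ and $L$.

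The main technical obstacle is carrying out the Markov decomposition rigorously on the distributional space $\mathcal{D}'(\Lambda_{L})^{N}$. The cleanest route, following \cite{guerra1975p}, is to work first at the level of the lattice approximation $\mu_{m,L,\epsilon}^{\otimes N}$ where the decomposition is finite-dimensional and transparent, and then pass to the limit $\epsilon\to 0$ using the same measurability approximation scheme already invoked elsewhere in the paper for the Checkerboard-based estimates on the Wick powers of $\|\Phi\|_{\R^{N}}^{4}$.
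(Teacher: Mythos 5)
Your approach is correct in outline but takes a genuinely different (and considerably longer) route than the paper. You propose to re-run the Guerra--Rosen--Simon argument from scratch on $\mu_{m,L}^{\otimes N}$: the domain Markov decomposition into Dirichlet pieces $\Phi^D_j$ and harmonic extensions $H_j$, the reduction via conditional Jensen to a multilinear inequality for the Gaussian family $(H_j)_j$, and then the Gaussian tail/hypercontractivity estimates that produce $p(m)$. You then assert that because the covariance is $(-\Delta+m^2)^{-1}\otimes I_N$, each of these ingredients is dimension-free. This is plausible, but that last assertion is precisely the claim that needs a proof, and it requires carefully re-examining every constant in the GRS argument (Poisson kernel bounds on rectangles, Fernique-type tails, the Hölder iteration controlling neighboring boundary strips) to confirm none of them degrades with $N$. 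You do not actually carry out that verification.

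The paper avoids this entirely by an induction on $N$ that uses the $N=1$ GRS result as a black box. Writing $\Phi=(\Phi_1,\hat\Phi)$ and using Fubini over the product measure, one first applies the Checkerboard inequality for $\mu_{m,L}^{\otimes(N-1)}$ to the inner integral in $\hat\Phi$; the resulting functions $\Phi_1\mapsto\big(\int|F_j|^p\,d\mu_{m,L}^{\otimes(N-1)}\big)^{1/p}$ are $\mathcal{F}_{C_j}$-measurable in the scalar variable $\Phi_1$, so a second application of the $N=1$ Checkerboard with the \emph{same} exponent $p$ finishes the step and yields $\prod_j\|F_j\|_{L^p(d\mu_{m,L}^{\otimes N})}$. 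No re-examination of the GRS internals is needed, and the exponent $p(m)$ is manifestly preserved at each inductive step. The trade-off: your route, if completed, would give a self-contained proof that might illuminate \emph{why} the constants are dimension-free (the block-diagonal covariance), whereas the paper's argument is shorter and provably correct given only the scalar statement. As written, your proposal has a genuine gap at the point where you claim ``the resulting $p$ only involves the scalar massive Green's function''; the inductive argument is the cleanest way to close it.
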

\begin{remark}\label{rem:orderOfp}
By Fr\"{o}hlich/Simon \cite{frohlich1977pure} Page 502, the exponent $p=p(m)$ may be taken to be $ \big ( \frac{2}{1-e^{-m}} \big )^{2}$.  For $m \in [0,1]$, we find that $p=O(m^{-2})$ by taking into account the lower bound $1-e^{-m} \geq (1-e^{-1})m$.
\end{remark}
\begin{proof}
The proof is by induction on $N$, as the base case follows from Theorem A.1 of \cite{guerra1976boundary}.  Assuming the desired property holds for $N-1$, we write $\Phi=(\Phi_{1},\hat{\Phi})$ and first apply the Checkerboard estimate  in the variables $\hat{\Phi}$, which holds by our inductive assumption, then use the Checkerboard estimate for $N=1$ in the variable $\Phi_{1}$ to obtain
\begin{align}
\int \bigg | \prod_{j}F_{j}(\Phi) \bigg |d \mu_{m,L}^{\otimes N}(\Phi)
&=\int \int \bigg | \prod_{j}F_{j}(\Phi_{1},\hat{\Phi})  \bigg |d \mu_{m,L}^{\otimes (N-1)}(\hat{\Phi} )d\mu(\Phi_{1}) \nonumber \\
& \leq \int \prod_{j} \bigg (\int |F_{j}(\Phi_{1},\hat{\Phi})|^{p}d \mu^{\otimes (N-1)}(\hat{\Phi}) \bigg )^{\frac{1}{p}}d \mu(\Phi_{1}) \nonumber \\
& \leq \prod_{j}\int \int \bigg | F_{j}(\Phi_{1},\hat{\Phi})  \bigg |^{p}d \mu_{m,L}^{\otimes (N-1)}(\hat{\Phi} )d\mu(\Phi_{1})\\
&=\prod_{j} \|F_{j}\|_{L^{p}(d \mu_{m,L}^{\otimes N} )},
\end{align}
which completes the proof.
\end{proof}
Finally, we include some further technical details on the application of the Chessboard estimate used to bound the entropy density.
\begin{lemma}\label{lemma:CheckerboardApprox}
The Checkerboard estimate \eqref{eq:EntCheckerboard} holds.
\end{lemma}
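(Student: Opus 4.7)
The issue is that the random variable $F_j \eqdef \frac{1}{N}\int_{C_j}:\|Z\|^4_{\R^N}:dx$ is defined as an $L^r(d\mu_{m,L}^{\otimes N})$-limit of its Fourier-regularized counterpart $\frac{1}{N}\int_{C_j}:\|Z_\epsilon\|^4_{\R^N}:_m dx$, which is \emph{not} $\mathcal{F}_{C_j}$-measurable because the smooth Fourier truncation \eqref{e69} has a spatial kernel supported on all of $\Lambda_L$. The plan is therefore to replace the Fourier regularization by a compactly supported spatial mollifier, apply Lemma~\ref{lemma:UniformCheckerboard} to the resulting $\mathcal{F}_{C_j}$-measurable approximants, and then pass to the limit.

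Concretely, fix a smooth, radial, non-negative mollifier $\chi_\delta \in C^{\infty}_{c}(B_\delta)$ with $\int \chi_\delta = 1$, and set $Z^\delta \eqdef Z * \chi_\delta$ and $C^\delta \eqdef \E^{\mu_{m,L}^{\otimes N}}(Z^\delta_1(0))^2$. Let $C_j^{(\delta)} \eqdef \{ x \in C_j : \text{dist}(x,\partial C_j) > \delta\}$ and define the localized approximants
\begin{equation}
F_j^\delta \eqdef \frac{1}{N}\int_{C_j^{(\delta)}} :\|Z^\delta(x)\|_{\R^N}^4:^{\mathrm{sp}}_m\, dx,
\end{equation}
where $:\,\cdot\,:^{\mathrm{sp}}_m$ denotes Wick renormalization relative to the variance $C^\delta$, defined analogously to \eqref{e123}. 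Since $\chi_\delta$ is supported in $B_\delta$, for every $x \in C_j^{(\delta)}$ the value $Z^\delta(x)$ depends only on $Z.\varphi$ for $\varphi$ supported in $C_j$, so that $F_j^\delta$ is $\mathcal{F}_{C_j}$-measurable. Applying Lemma~\ref{lemma:UniformCheckerboard} to the family $\bigl((F_j^\delta)^2\bigr)_{j \in I_J}$ (which is $\mathcal{F}_{C_j}$-measurable componentwise) yields
\begin{equation}
\E^{\mu_{m,L}^{\otimes N}}\prod_{j \in I_J}(F_j^\delta)^2 \leq \prod_{j \in I_J} \big\|(F_j^\delta)^2\big\|_{L^p(d \mu_{m,L}^{\otimes N})}.
\end{equation}

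To conclude, it suffices to show that $F_j^\delta \to F_j$ in $L^{2p}(d \mu_{m,L}^{\otimes N})$ as $\delta \to 0$, uniformly in $N$; then uniform integrability (together with hypercontractivity to upgrade $L^{2p}$ to arbitrary moments) allows us to pass to the limit on both sides. The convergence itself is proved by the same Malliavin calculus scheme used in Lemma~\ref{lem:stabAction}. Namely, on the second moment one obtains an identity of the form of \eqref{e67} with the approximate Green's function $G_{\epsilon,\kappa}$ replaced by spatial or mixed analogues $G^{\mathrm{sp}}_\delta(x-y) = \langle \chi_{\delta,x}, \chi_{\delta,y}\rangle_{H_m^{-1}(\Lambda_L)}$ and $G^{\mathrm{sp},\epsilon}_\delta$, and the convergence in $L^2$ reduces to the pointwise bound $|\widehat{\chi}_\delta(\xi) - 1| \lesssim (\delta |\xi|)^{2\sigma}$, $\sigma < 1$, combined with Young's convolution inequality exactly as in \eqref{e104}--\eqref{e105}; the contribution from the boundary strip $C_j \setminus C_j^{(\delta)}$, which has area $O(\delta)$, is bounded by Cauchy--Schwarz and \eqref{e25}. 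The extension from $L^2$ to $L^{2p}$ is hypercontractivity as in Step~\ref{SE2} of Lemma~\ref{lem:stabAction}.

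The main obstacle is ensuring that these stochastic estimates are quantitative and uniform in $N$, which is delicate because two different Wick renormalization constants appear ($C^\delta$ for the spatial scheme and $C^m_{\epsilon,L}$ for the Fourier one); the cancellations must be tracked via the product structure used in \eqref{e67} and \eqref{e45}, exactly as in Lemma~\ref{lem:stabAction}. Once these bounds are in place, the argument described above is routine.
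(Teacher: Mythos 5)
Your proposal takes essentially the same route as the paper's proof: both replace the non-local Fourier truncation by a compactly supported spatial mollifier, restrict the spatial integral to a slightly shrunk cube so the approximant becomes $\mathcal{F}_{C_j}$-measurable, apply Lemma~\ref{lemma:UniformCheckerboard} to those approximants, and pass to the limit as the mollification scale goes to zero. The paper organizes the limiting argument slightly differently — it keeps the full mollified integral over $C_j$, splits it as $I_j^\kappa = I_{j,0}^\kappa + I_{j,1}^\kappa$ with $I_{j,0}^\kappa$ the $\mathcal{F}_{C_j}$-measurable piece, expands the (finite, for fixed $L$) product, and shows every term carrying a factor of $I_{j,1}^\kappa$ vanishes by H\"{o}lder — whereas you apply the Checkerboard estimate directly to the localized approximants and then transfer the bound in the limit. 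These are formally equivalent. One point where you should be more careful: passing to the limit on both sides of the inequality requires controlling a product over $2^{2J}\sim L^2$ factors, so "uniform integrability plus hypercontractivity" is not quite an automatic step; you should make explicit that $L$ (and hence the number of factors) is fixed, that each $F_j^\delta - F_j$ lies in a fixed finite chaos so $L^2$ convergence upgrades to all moments, and use a telescoping/H\"{o}lder argument on the product differences — which is effectively what the paper's expansion does implicitly.
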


\begin{proof}
Let us denote by $\Phi^{\kappa} \eqdef \Phi * \eta^{\kappa}$ the re-scaled convolution with a standard mollifier $\tilde{\eta}$ which is supported (in physical space) on a unit ball.  Note that 
\begin{equation}
\frac{1}{N}\int_{C_{j}} : \|Z\|_{\R^{N}}^{4} :dx=\lim_{\kappa \to 0}\frac{1}{N}\int_{C_{j}} : \|Z^{\kappa}\|_{\R^{N}}^{4} :dx,
\end{equation}
where the limit is in $L^{p}(d\mu_{m,L})$ for any $p \in [1,\infty)$.  For each cube $C_{j}$, we can choose a subset $C_{j}^{\kappa} \subset C_{j}$ such that $I_{j,0}^{\kappa} \eqdef \frac{1}{N}\int_{C_{j}^{\kappa}} : \|Z^{\kappa}\|_{\R^{N}}^{4} :dx$ is $\mathcal{F}_{C_{j}}$ measurable and $|C_{j} \setminus C_{j}^{\kappa}| \to 0$ as $\kappa \to 0$.  Defining $I_{j,1}^{\kappa}=I_{j}^{\kappa}-I_{j,0}^{\kappa}$, we find that
\begin{align}
\E^{\mu_{m,L}^{\otimes N} } \prod_{j \in I_{J}} \bigg | \frac{1}{N}\int_{C_{j}} : \|Z^{\kappa}\|_{\R^{N}}^{4} :dx \bigg |^{2} = \E^{\mu_{m,L}^{\otimes N} } \prod_{j \in I_{J}}(|I_{j,0}^{\kappa} |^{2}+|I_{j,1}^{\kappa} |^{2}+2|I_{j,0}^{\kappa}||I_{j,1}^{\kappa}| ) \nonumber.
\end{align}
The above expectation can be written as a sum of expectations of products.  The only contribution that survives in the $\kappa \to 0$ limit is 
\begin{equation}
\E^{\mu_{m,L}^{\otimes N} } \prod_{j \in I_{J}}|I_{j,0}^{\kappa} |^{2} \leq \prod_{j \in I_{J}}  \bigg \| \bigg (\frac{1}{N}\int_{C_{j,\kappa}} : \|Z^{\kappa}\|_{\R^{N}}^{4} :dx \bigg )^{2} \bigg \|_{L^{p}(d \mu_{m,L}^{\otimes N}) } \nonumber,
\end{equation}
and the RHS converges as $\kappa \to 0$ to the RHS of \eqref{eq:EntCheckerboard}.  All remaining contributions tend to zero by H\"{o}lder's inequality since they contain at least one factor of $I_{j,1}^{\kappa}$, which goes to zero in $L^{p}(d \mu_{m,L}^{\otimes N} )$ for every $p \in [1,\infty)$.
\end{proof}

% \setcounter{section}{0}
% \renewcommand{\thesection}{}

%%%%%%%%%%%%%%%%%%%%%%%%%%%%%%%%%%%%%%%%%%%%%%%%%%%%%%%%%%%%%%%%%%%%%%%%%%%%%%%%%%%%%%%%%%%%%%%%%%%%%%%%%%%%%%%%%%%%%%%%%%%%%%%%%%%%%%%%%%%%%%%%%%%%%%%%%%%%%%%%%%%%%%%%%%%%%%%%%%%%%%%%%%%%%%%%%%%%%%%%%%%%%%%%%%%%%%%%%%%%%%%%%%%%%%%%%%%%%%%%%%%%%%%%%%%%%%%%%%%%%%%%%%%%%%%%%%%%%%%%%%%%%%%%%%%%%%%%%%%%%%%%%%%%%%%%%%%%%%%%%%%%%%%%%%%%%%%%%%

\begingroup
\renewcommand{\appendixname}{Acknowledgements}%
\phantomsection
\section*{}
The authors thank Rishabh Gvalani, Hao Shen, Zhenfu Wang, Rongchan Zhu, and Xiangchan Zhu for discussions. The second author also thanks Peter Friz for bringing the reference \cite{riedel2017transportation} to his attention. The research of MGD was partially supported by NSF-DMS-2205937.  S.S. is grateful for the financial support from the National Key R\&D Program of China (No. 2022YFA1006300).
\endgroup

%\section*{Data Availability Statement}
%Data sharing is not applicable to this article as no datasets were generated or analysed during the current study.
%%%%%%%%%%%%%%%%%%%%%%%%%%%%%%%%%%%%%%%%%%%%%%%%%%%%%%%%%%%%%%%%%%%%%%%%%%%%%%%%%%%%%%%%%%%%%%%%%%%%%%%%%%%%%%%%%%%%%%%%%%%%%%%%%%%%%%%%%%%%%%%%%%%%%%%%%%%%%%%%%%%%%%%%%%%%%%%%%%%%%%%%%%%%%%%%%%%%%%%%%%%%%%%%%%%%%%%%%%%%%%%%%%%%%%%%%%%%%%%%%%%%%%%%%%%%%%%%%%%%%%%%%%%%%%%%%%%%%%%%%%%%%%%%%%%%%%%%%%%%%%%%%%%%%%%%%%%%%%%%%%%%%%%%%%%%%%%%%%%%%%%%%%%%%%%%%%%%%%%%%%%%%%%%%%%%%%%%%%%%%%%%%%%%%%

% \appendix

\bibliographystyle{imsart-number}

\bibliography{ref}

\end{document}